\DeclareMathOperator*{\sgn}{sgn}
\newcommand{\mut}{\widetilde{\mu}}
\newcommand{\at}{\widetilde{a}}
\newcommand{\bt}{\widetilde{b}}
\newcommand{\ds}{\displaystyle}
\newcommand{\nexto}{\kern -0.54em}
\newcommand{\dR}{\mathbb{R}}
\newcommand{\dN}{\mathbb{N}}
\newcommand{\proofbox}{\hspace{\fill}{$\Box$}}
\newtheorem{lemma}{Lemma}
\newtheorem{theorem}{Theorem}
\newtheorem{corollary}{Corollary}
\newtheorem{definition}{Definition}
\newtheorem{proposition}{Proposition}
\newtheorem{remark}{Remark}
\newtheorem{algorithm}{Algorithm}
\newenvironment{proof}{Proof.}{\proofbox}
\begin{document}

\author{Authors}

\author{Orhan Ar{\i}kan\footnote{Electrical and Electronics
    Engineering Department, Bilkent University, Bilkent, 06800 Ankara,
    Turkey.  E-mail: oarikan@ee.bilkent.edu.tr\,.}
\and
Regina S. Burachik\footnote{School of Information Technology and
  Mathematical Sciences, University of South Australia, Mawson Lakes,
  S.A. 5095, Australia. 
  E-mail: regina.burachik@unisa.edu.au\,.}
\and
C. Yal{\c c}{\i}n Kaya\footnote{School of Information Technology and
  Mathematical Sciences, University of South Australia, Mawson Lakes,
  S.A. 5095, Australia.
  E-mail: yalcin.kaya@unisa.edu.au\,.}}

\title{\vspace{-10mm}\bf Steklov Regularization and Trajectory Methods for Univariate Global Optimization}
\date{\today}
\maketitle

\thispagestyle{empty}

\begin{abstract} 
{\sf We introduce a new regularization technique, using what we refer to as the Steklov regularization function, and apply this technique to devise an algorithm that computes a global minimizer of univariate coercive functions.  First, we show that the Steklov regularization convexifies a given univariate coercive function.  Then, by using the regularization parameter as the independent variable, a trajectory is constructed on the surface generated by the Steklov function.  For monic quartic polynomials, we prove that this trajectory does generate a global minimizer.  In the process, we derive some properties of quartic polynomials.  Comparisons are made with a previous approach which uses a quadratic regularization function. We carry out numerical experiments to illustrate the working of the new method on polynomials of various degree as well as a non-polynomial function.}
\end{abstract}

\begin{verse} {\em Key words}\/: {\sf Global optimization, mean filter, Steklov smoothing, Steklov regularization, scale-shift invariance, trajectory methods.}
\end{verse}

\pagestyle{myheadings}
\markboth{}{\sf\scriptsize Steklov Regularization for Univariate Global Optimization \ \ by O. Ar{\i}kan, R. S. Burachik and C. Y. Kaya}

\section{Introduction}

Mean filter is a digital filtering technique in signal processing, which is used to remove noise.  The technique can also be viewed as a smoothing procedure.  In digital imaging, for example, this filtering technique is performed by replacing each pixel value by the mean value of its neighbours and itself in a ``window'' -- see~\cite{ZhaXio2009} and the references therein.  The expected outcome is the removal of noise in the image and the smoothening of the image.  The mean filter idea was originally proposed and has so far been used for the processing of discrete data.

In the present paper, we propose and analyse a similar idea in the setting of continuous optimization, involving a coercive univariate function instead of discrete data.  When the averaging process described above is employed for a univariate function $f(x)$ over an interval (corresponding to a window) of variable size centred at $x$, i.e., $[x-t,x+t]$, with $t>0$, one obtains the well-known Steklov smoothing function~\cite{Chen2012, GarVic2013, Gupal1977, ErmNorWet1995}, expressed in terms of the function $f$ and the size of the interval, denoted here by $\mu(x,t)$.  The Steklov smoothing function is typically used in getting an approximate solution to the problem of minimizing a nonsmooth $f$:  The smooth function $\mu(x,t)$ is minimized over an interval, or window, with small $t$, so that the solution of the smoothed problem is a close enough approximation to the solution of the original problem.  

Although the properties of $\mu$ have very well been explored in the literature for small $t$, it has not yet been studied for large $t$. This is the point where our study steps in.  In the present paper, first we show that for large enough $t$ and certain coercive $f$, $\mu(\cdot,t)$ is strictly convex -- see Theorem~\ref{convexity}.  In this sense,  $\mu$ {\em regularizes} the function $f$ for large $t$ by convexifying (as well as smoothening) it, and that is the reason why we call it the {\em Steklov regularization function}.  We note that, if $\mu(\cdot,t_0)$ is strictly convex for some $t_0>0$, then $\mu(\cdot,t_0)$ has a unique minimizer.  The main aim of the current paper is to propose a method, namely Algorithm~\ref{algo1}, based on constructing and following a trajectory between the unique minimizer of $\mu(\cdot,t_0)$ and a global minimizer of $f(x)$.  The trajectory here is the solution of an ordinary differential equation (ODE) obtained by using $\mu(x,t)$.

Univariate global optimization has long been an active area of research -- see \cite{HorTuy1996, LerSer2013, Scholz2012} and the references therein.  %
Most multidimensional iterative methods involve line searches at a given search direction, and these uni-dimensional searches are equivalent to the global minimization of a univariate function.  Therefore, finding efficiently a global minimizer in such a line search has importance on its own, and can be crucial for the success of such iterative high-dimensional techniques. Thus the relevance of developing new, efficient univariate techniques. Moreover, the result of such a line search can be useful as a starting guess for the global minimizer of the original higher dimensional problem. Although the present paper focuses on the univariate case, an extension of our approach to the multi-variable case is under investigation.

Trajectory based methods are not new to optimization.  The trajectories (to follow) in these methods are typically solutions of ODEs incorporating the gradient of $f(x)$.  Convergence analyses for these types of methods have so far been given only for local minima -- see, for example, \cite{AttChbPeypRed2018, BotCse2018} and the references therein.  Trajectory based methods have been proposed also for global optimization, albeit without a convergence proof, to the best knowledge of the authors -- see, for example, \cite{SnyKok2009}.

We note one particular trajectory based technique for global optimization, the {\em backward differential flow} method, which was proposed by Zhu et al.\ in~\cite{ZhuZhaLiu2014}, where the trajectories are solutions of an ODE that emanates from the (classical) quadratic regularization function rather than the Steklov regularization function.  We have recently illustrated that the backward differential flow method, given as Algorithm~\ref{algo3} in the current paper, may not yield a global minimizer, even in the case when the function is a quartic polynomial -- see~\cite{AriBurKay2015}.  

We provide a convergence proof of our approach for the case of quartic polynomials (see Algorithm~\ref{algo2} and Theorem~\ref{well-defined}). Our numerical experiments indicate that our method can be viewed as a better alternative to that given by Zhu et al.~\cite{ZhuZhaLiu2014}.  Indeed, our method converges to a global minimum in most of the (randomly generated) cases of even-higher-degree monic polynomials. On the other hand, the method by Zhu et al.\ fails to converge in the great majority of the cases -- see Table~\ref{failure_rates}.

In addition to the convexification and convergence results in Theorems~\ref{convexity} and \ref{well-defined}, respectively, we provide auxiliary results, which are interesting in their own right.  For example, we prove in Lemma~\ref{shift_lemma} that, if Algorithm~\ref{algo1} can generate the global minimizer of a given function $f(x)$, then it can also generate the global minimizer of $f(\alpha\,x - a)$, where $a\in \dR$ and $\alpha>0$ are fixed.  We refer to this property as the {\em scale-shift invariance} property.  We note that, while Algorithm~\ref{algo1} (and thus Algorithm~\ref{algo2}) is scale-shift invariant, Algorithm~\ref{algo3} is not.  It is well-known that scale changes and translations can be used to simplify the expression of a function. For example, the third degree term of a quartic polynomial can be made to vanish after a simple horizontal shift, which transforms the polynomial into the so-called {\em depressed} form.

We uncover certain properties of quartic polynomials, which are independent of the method we propose.  Lemma~\ref{curvature} states that, if a quartic polynomial has two local minimizers, then its curvature at the global minimizer is greater.  Moreover, its global minimizer is farther from the origin.  Lemma~\ref{global_sign}, on the other hand, tells us at least how far from the origin the global minimizer will be located and what its sign is going to be.  Lemma~\ref{f: quasi-convex} presents a simple condition under which a monic depressed quartic polynomial is quasi-convex.  Lemma~\ref{t0_x0} asserts the value $t_0$ that convexifies $\mu(\cdot,t_0)$ and states the minimizer $x_0$ of $\mu(\cdot,t_0)$, which are conveniently used in Algorithm~\ref{algo2}.  Proposition~\ref{mu:quasi-convex} provides a condition on $t$ for quasi-convexity of $\mu(\cdot,t_0)$.  Lemmas~\ref{lem:solvability}--\ref{lem:3} provide some properties of the trajectories run in Algorithm~\ref{algo2} which in turn facilitate the proof of Theorem~\ref{well-defined}.

The paper is organized as follows.  In Section~2, we introduce the Steklov regularization and prove certain properties, including convexification.  In Section~3, we describe Algorithm~\ref{algo1} and prove the scale-shift invariance property.  In Section~4, we derive some properties of quartic polynomials, provide Algorithm~\ref{algo2} and prove its convergence.  In Section~5, we describe Algorithm~\ref{algo3}, which uses the quadratic regularization.  In Section~6, we carry out extensive numerical experiments using Algorithms~\ref{algo1} and \ref{algo3} for polynomials of various degrees, including a non-polynomial example, and make comparisons.


\section{Steklov Regularization }

In an analogous way to the original (discrete) mean filter technique~\cite{ZhaXio2009}, first choose a ``window'' with centre $x$.  In the case when $x$ is a scalar, this window is just a finite interval.  Then compute the {\em mean value} of a continuous function $f:\dR\to\dR$ over the window and assign this value as the value of an associated function at $x$.  Furthermore, pass/shift the window across the whole domain of $f$, assigning values to the mean function at every $x$ in the domain of~$f$.

The window, or the interval, can typically be chosen to be centred at $x$, as $[x-t,x+t]$, where $t$ is a fixed positive real number defining the window size.  Therefore, we can regard the associated function as a function of not only $x$ but also $t$.  

The function we have just motivated with mean filter turns out to be already in use in the nonsmooth optimization literature, in obtaining smooth approximations of nondifferentiable objective functions, via a convolution integral, for $t$ small enough.  A well-known class of {\em mollifiers} in the convolution integral is referred to as the Steklov mollifiers \cite{GarVic2013}.  A use of these mollifiers in the convolution integral in turn gives rise to the so-called {\em Steklov smoothing function}, definition and properties of which can be found in \cite{Chen2012, GarVic2013, Gupal1977, ErmNorWet1995}.  

We note that the function we have motivated by means of mean filter is nothing but the Steklov smoothing function.  Since our concern will be to {\em convexify} a given function for large enough $t$ (rather than making it smooth for small $t$), we refer to the resulting function as the {\em Steklov regularization function}, or simply the {\em Steklov function}.

\begin{definition} \rm
The {\em Steklov function} associated with a continuous function $f$ is denoted by $\mu:\dR\times (0,\infty)\to \dR$ and defined as
\begin{equation}  \label{mf-func}
\mu(x,t) := \frac{1}{2t}\,\int_{x-t}^{x+t} f(\tau)\,d\tau\,.
\end{equation}
We also refer to $\mu(\cdot,\cdot)$ as the {\em Steklov regularization of} $f$.
\endproof
\end{definition}

\begin{remark} \rm
Since the function $f$ is continuous, $\mu:\dR\times [0,\infty)\to\dR$ is well defined and  differentiable on $\dR\times (0,\infty)$.
\endproof
\end{remark}

We collect in the next lemma some useful properties of $\mu$. 


\begin{lemma}\label{L-prop-mu}
Given a continuous function $f:\dR\to\dR$, let $\mu:\dR\times (0,\infty)\to \dR$ be as in \eqref{mf-func}. The following equalities hold for $\mu$. 
\begin{itemize}
\item[(i)]\begin{equation}  \label{mux-1}
\mu_x(x,t) = \frac{1}{2t}\,(f(x+t) - f(x-t)),
\end{equation}
where $\mu_x$ stands for $\partial\mu/\partial x$.
\item[(ii)] \begin{equation}  \label{muxx}
\mu_{xx}(x,t) = \frac{1}{2t}\,(f'(x+t) - f'(x-t))\,,
\end{equation}
where $\mu_{xx}$ stands for $\partial^2\mu/\partial x^2$.
\item[(iii)]  \begin{equation} \label{mutx}
\mu_{tx}(x,t) = \frac{1}{t}\,\left[\frac{1}{2}\,(f'(x+t) + f'(x-t)) - \mu_x(x,t)\right]\,, 
\end{equation} 
where $\mu_{tx}$ stands for $\partial^2\mu/\partial t\,\partial x$.
\end{itemize}
\end{lemma}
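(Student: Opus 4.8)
The plan is to prove the three identities by direct differentiation of the integral representation \eqref{mf-func}, treating each as a routine application of the Fundamental Theorem of Calculus together with the quotient/product rule, and then simplifying. Since $f$ is continuous, $\mu$ is well defined and the required derivatives exist on $\dR\times(0,\infty)$, so differentiation under the integral sign and the Leibniz rule for variable limits are justified without further comment.

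For part (i), I would write $\mu(x,t)=\frac{1}{2t}\,F(x,t)$ where $F(x,t):=\int_{x-t}^{x+t}f(\tau)\,d\tau$, and differentiate with respect to $x$. The factor $1/(2t)$ is constant in $x$, so $\mu_x=\frac{1}{2t}\,F_x$, and by the Leibniz rule $F_x(x,t)=f(x+t)\cdot 1 - f(x-t)\cdot 1=f(x+t)-f(x-t)$. This yields \eqref{mux-1} immediately. For part (ii), I would differentiate \eqref{mux-1} once more in $x$; since $f$ is (assumed) differentiable here, $\partial_x f(x\pm t)=f'(x\pm t)$, and again the $1/(2t)$ prefactor passes through untouched, giving \eqref{muxx}.

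Part (iii) is the only one requiring a little care, and is where I expect the main (though still modest) obstacle to lie, because it mixes the $x$- and $t$-derivatives and the $1/(2t)$ prefactor now depends on $t$. I would start from $\mu_x(x,t)=\frac{1}{2t}\,(f(x+t)-f(x-t))$ from part (i) and differentiate with respect to $t$ using the product rule. The first term contributes $-\frac{1}{2t^2}(f(x+t)-f(x-t))$, and the second contributes $\frac{1}{2t}\,\partial_t\bigl(f(x+t)-f(x-t)\bigr)=\frac{1}{2t}\,(f'(x+t)+f'(x-t))$, where the sign on the second chain-rule term flips to a plus because $\partial_t f(x-t)=-f'(x-t)$. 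Collecting these and recognising $-\frac{1}{2t^2}(f(x+t)-f(x-t))=-\frac{1}{t}\,\mu_x(x,t)$ via part (i) gives exactly the bracketed form in \eqref{mutx}.

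The only genuine subtlety is bookkeeping the signs in the chain rule for the $t$-derivatives of $f(x+t)$ and $f(x-t)$, and correctly identifying the leftover $-\frac{1}{2t^2}(f(x+t)-f(x-t))$ term as $-\frac{1}{t}\mu_x$ so as to match the stated right-hand side. One could alternatively prove (iii) by computing $\mu_{tx}$ as $\partial_x \mu_t$ and invoking equality of mixed partials (Clairaut's theorem, applicable since $f\in C^1$ makes $\mu\in C^2$), but the direct route from part (i) is shorter and avoids first deriving a formula for $\mu_t$.
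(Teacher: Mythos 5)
Your proposal is correct and follows essentially the same route as the paper: part (i) via the Fundamental Theorem of Calculus (Leibniz rule), and parts (ii) and (iii) by differentiating $\mu_x$ with respect to $x$ and $t$ respectively, with the $-\frac{1}{2t^2}(f(x+t)-f(x-t))$ term correctly recognised as $-\frac{1}{t}\mu_x(x,t)$. The paper merely states this in one sentence; your write-up fills in the same computation.
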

\begin{proof}
Part (i) follows directly from the Fundamental Theorem of Calculus, and the remaining parts are obtained by differentiating $\mu_x$ with respect to $x$ and $t$, respectively.
\end{proof}

The following theorem states general assumptions under which the Steklov function $\mu$
convexifies a coercive function $f$, and hence we regard the effect of $\mu$ as a regularization.

\begin{theorem} [Convexification] \label{convexity} Suppose that $f:\dR\to \dR$ is a continuously differentiable function such that there exist two real numbers $a$ and $b$, with $a < b$, for which the following conditions hold.
\begin{itemize}
\item[(a)] $f'(x)<0$ for all $x\le a$ and $f'(x) >0$  for all $x\ge b$.
\item[(b)] $f'$ is strictly increasing and unbounded below on $(-\infty,a]$.
\item[(c)] $f'$ is strictly increasing and unbounded above on $[b,\infty)$.
\end{itemize}
Then there exists $t_0 > 0$ such that $\mu(\cdot,t)$  is strictly convex for all $t \ge t_0$.
\end{theorem}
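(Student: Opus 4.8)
The plan is to reduce the convexity claim to a single pointwise inequality for $f'$ and then establish that inequality by a case analysis driven by conditions (a)--(c). By Lemma~\ref{L-prop-mu}(ii) we have $\mu_{xx}(x,t)=\frac{1}{2t}\,(f'(x+t)-f'(x-t))$ for each fixed $t>0$, and since $f$ is $C^1$ this is continuous in $x$. A differentiable univariate function whose first derivative is strictly increasing is strictly convex, and $\mu_{xx}(\cdot,t)>0$ everywhere makes $\mu_x(\cdot,t)$ strictly increasing; hence it suffices to produce $t_0>0$ such that, for every $t\ge t_0$,
\[
f'(x+t) > f'(x-t)\quad\text{for all } x\in\dR.
\]
Writing $p=x-t$ and $q=x+t$, the task becomes to show that once the separation $q-p=2t$ is large enough, the right endpoint always carries the strictly larger derivative value.

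First I would extract the two ingredients the hypotheses provide. On the compact interval $[a,b]$, continuity of $f'$ yields finite bounds $m_0:=\min_{[a,b]}f'$ and $M_0:=\max_{[a,b]}f'$, and condition (a) gives $m_0\le f'(a)<0$ and $M_0\ge f'(b)>0$. On the two tails, (b) and (c) say $f'$ is strictly increasing with $f'\to-\infty$ on $(-\infty,a]$ and $f'\to+\infty$ on $[b,\infty)$; so by the intermediate value theorem there exist $A_1\le a$ and $B_1\ge b$ with $f'(A_1)=m_0$ and $f'(B_1)=M_0$, and strict monotonicity then gives $f'(\xi)<m_0$ for $\xi<A_1$ and $f'(\xi)>M_0$ for $\xi>B_1$. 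I would then define $t_0:=\max\{(B_1-a)/2,\ (b-A_1)/2\}$, which is positive and forces $2t\ge b-a$, so that $p$ and $q$ can never both lie in $[a,b]$.

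The core is a case analysis on the position of $p=x-t$. If $p\ge b$ then $q>p\ge b$ and the inequality is immediate from strict monotonicity of $f'$ on $[b,\infty)$; symmetrically, if $q\le a$ then $p<q\le a$ and it follows from monotonicity on $(-\infty,a]$. If $p\le a$ and $q\ge b$, condition (a) gives $f'(p)<0<f'(q)$ directly. The two remaining configurations are the genuine difficulty, since there exactly one endpoint sits in the middle band $[a,b]$, where \emph{no} monotonicity is assumed and the sign argument is unavailable. These are dispatched precisely by the choice of $t_0$: if $p\in(a,b)$ then $q>a+2t\ge B_1$, whence $f'(q)>M_0\ge f'(p)$; and if $q\in(a,b)$ then $p<b-2t\le A_1$, whence $f'(p)<m_0\le f'(q)$. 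In every case the inequality is strict, giving $\mu_{xx}(\cdot,t)>0$ identically and thus strict convexity of $\mu(\cdot,t)$ for all $t\ge t_0$.

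I expect the bookkeeping in the two ``mixed'' cases to be the delicate part. One must see that the correct threshold separations are $B_1-a$ and $b-A_1$ (rather than the naive $b-a$), that these are exactly what drive the companion endpoint far enough into a tail for unboundedness to overcome the bounds $m_0,M_0$, and that the strict inequalities survive at the boundary value $t=t_0$ --- which they do precisely because $p>a$ (respectively $q<b$) is strict in those cases. The bounded-region and straddling cases should then be routine once the compactness bounds and the unboundedness in (b)--(c) are in hand.
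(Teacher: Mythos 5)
Your proof is correct and follows essentially the same route as the paper's: reduce strict convexity to the pointwise inequality $f'(x+t)>f'(x-t)$ via Lemma~\ref{L-prop-mu}(ii), bound $f'$ on $[a,b]$ by its minimum and maximum, use the unboundedness in (b)--(c) to locate tail points beyond which $f'$ escapes those bounds, and finish by a case analysis on where the window endpoints fall. Your execution is in fact tidier than the paper's --- by classifying $p=x-t$ and $q=x+t$ directly against $[a,b]$ for arbitrary $t\ge t_0$ you avoid the paper's two-stage argument (first at $t_0$, then sub-cases for $t\ge t_0$), and your threshold $t_0=\max\{(B_1-a)/2,(b-A_1)/2\}$ is sharper than the paper's $t_0\ge\tilde b-\tilde a$ --- but the underlying idea is the same.
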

\begin{proof}
From part (a) and the fact that $f'$ is continuous on $[a,b]$, there exist real numbers $\alpha < 0$ and $\beta > 0$ such that
\[
\alpha:=\min_{x\in [a,b]} f'(x)\le f'(a)<0<f'(b)\le \max_{x\in [a,b]} f'(x) =: \beta\,.
\] 
By parts~(b) and (c), there exist $\at\le a$ and  $\bt\ge b$ such that $f'(x) < \alpha$  for all $x\le\at$, and  $f'(x) > \beta$ for all $x\ge\bt$.  Let $t_0 \ge \bt - \at>0$.  We will show that for every $t\ge t_0$ and every $x\in \dR$, we have
\begin{equation}  \label{CC}
f'(x+t) - f'(x-t) > 0.
\end{equation}
By \eqref{muxx}, this amounts to showing convexity of $\mu(\cdot,t)$ all $t \ge t_0$. Only the following cases are possible for the pair $x-t_0,\,x+t_0$.  
\begin{itemize}
\item[(i)] $x - t_0, x + t_0 \in (-\infty,\at]$.
\item[(ii)] $x - t_0, x + t_0 \in [\bt,\infty)$.
\item[(iii)] $x - t_0 \in (-\infty,\at]$ and $x + t_0 \in (\at,\bt)$.
\item[(iv)] $x - t_0 \in (\at,\bt)$ and $x + t_0 \in [\bt,\infty)$.
\item[(v)] $x - t_0 \in (-\infty,\at]$ and $x + t_0 \in [\bt,\infty)$.
\end{itemize}
Note that the case $x - t_0,\,x+t_0 \in (\at,\bt)$ is not possible by the choice of $t_0$. Indeed, if $x - t_0,\,x+t_0 \in (\at,\bt)$ we can write
\[
t_0<x-\at \hbox{\ \ \ and\ \ \ } t_0<\bt-x\,,
\]
so $t_0 < (\bt-\at)/2 < \bt-\at$, contradicting the choice of $t_0$. We prove \eqref{CC} by considering all the possible cases (i)--(v).  
\begin{itemize}

\item[(i)] By part (b) and the fact that $x+t_0 > x-t_0$, we have that $f'(x+t_0) - f'(x-t_0) > 0$. To complete the proof of \eqref{CC}, fix now $t>t_0$. We have the following sub-cases:
\begin{center}
(i$_1$)~$x+t \in (\at,a)$\,,\qquad (i$_2$)~$x+t \in [a,b]$\,,\qquad (i$_3$)~$x+t \in (b,+\infty)$\,.  
\end{center}
In case (i$_1$) we use part (b) and the fact that  $x-t_0,\,x-t, x+t_0,\,x+t\in (-\infty,a]$ to write
\begin{eqnarray*}
 f'(x+t_0) & < & f'(x+t)\,,    \\
  f'(x-t_0)  &  > & f'(x-t)\,,
\end{eqnarray*}
so $0< f'(x+t_0) - f'(x-t_0)< f'(x+t) - f'(x-t)$, as desired. In case  (i$_2$), we use the definition of $\alpha$ to write $f'(x+t)\ge \alpha$. Since $x + t_0 \in (-\infty,\at]$ we also have that $f'(x+t_0)< \alpha$. Using part~(b) and the fact that $x-t<x-t_0\le \at$, we have
\[
0< f'(x+t_0) - f'(x-t_0) < \alpha- f'(x-t) \le f'(x+t)- f'(x-t)\,,
\]
as desired. In sub-case (i$_3$), we note that $f'(x+t)>\alpha$. Indeed, since $x+t>b$ we use part~(c) to write $\alpha\le f'(b)<f'(x+t)$. Altogether,
\[
0< f'(x+t_0) - f'(x-t_0) < \alpha- f'(x-t_0) < f'(x+t)- f'(x-t)\,,
\]
where we also used (b) in the third inequality. This completes the proof for case (i). Due to symmetry, the proof for case (ii) is done in exactly the same way as for case (i), {\em mutatis mutandis}. We therefore omit the proof for case (ii).

\item[(iii)]  As in (i), we consider three subcases: 
\begin{center}
(iii$_1$)~$x+t_0\in (\at,a)$\,,\qquad (iii$_2$) $x+t_0 \in [a,b]$\,,\qquad (iii$_3$) $x+t_0 \in (b,+\infty)$\,.
\end{center}
Case (iii$_1$) implies that $x-t_0, x+t_0\in (-\infty,a]$ and by part~(b) 
\begin{equation}\label{C1}
f'(x+t_0)-f'(x-t_0) > 0\,.
\end{equation}
Case (iii$_2$) gives $x+t_0\in [a,b]$ and $x-t_0\in (-\infty,\at]$. So, again we have \eqref{C1}. Indeed,
\[
f'(x+t_0)-f'(x-t_0) > \alpha -\alpha=0\,.
\]
In case (iii$_3$) we have  $x+t_0\in (b,\infty)$ and $x-t_0\in (-\infty,\at]$. So by parts (b) and (c) we have that $\alpha\le f'(b)<f'(x+t_0)$ and  $f'(x-t_0) <\alpha$. As in case  (iii$_2$) we obtain \eqref{C1}. \\
To complete the proof for case~(iii), fix $t\ge t_0$. As in case (i) we need to consider three sub-cases: 
\begin{center}
(iii$_4$)~$x+t\in (\at,a)$\,,\qquad (iii$_5$)~$x+t \in [a,b]$\,,\qquad (iii$_6$)~$x+t \in (b,+\infty)$\,.
\end{center}
All three sub-cases are resolved exactly as in cases (iii$_1$), (iii$_2$) and (iii$_3$), respectively, with $t_0$ replaced by $t$. This completes the proof for case (iii).
 
\item[(iv)]  Again, we consider three sub-cases: 
\begin{center}
(iv$_1$)~$x-t_0\in (\at,a)$\,,\qquad (iv$_2$)~$x-t_0 \in [a,b]$\,,\qquad (iv$_3$)~$x-t_0 \in (b,\bt)$\,.
\end{center}
In case (iv$_1$) we have $\at<x-t_0<a$ so by part~(b) $f'(x-t_0)<f'(a)\le \beta$.  Also in case~(iv$_2$) we have $f'(x-t_0)\le \beta$. In both cases, we can write
\[
f'(x+t_0)-f'(x-t_0) >\beta -\beta=0\,,
\]
where we also used the fact that $f'(x+t_0)>\beta$. In case (iv$_3$), $x-t_0,\, x+t_0\in (b,\infty)$ and we use directly part~(c) to conclude that  $f'(x+t_0)-f'(x-t_0) >0$.  \\
To complete the proof for case~(iv), fix $t\ge t_0$. We always have that $x+t\in [\bt,\infty)$ so $f'(x+t)>\beta$. We consider again the following sub-cases: 
\begin{center}
(iv$_4$)~$x-t\in (\at,a)$\,,\qquad (iv$_5$)~$x-t \in [a,b]$\,,\qquad (iv$_6$)~$x-t_0 \in (b,\bt)$\,.
\end{center}
As in case (iii), all three sub-cases are resolved exactly as cases (iv$_1$), (iv$_2$) and (iv$_3$),  respectively, with $t_0$ replaced by $t$. This completes the proof for case (iv).

\item[(v)]  Use parts~(b) and (c) to write
  \[
  0<\beta -\alpha < f'(x+t_0) - f'(x-t_0)<f'(x+t) - f'(x-t),
  \]
  where we used the definition of $\at$ and $\bt$ in the second inequality and parts~(b) and (c) in the third.  This completes the proof for case~(v).
\end{itemize}
The proof of the theorem is complete.
\end{proof}

\begin{remark}\label{R1} \rm
It is easy to check that, in Theorem~\ref{convexity}, we can take $a:=-R$ and $b:=R$ for $R:=\max\{|a|, |b|\}$. Note that $R>0$ because $a<b$.
\end{remark}

\begin{remark} \rm
Monic polynomials of even degree are an important special case of functions which can be convexified by $\mu$.
\end{remark}

Graphical depictions of $\mu(x,t)$ for typical monic quartic polynomials and how convexification happens in each example case can be observed in Figure~\ref{curve_types} on page~\pageref{curve_types} .

We focus our attention on functions which are {\em coercive}, in the sense of~\cite[Definition~3.25]{RocWet2004}. In our framework, this concept is stated as follows.

\begin{definition}  \label{def:coercive} \rm
 Let  $f:\dR\to\dR$ be a function which is bounded below on bounded sets. We say that $f$ is {\em
   coercive} if
\begin{equation}  \label{eq:10}
   \liminf_{|x|\to \infty}\displaystyle \frac{f(x)}{|x|}=\infty\,.
\end{equation}
\endproof
\end{definition}

Coercive functions might be non-differentiable, and hence in general they may not verify the assumptions of Theorem~\ref{convexity}. The following result shows that a function verifying the assumptions of Theorem~\ref{convexity} is coercive in the sense of Definition~\ref{def:coercive}.

\begin{proposition}[Coercivity]  \label{coercive}
Let $f$ be as in Theorem~\ref{convexity}. Then $f$ is coercive in the sense of Definition~\ref{def:coercive}.
\end{proposition}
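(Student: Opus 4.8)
The plan is to establish coercivity separately as $x\to+\infty$ and as $x\to-\infty$, after first disposing of the boundedness-below requirement. Since $f$ is continuously differentiable it is in particular continuous, so on any bounded set $B\subset\dR$ it is bounded below: the closure $\overline{B}$ is compact and $f$ attains a minimum there. This settles the first clause of Definition~\ref{def:coercive}, and it remains to prove that $\liminf_{|x|\to\infty} f(x)/|x|=\infty$.

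First I would treat the case $x\to+\infty$. The key idea is to convert the hypothesis that $f'$ is unbounded above on $[b,\infty)$ (part~(c)) into a lower bound for $f$ by a line of arbitrarily large slope. Fix an arbitrary $M>0$. By part~(c) there is some $x_M\ge b$ with $f'(\tau)>M$ for all $\tau\ge x_M$; here I also use that $f'$ is strictly increasing, so once it exceeds $M$ it stays above $M$. Then for $x\ge x_M$ the Fundamental Theorem of Calculus gives
\[
f(x)=f(x_M)+\int_{x_M}^{x} f'(\tau)\,d\tau\ge f(x_M)+M\,(x-x_M).
\]
Dividing by $x>0$ and letting $x\to\infty$, the constant term vanishes and the slope term tends to $M$, so $\liminf_{x\to+\infty} f(x)/x\ge M$. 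Since $M>0$ is arbitrary, this liminf is $+\infty$.

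The case $x\to-\infty$ is handled symmetrically using part~(b), the only extra care being the sign bookkeeping coming from $|x|=-x$. Given $M>0$, part~(b) yields $x_M\le a$ with $f'(\tau)<-M$ for all $\tau\le x_M$; integrating over $[x,x_M]$ for $x\le x_M$ gives $f(x)>f(x_M)+M\,(x_M-x)$, and dividing by $|x|=-x$ and letting $x\to-\infty$ produces $\liminf_{x\to-\infty} f(x)/|x|\ge M$. Combining the two one-sided limits yields $\liminf_{|x|\to\infty} f(x)/|x|=\infty$, which is the claim.

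I do not expect a genuine obstacle here: the whole content is the single observation that an unbounded, monotone derivative forces $f$ to dominate lines of every slope, after which coercivity is immediate. The only point demanding attention is the direction of the inequalities, together with the substitution $|x|=-x$, in the $x\to-\infty$ branch; parts~(b) and (c) were evidently designed precisely so that this argument goes through.
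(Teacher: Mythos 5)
Your proof is correct, and it reaches the conclusion by a more direct route than the paper does. Both arguments rest on the same core observation---that a strictly increasing, unbounded derivative eventually exceeds any prescribed slope $M$, forcing $f$ to dominate a line of slope $M$---but the paper packages this as a proof by contradiction: it assumes the liminf is finite, extracts a sequence $(x_n)$ with $|x_n|>n$ and $f(x_n)/|x_n|<M_0$, reduces by symmetry to a monotone sequence in $[R,\infty)$ (after invoking Remark~\ref{R1} to replace $a,b$ by $-R,R$), and then telescopes with the Mean Value Theorem to derive $M_0\ge 2M_0$. You instead prove the positive statement directly: the Fundamental Theorem of Calculus gives $f(x)\ge f(x_M)+M(x-x_M)$ once $f'>M$ on $[x_M,\infty)$, and dividing by $x$ immediately yields $\liminf_{x\to+\infty}f(x)/x\ge M$ for every $M$, with the mirror-image computation for $x\to-\infty$. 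Your version avoids the sequence extraction, the subsequence/WLOG bookkeeping, and the contradiction scaffolding, and it needs no normalization of $a$ and $b$; what the paper's argument buys in exchange is essentially nothing here beyond a different stylistic template. Your handling of the sign conventions in the $x\to-\infty$ branch (integrating $f'<-M$ over $[x,x_M]$ to get $f(x)>f(x_M)+M(x_M-x)$ and dividing by $|x|=-x$) is correct, as is the preliminary remark that continuity disposes of the boundedness-below clause of Definition~\ref{def:coercive}.
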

\begin{proof}
The statement on the boundedness of $f$ is a direct consequence of the continuity of $f$. By Remark~\ref{R1}, we can assume that Theorem~\ref{convexity} holds with $a=-R$ and $b=R$ for some $R>0$.  To prove \eqref{eq:10} we will show that for all $M>0$ 
 we have 
 \begin{equation}\label{eq:12}
 \liminf_{|x|\to \infty}\displaystyle \frac{f(x)}{|x|}\ge M.
\end{equation}
If \eqref{eq:12} is not true, there exists $M_0>0$ and a sequence $(x_n)\subset \dR$ such that $|x_n|>n$ for all $n\in \dN$ and 
\begin{equation}\label{eq:13}
 \frac{f(x_n)}{|x_n|}<M_0.
\end{equation}
Without loss of generality we can assume that the sequence $(x_n)\subset [R,\infty)$ and strictly monotone increasing, or $(x_n)\subset (-\infty,-R]$ and strictly monotone decreasing (otherwise we take a subsequence of the original sequence). Moreover, we can further assume that $(x_n)\subset [R,\infty)$ and strictly monotone increasing, because the proof for the latter case is identical to the one for the case in which $(x_n)\subset (-\infty,-R]$ and strictly monotone decreasing ({\em mutatis mutandis}).  So it is enough to assume that $(x_n)\subset [R,\infty)$ and strictly monotone increasing. Since $x_n\uparrow +\infty$ and $f'$ is strictly increasing and unbounded above in $[R,\infty)$ there exists $n_0$ such that $f'(x_n)>2M_0$ for all $n\ge n_0$. Using the mean value theorem we can write for all $n> n_0$:
\[
 \begin{array}[h]{rcl}
f(x_{n})-f(x_{n_0})&=&  \ds\sum_{j=n_0}^{n-1} \left(f(x_{j+1})-f(x_j)\right)= \sum_{j=n_0}^{n-1}  f'(\theta_j)(x_{j+1}-x_j)\\
&&\\
 &>& f'(x_{n_0}) \ds\sum_{j=n_0}^{n-1} (x_{j+1}-x_j)> 2M_0 (x_n-x_{n_0}), \\
 \end{array}
\]
where we used that $R\le x_{n_0}\le x_j<\theta_j< x_{j+1}$ and the fact that $f'$ is increasing in $[R,\infty)$ in the first inequality, 
and the definition of ${n_0}$ in the last one. Dividing the expression by $|x_n|=x_n$ and using \eqref{eq:13} we obtain
\[
\begin{array}[h]{rcl}
M_0 -  \ds\frac{ f(x_{n_0})}{x_n}&>& \ds\frac{ f(x_{n})}{x_n}-  \frac{ f(x_{n_0})}{x_n}> 2M_0 \left(1- \frac{x_{n_0}}{x_n}\right).
 \end{array}
\]
Taking limits for $n\to \infty$ and using the fact that $x_n\to +\infty$  we obtain
\[
\begin{array}[h]{rcl}
M_0&\ge& 2M_0,
 \end{array}
\]
a contradiction.  This completes the proof.
\end{proof}

The next proposition shows that $\mu(\cdot,t)$ is a good approximation of $f$ at $x$ for small values of $t$.

\begin{proposition}[Limiting Functions]  \label{limits}
Fix $x\in \dR$ and  $t_0>0$. Assume that $f$ is twice continuously differentiable at $x$.
\begin{equation}  \label{Prop1}
\lim_{t\to 0}\mu(x,t) = f(x)\,,\ \ \lim_{t\to 0}\mu_x(x,t) = f'(x)\,,\ \ \lim_{t\to 0}\,\mu_{xx}(x,t) = f''(x)\,,\ \ \lim_{t\to 0}\,\mu_{tx}(x,t) = 0\,.
\end{equation}
\end{proposition}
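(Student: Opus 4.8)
The plan is to read each of the four limits directly off the explicit expressions in Lemma~\ref{L-prop-mu} together with the definition \eqref{mf-func}, and to treat the first three uniformly via the elementary fact that the \emph{symmetric} difference quotient of a function differentiable at a point converges to its derivative there. Concretely, if $g$ is differentiable at $x$, then
\[
\frac{g(x+t)-g(x-t)}{2t} = \frac12\left(\frac{g(x+t)-g(x)}{t} + \frac{g(x)-g(x-t)}{t}\right) \xrightarrow[t\to 0]{} g'(x).
\]

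First I would dispatch the three easy limits. For $\mu$ itself, I would fix an antiderivative $F$ of the continuous function $f$; then \eqref{mf-func} reads $\mu(x,t) = (F(x+t)-F(x-t))/(2t)$, the symmetric difference quotient of $F$ at $x$, which tends to $F'(x)=f(x)$. For $\mu_x$, formula \eqref{mux-1} is literally the symmetric difference quotient of $f$, so since $f$ is differentiable at $x$ it tends to $f'(x)$. For $\mu_{xx}$, formula \eqref{muxx} is the symmetric difference quotient of $f'$, and since $f$ is twice differentiable at $x$ (so $f'$ is differentiable at $x$) it tends to $f''(x)$.

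The hard part will be the fourth limit, $\mu_{tx}(x,t)\to 0$. In \eqref{mutx} both the averaged-derivative term $\tfrac12(f'(x+t)+f'(x-t))$ and $\mu_x(x,t)$ tend to $f'(x)$, so the bracket tends to $0$; since it is divided by $t\to 0$, we face a $0/0$ indeterminacy and must show the bracket is in fact $o(t)$, i.e.\ that the two pieces agree to first order in $t$. Heuristically this is forced by the fact that $\mu_x(x,\cdot)$ is an \emph{even} function of $t$ (replacing $t$ by $-t$ in \eqref{mux-1} leaves it unchanged), so its $t$-derivative ought to vanish as $t\to 0$. To make this rigorous I would insert second-order Taylor expansions with Peano remainder, valid since $f$ is twice differentiable at $x$:
\[
f(x\pm t) = f(x) \pm f'(x)\,t + \tfrac12 f''(x)\,t^2 + o(t^2), \qquad f'(x\pm t) = f'(x) \pm f''(x)\,t + o(t).
\]
Substituting into \eqref{mutx}, the $f''$ terms cancel in the sum, giving $\tfrac12(f'(x+t)+f'(x-t)) = f'(x) + o(t)$, while the constant and quadratic terms cancel in the difference $f(x+t)-f(x-t)=2f'(x)t+o(t^2)$, giving $\mu_x(x,t) = f'(x) + o(t)$. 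Hence the bracket equals $o(t)$, and multiplying by $1/t$ yields $\mu_{tx}(x,t)=o(1)\to 0$. The one point requiring care is the bookkeeping of the remainders: I must confirm that the $f'(x)$ contributions cancel exactly, so that only genuinely $o(t)$ terms survive inside the bracket before dividing by $t$.
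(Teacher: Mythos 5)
Your proposal is correct and follows essentially the same route as the paper: the decisive fourth limit is handled in both by second-order Taylor expansions with Peano remainder, showing that $\mu_x(x,t)=f'(x)+o(t)$ and that the averaged derivative term is also $f'(x)+o(t)$, so the bracket in \eqref{mutx} is $o(t)$ before dividing by $t$. The only (cosmetic) difference is that you obtain the first three limits as symmetric difference quotients of $F$, $f$ and $f'$ respectively, whereas the paper invokes l'H{\^o}pital's rule; both are valid and yield the same conclusion.
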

\begin{proof}
The first limit is a consequence of l'H{\^ o}pital's rule:
\[
\lim_{t\to 0}\mu(x,t) = \lim_{t\to 0}\frac{\ds\int_{x-t}^{x+t} f(\tau)\,d\tau}{2\,t}= \lim_{t\to 0} \frac{f(x+t) +
f(x-t)}{2} = f(x)\,.
\]
The second and third limits are also a result of the application of l'H{\^ o}pital's rule on the limit, as $t\to 0$, of \eqref{mux-1} and \eqref{muxx}, respectively.  Proving the last equality is more involved: Since $f$ is ${\cal C}^2$ we can write
\begin{eqnarray*}
&& f(x+t) = f(x)+ tf'(x)+o_1(t^2)\,, \\[1mm]
&& f(x-t) = f(x)- tf'(x)+o_2(t^2)\,,
\end{eqnarray*}
where $\lim_{t\to 0} (o_i(t^2)/t) = 0$, for $i=1,2$. Using the two equalities above we derive
\begin{eqnarray}
\mu_{x}(x,t) &=& \ds\frac{1}{2t}\,(f(x+t) - f(x-t))= \frac{1}{2t}\,(2tf'(x) + o_3(t^2)) \nonumber \\
&=& f'(x) + o(t)\,,  \label{m1}
\end{eqnarray}
where $\lim_{t\to 0} (o_i(t)/t) = 0$.  Using \eqref{m1} we have
\begin{eqnarray*}
\lim_{t\to 0} \mu_{tx}(x,t) &=&  \lim_{t\to 0} \frac{1}{2t}  \left[f'(x+t)+f'(x-t)\right] -  \frac{1}{t}\mu_x(x,t) \\[1mm]
&=& \lim_{t\to 0}\frac{1}{2t}\left[f'(x+t)+f'(x-t)\right] -  \frac{1}{2t}f'(x)  -  \frac{1}{2t}f'(x)- \frac{o(t)}{t} \\[2mm]
&=& \lim_{t\to 0}\frac{1}{t}  \left[ \frac{f'(x+t)-f'(x)}{t}- \frac{[f'(x-t) -f'(x)]}{(-t)} \right]-  \frac{o(t)}{t} \\[2mm]
&=& f''(x)-f''(x) =0.  
\end{eqnarray*}
\end{proof}


\section{A Trajectory Method Using Steklov regularization}

The trajectory approach we formulate is based on constructing a continuously differentiable path through points where
\begin{equation}  \label{grad}
\mu_x(x,t) = 0\,,\quad \forall t\in(0,t_0]\,.
\end{equation}
We interpret the variable $x$ as a function dependent on $t$, i.e., $x : [0,t_0] \to \dR$, mapping $t \mapsto x(t)$. By taking the total derivative of both sides of \eqref{grad} with respect to the independent variable $t$, we obtain
\begin{equation}  \label{grad1}
\mu_{xx}(x(t),t) \dot{x}(t) + \mu_{tx}(x(t),t) = 0\,,\hbox{ for a.e. } t\in(0,t_0]\,,
\end{equation}
where $\dot{x}$ stands for $dx/dt$. In particular, we note that, for $(x_0,t_0)=(x(t_0),t_0)$, we have by \eqref{grad} that $\mu_x(x_0,t_0) = 0$.  After re-arranging \eqref{grad1}, one obtains the initial value problem
\begin{equation}  \label{ODE_valley}
\dot{x}(t) = -\frac{\mu_{tx}(x(t),t)}{\mu_{xx}(x(t),t)}\,,\quad\mbox{ for a.e. } t\in(0,t_0]\,,\quad\mbox{with } x(t_0) = x_0\,,
\end{equation}
provided that $\mu_{xx}(x(t),t)\not=0$ a.e. in $(0,t_0]$.

\begin{remark} \rm 
Suppose that $x(\cdot)$ is a solution of the ODE in~\eqref{ODE_valley}.  Then Proposition~\ref{limits} implies that, if $\lim_{t\to 0^+}f''(x(t))\not= 0$, then $\lim_{t\to 0^+}\dot{x}(t) = 0$.
\end{remark}

\subsection{An algorithm for global optimization}

We motivate our first method as follows.  Assume that $f$ is as in Theorem~\ref{convexity}.  Let $x_0\in \dR$ and $t_0>0$ be such that 
\[
f'(x + t_0) - f'(x - t_0) > 0\,,\ \ \forall x\in\dR\,,\quad\mbox{and}\quad f(x_0 + t_0) - f(x_0 - t_0) = 0\,.
\]
From \eqref{mux-1} and \eqref{muxx}, the last two expressions imply that $\mu_{xx}(x,t_0) > 0$ and $\mu_x(x_0,t_0) = 0$, respectively. Using \eqref{grad} and \eqref{muxx}--\eqref{mutx} in the IVP~\eqref{ODE_valley}, we obtain 
\begin{equation}  \label{ODE_valley2}
\dot{x}(t) = -\frac{f'(x(t)+t) + f'(x(t)-t)}{f'(x(t)+t) - f'(x(t)-t)}\,,\quad\mbox{ for a.e. } t\in(0,t_0]\,,\quad\mbox{with } x(t_0) = x_0\,.
\end{equation}

Algorithm~\ref{algo1} below serves to find a global minimizer of $f$.

\begin{algorithm}  \label{algo1} \
\begin{description}
\vspace*{-3mm}
\item[Step \boldmath{$1$}] Choose the parameter $t_0>0$ large enough so that $\mu(\cdot,t_0)$ is convex. Find the (global) minimizer $x_0$ of $\mu(\cdot,t_0)$, i.e., solve $f(x_0 + t_0) - f(x_0 - t_0) = 0$ for $x_0$.
\item[Step \boldmath{$2$}] Solve the initial value problem in~\eqref{ODE_valley2}.
\item[Step \boldmath{$3$}] Report $\lim_{t\to 0^+} x(t)=:x^*$ as a global minimizer of $f$.
\end{description}
\end{algorithm}

Algorithm~\ref{algo1} is said to be {\em well-defined} for the function $f$ if there exist $x_0$ and $t_0>0$ such that Steps~1--3 of the algorithm can be carried out. This
entails, in particular,  that the solution of the IVP in Step~2 is obtained uniquely. Theorem~\ref{convexity} establishes assumptions on $f$ under which Step~1 can be carried out.

In the following lemma, we show that Algorithm~\ref{algo1} is {\em scale-shift invariant}; i.e., if Algorithm~\ref{algo1} is well-defined for the
function $f$, then it is also well-defined for any scale change and horizontal translation, of $f$.

\begin{lemma}[Scale-Shift Invariance] \label{shift_lemma}
Fix $\alpha>0$ and $a\in \dR$. Assume that Algorithm~\ref{algo1} is well-defined for $f$, and let $x_0$ and $t_0$ be as in Step 1 for $f$. Let $x^*$ be the global minimizer of $f$ generated by Step 3 of Algorithm~\ref{algo1} for $f$. Set $g(x) := f(\alpha\,x-a)$ and denote the Steklov function associated with $g$ by
\begin{equation}  \label{mf-func_shift}
\mut(x,t) := \frac{1}{2t}\,\int_{x-t}^{x+t} g(\tau)\,d\tau\,.
\end{equation}
Then Algorithm~\ref{algo1}, with $\mu$
replaced by $\mut$ is well-defined for $g$ and generates $z^*:=\dfrac{x^* + a}{\alpha}$, which is a
global minimizer  of $g$. In this case, Step 1 can be carried out with $s_0:=t_0/\alpha$, and $z_0:=\dfrac{x_0+a}{\alpha}$.
 
\end{lemma}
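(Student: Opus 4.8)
The plan is to reduce every claim about $g$ to the corresponding already-established fact about $f$ by means of a single change-of-variables identity relating the two Steklov functions. Substituting $\sigma = \alpha\tau - a$ in the integral defining $\mut$ in \eqref{mf-func_shift} and using $\alpha>0$, I would establish
\[
\mut(x,t) = \frac{1}{2\alpha t}\int_{(\alpha x - a)-\alpha t}^{(\alpha x - a)+\alpha t} f(\sigma)\,d\sigma = \mu(\alpha x - a,\,\alpha t)\,.
\]
Everything else is then obtained by reading this identity through the chain rule. In particular, differentiating in $x$ yields $\mut_x(x,t) = \alpha\,\mu_x(\alpha x - a,\alpha t)$ and $\mut_{xx}(x,t) = \alpha^2\,\mu_{xx}(\alpha x - a,\alpha t)$, so the sign of the second derivative, and hence convexity, is preserved along the obvious correspondence of arguments.

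For Step~1 applied to $g$, choosing $s_0 := t_0/\alpha$ gives $\alpha s_0 = t_0$, whence $\mut(\cdot,s_0) = \mu(\alpha\,\cdot - a,\,t_0)$ is the convex function $\mu(\cdot,t_0)$ precomposed with the increasing affine map $x\mapsto \alpha x - a$; since $\alpha>0$ this composition is again convex (strictly so, preserving the uniqueness of the minimizer), and its stationary point occurs where $\alpha x - a = x_0$, i.e.\ at $x = (x_0+a)/\alpha = z_0$. Equivalently, I would verify the defining equation of Step~1 directly: $g(z_0+s_0)-g(z_0-s_0) = f(\alpha z_0 + t_0 - a) - f(\alpha z_0 - t_0 - a) = f(x_0+t_0)-f(x_0-t_0) = 0$.

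For Step~2, I would exhibit the solution of the $g$-IVP explicitly by setting $z(t) := (x(\alpha t)+a)/\alpha$, where $x(\cdot)$ is the unique solution of the $f$-IVP \eqref{ODE_valley2} on $(0,t_0]$. Then $\alpha z(t)-a = x(\alpha t)$ and, using $g'(\cdot)=\alpha f'(\alpha\,\cdot - a)$, the common factor $\alpha$ cancels in the numerator and denominator of the $g$-version of \eqref{ODE_valley2}; combined with the chain-rule identity $\dot z(t)=\dot x(\alpha t)$ and the initial value $z(s_0)=(x(t_0)+a)/\alpha = z_0$, this shows $z$ solves the $g$-IVP. Uniqueness transfers because $z\mapsto x$ via $x(s)=\alpha z(s/\alpha)-a$ is a bijection between solutions of the two problems, so a second solution for $g$ would yield a second solution for $f$, contradicting well-definedness; the nonvanishing of the denominator along the trajectory likewise carries over through $\mut_{xx}(z(t),t)=\alpha^2\mu_{xx}(x(\alpha t),\alpha t)$. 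Finally, for Step~3 I would take $t\to 0^+$ in $z(t)=(x(\alpha t)+a)/\alpha$ to get $z^* = (x^*+a)/\alpha$, and conclude that $z^*$ is a global minimizer of $g$ because $x\mapsto \alpha x - a$ is a bijection of $\dR$, so $g(\cdot)=f(\alpha\,\cdot-a)$ is minimized exactly where $\alpha z - a = x^*$.

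The genuine content is the change-of-variables identity together with the accompanying time-rescaling $s=\alpha t$; once these are in place the argument is bookkeeping with the factor $\alpha$. Accordingly, I expect the main obstacle to be the careful verification that the rescaled trajectory solves the $g$-IVP with the correct initial data \emph{and} that both uniqueness and the interval $(0,s_0]$ transfer faithfully — that is, keeping precise track of how the independent variable $t$ for $g$ maps onto $(0,t_0]$ under $s=\alpha t$.
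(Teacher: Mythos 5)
Your proposal is correct and follows essentially the same route as the paper's proof: the change-of-variables identity relating $\mut$ to $\mu$, convexity of the affine precomposition for Step~1, the rescaled trajectory $z(t)=(x(\alpha t)+a)/\alpha$ for Step~2, and passage to the limit $t\to 0^+$ for Step~3. The only discrepancy is the multiplicative constant: you obtain $\mut(x,t)=\mu(\alpha x-a,\alpha t)$, whereas the paper's identity \eqref{mu_shift} carries an extra factor $\alpha$ (the Jacobian $d\tau=d\eta/\alpha$ appears to have been dropped there); your version is the correct one, and since the positive constant cancels in the ratio $\mut_{tx}/\mut_{xx}$ and does not affect signs or zeros, neither argument's conclusions are impaired.
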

\begin{proof}
Using the definition of $g$ and \eqref{mf-func_shift} we can write 
\begin{equation} \label{mu_shift} 
\begin{array}{rcl}
\mut(x,t)& =& \ds\frac{1}{2t}\,\int_{x-t}^{x+t} f(\alpha\,\tau - a)\,d\tau\, =
\frac{1}{2t}\,\int_{\alpha(x-t)-a}^{\alpha(x+t)-a} f(\eta)\,d\eta\,\\
&&\\
& = &  \ds\frac{\alpha}{2(\alpha\,t)}\,\int_{\alpha\,x-a-(\alpha t)}^{\alpha\,x-a+(\alpha t)} f(\eta)\,d\eta\,=
\alpha\,\mu(\alpha\,x-a,\alpha\,t)\,,
\end{array}
\end{equation}
through a change of the dummy integration variable, $\eta = \alpha\,\tau - a$, and the definition in \eqref{mf-func}.  Then, by taking partial derivatives of $\mut$, where we employ the chain rule on the right-most term of the second line in \eqref{mu_shift}, we get
\begin{equation} \label{mu_shift_derivs} 
\begin{array}{cc}
\mut_x(x,t) = \alpha^2\mu_x(\alpha\,x-a,\alpha\,t),&\mut_{xx}(x,t) =
\alpha^3\mu_{xx}(\alpha\,x-a,\alpha\,t)\\
&\\
\mut_{tx}(x,t) = \alpha^3\mu_{tx}(\alpha\,x-a,\alpha\,t)\,.&
\end{array}
\end{equation}
Since Algorithm~\ref{algo1} is well-defined for $f$, Steps~1 and 2 of the
algorithm can be executed, generating a global minimizer $x_0$ of  $\mu(\cdot,t_0)$ in Step 1, a unique solution $x(\cdot)$ to the IVP
in~\eqref{ODE_valley} in Step 2, where $\mu(\cdot,t_0)$ is convex and
\begin{equation}\label{q:14}
\mu_{x}(x_0,t_0) = 0\,.
\end{equation}
In Step~3, a global minimizer of $f$ is obtained as $\lim_{t\to 0^+} x(t) = x^*$. Take $z_0 := (x_0+a)/\alpha$ and $s_0:=t_0/\alpha$.  We show now that {Step~1} is well defined for $g$, for $s_0$ and $z_0$ in place of $t_0$ and $x_0$, respectively. Indeed, by Step 1 for $f$ we know that $\mu(\cdot,t_0)$ is convex. Hence, the composition of $\mu(\cdot,t_0)$ with the linear function $L(x)=\alpha\,x-a$ is also convex. Namely, the function $\mu(\alpha\,(\cdot)-a,t_0)$ is convex, and hence any positive multiple of it is convex. Therefore, by \eqref{mu_shift} we deduce that $\mut(\cdot,t_0)$ is convex. 
The first  equality in \eqref{mu_shift_derivs},  combined with \eqref{q:14} and the definitions of $z_0$ and $t_0$ give
\[
0=\mu_{x}(x_0,t_0)=\mu_{x}(\alpha z_0-a,\alpha\, s_0)=\frac{1}{\alpha^2}\,\mut_x(z_0,s_0)\,,
\]
so that $z_0$ is a global minimizer of $\mut(\cdot,s_0)$. This shows that {Step~1} is well defined for $g$. We proceed now to show that Step 2 is well defined for $g$. Take  $x(\cdot)$ to be the unique solution of the IVP in~\eqref{ODE_valley} obtained in Step 2 for $f$, and define $z(t) :=(x(\alpha\,t)+a)/\alpha$, for all $t\in(0,s_0]$. We claim that $z(\cdot)$ solves the following IVP:
\begin{equation}  \label{ODE_valley3}
\dot{z}(t) = -\frac{\mut_{tx}(z(t),t)}{\mut_{xx}(z(t),t)}\,,\quad\mbox{ for
  a.e. } t\in(0,s_0]\,,\quad z(s_0) = z_0\,,
\end{equation}
which is the IVP in~\eqref{ODE_valley} with $\mu$ replaced by $\mut$ and $x_0$ replaced by $z_0$. Indeed, take $t\in (0,s_0]=(0,t_0/\alpha]$. Then $\alpha\,t\in (0,t_0]$ and by \eqref{ODE_valley} we can write
\[
\dot{z}(t)=\dot{x}(\alpha\,t)=-\frac{\mu_{tx}(x(\alpha\,t),\alpha\,t)}{\mu_{xx}(x(\alpha\,t),\alpha\,t)}=
-\frac{\mu_{tx}(\alpha\,z(t)-a,\alpha\,t)}{\mu_{xx}(\alpha\,z(t)-a,\alpha\,t)}=-\frac{\mut_{tx}(z(t),t)}{\mut_{xx}(z(t),t)},
\]
where we have used the definition of $z$ in the first equality, the definition of $x$ as solution of \eqref{ODE_valley} in the second equality.  We have used the second and third equalities of \eqref{mu_shift_derivs} in the third equality above. The fact that $z(s_0)=z_0$ follows directly from the definition of $z$ and the fact that $x(t_0)=x_0$. Therefore, $z$ solves \eqref{ODE_valley3} and hence Step 2 is well defined for $g$. To check that the same holds for Step 3, take
$x^*=\lim_{t\to 0^+} x(t)$ to be the global minimizer of $f$ generated in Step 3 for $f$. The solution of the IVP in \eqref{ODE_valley3} will now result in
\[
\lim_{t\to 0^+} z(t) = \lim_{t\to 0^+} \frac{x(\alpha\,t) + a}{\alpha} = \frac{x^* + a}{\alpha}\,.
\]
Since $x^*$ is a global minimizer of $f$, we have
\[
g\left(\frac{x^* + a}{\alpha}\right)=f(x^*)\le f(\alpha\,x-a)=g(x),\quad, \forall\, x\in \dR\,,
\]
so $(x^* + a)/\alpha$ is a global minimizer of $g$. Hence, Step 3 is well defined for $g$ and the proof is complete.
\end{proof}

\section{Quartic Polynomials}
\label{sec:quartic}

In this section, we consider the special case of monic {\em depressed} quartic polynomials, namely,
\begin{equation}  \label{quartic}
f(x) = x^4 + a_2\,x^2 + a_1\,x + a_0\,,
\end{equation}
where $a_0,a_1$ and $a_2$ are real constants such that $a_2<0$ and $a_1\neq0$. Note that the depressed form is general enough. Indeed, given an arbitrary quartic polynomial, $g(y) = y^4 + b_3\,y^3 + b_2\,y^2 + b_1\,y + b_0$, the substitution $y = x-(b_3/4)$ reduces $g$ to a depressed form. As for the assumption $a_2<0$, note that, if $a_2\ge0$ then $f''(x) = 12\,x^2 + 2\,a_2\ge0$ which yields that $f(\cdot)$ is convex. In this case, there is no need to apply Algorithm~\ref{algo1} to find a global minimum of $f(\cdot)$.  Assumption $a_1\neq0$ is posed since if $a_1=0$ then $f(\cdot)$ has two global minimizers simply given by the set $\{-\sqrt{-a_2/2},\sqrt{-a_2/2}\}$.  Hence, the non-trivial case is when  $a_2<0$ and $a_1\neq0$.

\subsection{Properties of monic quartic polynomials}

A quartic polynomial can have at most two local minima. The following lemma helps distinguish which of these two is the global minimum.

\begin{lemma}[Curvature] \label{curvature} 
Let $f$ be a monic quartic polynomial. Assume that $f'(x_1) = f'(x_2) = 0$ with $x_1\not=x_2$.  The following properties hold.
\begin{itemize}
\item[(i)] $f(x_1) < f(x_2)$ if, and only if, $f''(x_1) > f''(x_2)$, in particular, $|x_1| > |x_2|$.
\item[(ii)] $f(x_1) = f(x_2)$ if, and only if, $f''(x_1) = f''(x_2)$.
\end{itemize}
\end{lemma}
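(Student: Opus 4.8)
The plan is to reduce both parts to one algebraic identity for the difference of the two critical values. I work with the depressed monic form $f(x)=x^4+a_2\,x^2+a_1\,x+a_0$ of this section, which is where the clause $|x_1|>|x_2|$ acquires its meaning: for a depressed quartic $f''(x)=12\,x^2+2\,a_2$, so $f''(x_1)-f''(x_2)=12\,(x_1^2-x_2^2)$, whence $f''(x_1)>f''(x_2)\iff|x_1|>|x_2|$ and $f''(x_1)=f''(x_2)\iff|x_1|=|x_2|$. This already merges the statements involving $f''$ with those about $|x_1|,|x_2|$, so the only real content is to tie the ordering of $f(x_1),f(x_2)$ to that of $x_1^2,x_2^2$.

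The key step I would establish is
\[
f(x_1)-f(x_2)=-(x_1+x_2)\,(x_1-x_2)^3.
\]
The cleanest route writes $f(x_1)-f(x_2)=\int_{x_2}^{x_1}f'(x)\,dx$ and factors $f'$ through its roots. Since $f'(x)=4\,x^3+2\,a_2\,x+a_1$ has no quadratic term, its three roots sum to zero; because $f'$ is a real cubic already possessing the two real roots $x_1,x_2$, the third root is real and equals $x_3=-(x_1+x_2)$, so $f'(x)=4\,(x-x_1)(x-x_2)(x+x_1+x_2)$. Substituting $x=x_2+(x_1-x_2)\,u$ and integrating the resulting polynomial in $u$ over $[0,1]$ (using $\int_0^1 u(1-u)\,du=1/6$ and $\int_0^1 u^2(1-u)\,du=1/12$) yields the displayed identity after simplification. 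A purely algebraic alternative expands $f(x_1)-f(x_2)$, extracts the factor $(x_1-x_2)$, and eliminates $a_1,a_2$ via the two relations obtained by adding and subtracting $f'(x_1)=f'(x_2)=0$, namely $a_2=-2\,(x_1^2+x_1x_2+x_2^2)$ and $a_1=4\,x_1x_2\,(x_1+x_2)$; both routes give the same identity.

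The conclusion is then immediate. Writing $(x_1-x_2)^3=(x_1-x_2)(x_1-x_2)^2$ with $(x_1-x_2)^2>0$, the sign of $(x_1+x_2)(x_1-x_2)^3$ equals that of $(x_1+x_2)(x_1-x_2)=x_1^2-x_2^2$. Hence $f(x_1)<f(x_2)\iff f(x_1)-f(x_2)<0\iff x_1^2-x_2^2>0\iff|x_1|>|x_2|\iff f''(x_1)>f''(x_2)$, which is~(i); and $f(x_1)=f(x_2)\iff x_1+x_2=0\iff x_1^2=x_2^2\iff f''(x_1)=f''(x_2)$, which is~(ii).

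The main obstacle is establishing the factored identity; everything after it is bookkeeping. I expect the care to lie in the simplification (tracking the symmetric functions $x_1+x_2$ and $x_1x_2$) and in recognizing that the depressed normalization is essential: the equivalence $f(x_1)<f(x_2)\iff f''(x_1)>f''(x_2)$ is invariant under horizontal shifts and so holds for any monic quartic, but the clause $|x_1|>|x_2|$ is \emph{not} shift-invariant, which is precisely why the depressed form is used here.
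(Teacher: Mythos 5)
Your proof is correct, but it reaches the key identity by a genuinely different route from the paper's. The paper writes the two Taylor expansions of $f$ centred at each critical point, subtracts them, and massages the third-derivative term to arrive at $12\,[f(x)-f(y)]/(x-y)^2=f''(y)-f''(x)$ (its equation \eqref{eq:1a}); you instead exploit Vieta's relations on the depressed cubic $f'$ (the three roots sum to zero, so the third root is $-(x_1+x_2)$ and is automatically real), factor $f'(x)=4(x-x_1)(x-x_2)(x+x_1+x_2)$, and integrate to get the fully factored form $f(x_1)-f(x_2)=-(x_1+x_2)(x_1-x_2)^3$. The two identities are equivalent, since $f''(x_2)-f''(x_1)=12(x_2^2-x_1^2)$ for a depressed quartic, but yours is sharper: the sign of the difference and the characterization of equality ($x_1=-x_2$) are read off instantly, with no separate argument needed for part (ii). Both arguments rest on the same reduction to the depressed case by a horizontal shift; you handle it correctly, and you are right (and more explicit than the paper) that the clause $|x_1|>|x_2|$ is not shift-invariant and is only meaningful in the depressed normalization, whereas the equivalence between the ordering of the values and the ordering of the curvatures survives the shift and so holds for an arbitrary monic quartic. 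I verified your integral: with the substitution $x=x_2+(x_1-x_2)u$ one indeed obtains $-(x_1+x_2)(x_1-x_2)^3$, consistent with \eqref{eq:1a}.
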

\begin{proof} 
The proof of parts (i) and (ii) is done in two steps. \\
{\sc Step 1:} In this step, we show that it is enough to prove the lemma for a depressed quartic polynomial. We prove the claim for part (i). The claim for part (ii) is proved in an identical way. Assume that part (i) of the lemma is true for depressed monic quartic polynomials and that we have a quartic polynomial $h(x)=x^4 +c_3x^3 + c_2 x^2 +c_1 x +c_0$ with $c_3\not=0$. Assume that $h'(x_1)= h'(x_2) = 0$ As noted above, the ``shifted'' polynomial $f(x):=h(x-(c_3/4))$ is (monic and) depressed. Using the chain rule we have
\[
h'(x_1)=f'(x_1+(c_3/4)) = h'(x_2) = f'(x_2+(c_3/4)) = 0\,.
\]
Since part (i) of the lemma is true for $f$ we have
\[
h(x_1) = f(x_1+(c_3/4))<f(x_2+(c_3/4)) = h(x_2)
\]
if and only if
\[
h''(x_1)=f''(x_1+(c_3/4))>f''(x_2+(c_3/4))=h''(x_2)\,,
\]
and hence part~(i) of the lemma holds for $h$. As mentioned before, the proof of the fact that part~(ii) of the lemma holds for $h$ follows identical steps. Therefore, it is enough to prove the lemma for depressed quartic polynomials.\\ {\sc Step 2:}  In this step, we show that, if $f$ is a depressed quartic polynomial such that $f'(x)=f'(y)=0$ , with $x\not=y$, then we have
\begin{equation}  \label{eq:1a}
12\,\frac{[f(x)-f(y)]}{(x-y)^2}=(f''(y)-f''(x)).
\end{equation}
Note that parts (i) and (ii) of the lemma follow directly from \eqref{eq:1a}. This is straightforward for part (ii). As for part (i), if \eqref{eq:1a} holds, the assumption on $x_1$ and $x_2$ implies that for $x:=x_1$ and $y:=x_2$ we have
\[
\sgn\left[f(x_1)-f(x_2)\right] = \sgn\left[f''(x_2)-f''(x_1)\right] = -\sgn\left[f''(x_1)-f''(x_2)\right],  
\]
which is the statement of part (i) of the lemma, also observing that $f''(x_1)>f''(x_2)$ if and only if $12x_1^2 + 2a_2 > 12x_2^2 + 2a_2$, i.e., $|x_1| > |x_2|$. Hence, we proceed to prove \eqref{eq:1a} when $f'(x)=f'(y)=0$ and $f$ is a depressed quartic polynomial. The assumption on $x$ and $y$ and the Taylor development of $f$ gives
\begin{eqnarray*}
 f(x)-f(y)&=&[f''(y)/2](x-y)^2 + [f'''(y)/6] (x-y)^3 + (x-y)^4\,, \\
 f(y)-f(x)&=&[f''(x)/2](y-x)^2 + [f'''(x)/6] (y-x)^3 + (y-x)^4\,,
\end{eqnarray*}
By subtracting side-by-side the second equality from the first one, and re-arranging the resulting expression we obtain
\begin{equation}
  \label{eq:2a}
 \begin{array}[h]{rcl}
\displaystyle 2\frac{[f(x)-f(y)]}{(x-y)^2}&=&\displaystyle  \frac{[f''(y)-f''(x)]}{2}+ \frac{[f'''(y)+f'''(x)]}{6} (x-y).
\end{array}
 \end{equation}

 By direct calculation, the rightmost term in \eqref{eq:2a} can be
 written as follows
\begin{eqnarray*}
\frac{[f'''(y)+f'''(x)]}{6} (x-y) &=& \frac{[24y+24x]}{6} (x-y)=4(x^2-y^2) \\[2mm]
&=& \left[(12x^2+2a_2)-(12y^2+2a_2) \right] / 3 \\[1mm]
&=& [f''(x)-f''(y)] / 3 \\[1mm]
&=& -[f''(y)-f''(x)] / 3\,.
\end{eqnarray*}
Using this in \eqref{eq:2a} yields
\begin{equation}
  \label{eq:3}
 \begin{array}[h]{rcl}
\displaystyle 2\,\frac{[f(x)-f(y)]}{(x-y)^2}&=&\displaystyle
\frac{[f''(y)-f''(x)]}{2} -\frac{[f''(y)-f''(x)]}{3} =\frac{[f''(y)-f''(x)]}{6},
\end{array}
 \end{equation}
which is  \eqref{eq:1a}. The proof is complete.
\end{proof}

\begin{remark}\rm
The previous lemma is not valid for higher degree polynomials. The function $f(x)=x^6 -\dfrac{8}{5}\,x^5+\dfrac{2}{3}\,x^3$, with the local extrema $x_1=0$ and $x_2=1$, furnishes a counterexample.
\end{remark}

\begin{lemma}[Sign of a Minimizer]  \label{global_sign}
Consider a monic depressed quartic polynomial $f$, with $a_1\neq0$ and $a_2<0$. If $x_1$ and $x_2$ are the local minimizers of $f$, then $\sgn(x_1) = -\sgn(x_2)$. Suppose that $x^*$ is the global minimizer of $f$. Then $|x^*| > \sqrt{-a_2/6}$ and, in particular, $\sgn(x^*) = -\sgn(a_1)$. 
\end{lemma}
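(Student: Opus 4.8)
The plan is to read the whole statement off the first two derivatives, $f'(x) = 4x^3 + 2a_2 x + a_1$ and $f''(x) = 12x^2 + 2a_2$. Since $a_2<0$, the zero set of $f''$ is exactly $\{\pm\sqrt{-a_2/6}\}$, and $f''(x)>0$ precisely when $|x| > \sqrt{-a_2/6}$. First I would settle the magnitude bound: any local minimizer $\xb$ satisfies $f''(\xb)\ge 0$, and the inequality must be strict, because a critical point with $f''=0$ is one of $\pm\sqrt{-a_2/6}$, where $f'''(x)=24x\neq 0$, so it is a horizontal inflection point and hence not an extremum. Thus every local minimizer $\xb$ has $f''(\xb)>0$, i.e. $|\xb| > \sqrt{-a_2/6}$. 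Since $f$ is coercive (Proposition~\ref{coercive}), the global minimizer $x^*$ exists, is a critical point, and hence is a local minimizer, so $|x^*| > \sqrt{-a_2/6}$ follows immediately.

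Next I would prove the opposite-sign claim for the two local minimizers. Write $r := \sqrt{-a_2/6}$, and suppose $f$ has two local minimizers $x_1 < x_2$. Because they are local minima of a smooth function, $f$ attains its maximum over $[x_1,x_2]$ at an interior point $x_0$, which is therefore a local maximizer with $x_1 < x_0 < x_2$ and $f'(x_0)=0$. The same strictness argument as above (a critical point with $f''=0$ is a horizontal inflection, not a maximum) gives $f''(x_0)<0$, i.e. $|x_0| < r$, while $|x_1|,|x_2| > r$. Now combine ordering with these moduli: $x_1 < x_0 < r$ together with $|x_1| > r$ forces $x_1 < -r$, and $x_2 > x_0 > -r$ together with $|x_2| > r$ forces $x_2 > r$. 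Hence $x_1 < -r < 0 < r < x_2$, which is exactly $\sgn(x_1) = -\sgn(x_2)$.

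For the sign of the global minimizer the key observation is the reflection identity $f(x) - f(-x) = 2a_1 x$, obtained by cancelling the even-degree terms $x^4$, $a_2 x^2$, $a_0$. Suppose, for contradiction, that $\sgn(x^*) = \sgn(a_1)$, i.e. $a_1 x^* > 0$. Then $f(x^*) - f(-x^*) = 2a_1 x^* > 0$, so $f(-x^*) < f(x^*)$, contradicting the global minimality of $x^*$. Moreover $x^* \neq 0$, since $f'(0) = a_1 \neq 0$ rules out $0$ as a critical point. Therefore $\sgn(x^*) = -\sgn(a_1)$.

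The derivative computations and the identification of the zero set of $f''$ are immediate, so the real care is needed in the opposite-sign argument: I must justify that two genuine local minima are separated by a genuine local maximum (so that the middle root of the cubic $f'$ is the one of small modulus) and then run the ordering-versus-modulus bookkeeping without slips. The companion subtlety, shared across the arguments, is the strictness of the second-derivative inequalities, i.e. excluding the possibility that a minimizer or maximizer sits exactly at $\pm r$; this is handled uniformly by noting $f'''(\pm r)=\pm 24r \neq 0$, which turns such points into horizontal inflections rather than extrema.
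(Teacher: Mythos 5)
Your proof is correct, and for the central claim $\sgn(x^*)=-\sgn(a_1)$ it takes a genuinely different and more economical route than the paper. The paper integrates $f''$ from $\pm\widetilde{x}$ (with $\widetilde{x}=\sqrt{-a_2/6}$) out to the two minimizers, adds the resulting identities to obtain $2a_1+\int_{\widetilde{x}}^{x_2}f''(y)\,dy-\int_{\widetilde{x}}^{-x_1}f''(y)\,dy=0$, deduces from the sign of $a_1$ which of $|x_1|,|x_2|$ is larger, and then invokes the Curvature Lemma~\ref{curvature} to conclude that the minimizer of larger modulus is the global one. You instead observe the reflection identity $f(x)-f(-x)=2a_1x$ (the even-degree terms cancel), so that $a_1x^*>0$ would give $f(-x^*)<f(x^*)$ and contradict global minimality; together with $f'(0)=a_1\neq0$ this settles the sign in two lines and makes no use of Lemma~\ref{curvature} at all. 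For the remaining claims your argument is essentially the paper's (both rest on the sign pattern of $f''(x)=12x^2+2a_2$), but you are more careful on two points the paper glosses over: you exclude the boundary cases $x_i=\pm\widetilde{x}$ via $f'''(\pm\widetilde{x})\neq 0$ (a critical point there is a horizontal inflection, not an extremum), and you justify that the two minima are separated by a genuine interior maximum of small modulus, which is what forces them onto opposite sides of the origin. The one step you assert without proof --- that the maximum of $f$ over $[x_1,x_2]$ is attained in the interior --- is harmless, since a non-constant polynomial cannot be constant on a one-sided neighbourhood of $x_1$ or $x_2$; a one-line remark would close it. The trade-off: the paper's computation additionally identifies which minimizer lies farther from the origin, tying into Lemma~\ref{curvature}, whereas your argument is shorter, self-contained, and delivers the bound $|x^*|>\sqrt{-a_2/6}$ even when $f$ has only one local minimizer.
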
 
\begin{proof}
Suppose that $x_1$ and $x_2$ are the local minimizers of $f$.  Note that $f''(x) = 12\,x^2 + 2\,a_2$ is an even function, i.e., $f''(-x) = f''(x)$.  Since $a_2<0$, we have $f''(x) = 0$ when $x = \sqrt{-a_2/6} =: \widetilde{x}$ and $x = -\widetilde{x}$.  Then, since $f''(x)>0$ for $x<-\widetilde{x}$ and $x>\widetilde{x}$, one of the local minima is placed to the left of $-\widetilde{x}$ and the other to the right of $\widetilde{x}$, i.e., $x_1<-\widetilde{x}<0$ and $x_2>\widetilde{x}>0$.  So $\sgn(x_1) = -\sgn(x_2)$ and $|x^*| > \sqrt{-a_2/6}$. Now, we can write 
\begin{eqnarray}
f'(x_1) &=& f'(-\widetilde{x}) + \int_{-\widetilde{x}}^{x_1} f''(y)\,dy \nonumber \\
&=& f'(-\widetilde{x}) - \int_{\widetilde{x}}^{-x_1} f''(y)\,dy = 0 \label{int1}
\end{eqnarray}
where, in \eqref{int1}, a change of variables and the fact that $f''$ is even have been used.  We can also write
\begin{equation}  \label{int2}
f'(x_2) = f'(\widetilde{x}) + \int_{\widetilde{x}}^{x_2} f''(y)\,dy = 0\,.
\end{equation}
Adding Equations~\eqref{int1}-\eqref{int2} side by side and using
$f'(\widetilde{x}) + f'(-\widetilde{x}) = 2\,a_1$, one gets
\begin{equation}  \label{int1&2}
2\,a_1 + \int_{\widetilde{x}}^{x_2} f''(y)\,dy - \int_{\widetilde{x}}^{-x_1} f''(y)\,dy = 0\,.
\end{equation}
To complete the proof, we consider two cases: $a_1<0$ and $a_1>0$. If $a_1<0$, from \eqref{int1&2} we have,
\[
-2\,a_1=\int_{\widetilde{x}}^{x_2} f''(y)\,dy - \int_{\widetilde{x}}^{-x_1} f''(y)\,dy > 0\,,
\]
which implies that $x_2>-x_1>0$, since $f''(y)>0$ over both integration intervals. Since $f'''(x)=24x>0$ for $x>0$, $f''$ is increasing in $(0,\infty)$ so $f''(x_2)>f''(-x_1)=f''(x_1)$.  Then, by Lemma~\ref{curvature}, $f(x_2)<f(x_1)$ and hence $x^*=x_2>0$ is the global minimizer.  Therefore, $\sgn(x^*) = 1=-\sgn(a_1)$.

Suppose now that $a_1>0$.  Through similar steps, we get $-x_1>x_2>0$, or $x_1<-x_2<0$. Since $f'''(x)=24x<0$ for $x<0$, $f''$ is decreasing in $(-\infty,0)$ so $f''(x_1)>f''(-x_2)=f''(x_2)$.  Then, by Lemma~\ref{curvature}, $f(x_1)<f(x_2)$ and hence $x^*=x_1<0$ is the global minimizer.  Therefore, $\sgn(x^*) = -1=-\sgn(a_1)$, completing the proof.
\end{proof}

\subsection{An algorithm for global minimization of quartic polynomials}

In this section we consider the specific case of applying Algorithm~\ref{algo1} to quartic polynomials.

\begin{proposition}  \label{lem:mu_univar}
If $f(x)$ is a univariate quartic monic polynomial, then $\mu(x,t)$, as in \eqref{mf-func}, can be written as
\begin{equation}  \label{eq:mu_univar}
\mu(x,t) = f(x) + \frac{t^2}{6}\,f''(x) + \frac{t^4}{5}\,.
\end{equation}
\end{proposition}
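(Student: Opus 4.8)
The plan is to reduce the defining integral to a symmetric window and exploit the fact that a monic quartic coincides with its own Taylor polynomial. First I would substitute $\tau = x + s$ in \eqref{mf-func}, which recentres the interval of integration and gives
\[
\mu(x,t) = \frac{1}{2t}\int_{-t}^{t} f(x+s)\,ds\,.
\]
Because $f$ has degree four, Taylor's formula about the point $x$ is \emph{exact}, with no remainder term:
\[
f(x+s) = f(x) + f'(x)\,s + \frac{f''(x)}{2}\,s^2 + \frac{f'''(x)}{6}\,s^3 + \frac{f^{(4)}(x)}{24}\,s^4\,.
\]
The key simplification is that monic quartics have leading coefficient $1$, so $f^{(4)}(x)\equiv 24$ and the coefficient of $s^4$ is exactly $1$, independently of $x$ and of the lower-order coefficients of $f$.

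Next I would integrate this expansion term by term over the symmetric interval $[-t,t]$. The odd-power monomials $s$ and $s^3$ have vanishing integrals by symmetry, so only three terms survive. Using $\int_{-t}^{t} ds = 2t$, $\int_{-t}^{t} s^2\,ds = 2t^3/3$ and $\int_{-t}^{t} s^4\,ds = 2t^5/5$, the integral collapses to
\[
\int_{-t}^{t} f(x+s)\,ds = 2t\,f(x) + \frac{t^3}{3}\,f''(x) + \frac{2t^5}{5}\,.
\]
Dividing by $2t$ then yields precisely \eqref{eq:mu_univar}, namely $\mu(x,t) = f(x) + (t^2/6)\,f''(x) + t^4/5$.

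There is essentially no obstacle here: the statement is a direct computation, and the only two observations one must make are the vanishing of the odd moments over the symmetric window and the constancy of $f^{(4)}$ for a monic quartic. If one prefers to bypass the exactness of the Taylor expansion, the same identity follows by writing out the general monic quartic explicitly and integrating its antiderivative; however, the Taylor route is cleaner and makes transparent why the three surviving contributions are exactly $f(x)$, a multiple of $f''(x)$, and a constant depending only on $t$.
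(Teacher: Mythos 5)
Your proof is correct. The paper's own proof is the brute-force version you mention in passing: it writes $f(x)=x^4+a_3x^3+a_2x^2+a_1x+a_0$, substitutes into \eqref{mf-func}, integrates the antiderivative over $[x-t,x+t]$, and expands and rearranges until \eqref{eq:mu_univar} appears. Your route---recentring to $\int_{-t}^{t}f(x+s)\,ds$, using the exact degree-four Taylor expansion, killing the odd moments by symmetry, and noting $f^{(4)}\equiv 24$ for a monic quartic---arrives at the same identity with less bookkeeping and makes the structure of the answer transparent: the surviving terms are forced to be $f(x)$, a multiple of $f''(x)$, and an $x$-independent constant. It also generalizes immediately, giving $\mu(x,t)=\sum_{k\ge 0}\frac{t^{2k}}{(2k+1)!}\,f^{(2k)}(x)$ for any polynomial, of which \eqref{eq:mu_univar} is the quartic case. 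Either argument is a complete proof; yours is the cleaner one.
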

\begin{proof}
Let $f(x) := x^4 + a_3\,x^3 + a_2\,x^2 + a_1\,x + a_0$, where $a_0,a_1,a_2$ and $a_3$ are real numbers.  Substitution of $f$ into \eqref{mf-func}, followed by straightforward integration, expanding and rearranging, yield \eqref{eq:mu_univar}.
\end{proof}

Using \eqref{eq:mu_univar}, $\mu(x,t)$ and its derivatives can now be re-written for monic depressed polynomials as follows.
\begin{eqnarray}
\mu(x,t) &=& f(x) + \frac{t^2}{6}\,f''(x) + \frac{t^4}{5} 
= x^4 + (a_2 + 2\,t^2)\,x^2 + a_1\,x + a_0 + \frac{t^2}{3} +
             \frac{t^4}{5}\,,  \label{mu}  \\[1mm]
\mu_x(x,t) &=& f'(x) + \frac{t^2}{6}\,f'''(x) = 4\,x^3 + 2\,(a_2 +
               2\,t^2)\,x + a_1\,,  \\[1mm] 
\mu_{xx}(x,t) &=& f''(x) + 4\,t^2 = 12\,x^2 + 2\,(a_2 + 2\,t^2)\,,
\label{muxx_quartic}  \\[1mm] 
\mu_{tx}(x,t) &=& \frac{t}{3}\,f'''(x) = 8\,t\,x\,.  \label{mutx_quartic}
\end{eqnarray}

In Step 1 of Algorithm~\ref{algo1}, we need to find (i) some $t_0>0$ such that $\mu(\cdot,t_0)$ is convex, and (ii)~the point $x_0$ which is the global minimizer of $\mu(\cdot,t_0)$. The next lemma finds these for the particular case of quartic polynomials.

\begin{lemma}[Convexification of Quartic Polynomials] \label{t0_x0} Given any monic depressed quartic polynomial $f(x)$ with $a_1\neq0$ and $a_2<0$, $\mu(\cdot,t_0)$ is convex if
\[
t_0:=\sqrt{-a_2/2}\,.
\]
The unique (global) minimizer of $\mu(\cdot,t_0)$ is
\[
x_0 := -\sqrt[3]{a_1/4}\,.
\]
\end{lemma}
\begin{proof}

For convexity of $\mu(\cdot,t_0)$, we need to have
\[
\mu_{xx}(x,t_0) = 12\,x^2 + 2\,(a_2 + 2\,t_0^2) \ge 0\,,\quad\forall x\in\dR,
\]
which immediately follows if $t_0=\sqrt{-a_2/2}$.  The (global) minimizer $x_0$ of $\mu(\cdot,t_0)$ would then be found by solving 
\begin{equation}  \label{initialeq}
\mu_x(x_0,t_0) = 4\,x_0^3 + 2\,(a_2 + 2\,t_0^2)\,x_0 + a_1 = 0\,,
\end{equation}
or, with $t_0=\sqrt{-a_2/2}$,
\[
4\,x_0^3 + a_1 = 0\,,
\]
for $x_0$. Since $a_1\not=0$, we have $x_0\not=0$. This implies that $\mu_{xx}(x_0,t_0)>0$, and hence $\mu(\cdot,t_0)$ is strictly convex around $x_0$, which implies that $x_0$ is the unique minimizer. This completes the proof.
\end{proof}

For the special case of monic depressed quartic polynomials,
Algorithm~\ref{algo1} reduces to the following, using Lemma~\ref{t0_x0},
\eqref{muxx_quartic} and \eqref{mutx_quartic}.

\begin{algorithm} \label{algo2} \
\begin{description}
\vspace*{-3mm}
\item[Step \boldmath{$1$}] Given the quartic polynomial in \eqref{quartic}, let $t_0 = \sqrt{-a_2/2}$\ \ and\ \ 
  $x_0 = -\sqrt[3]{a_1/4}$\,.
\item[Step \boldmath{$2$}] Solve the initial value problem
\begin{equation}  \label{ODE_valley_quartic}
\dot{x}(t) = -\frac{\ds 4\,t\,x(t)}{6\,x^2(t) + 2\,t^2 + a_2}\,,
\quad\mbox{ for a.e. } t\in[0,t_0]\,,\quad\mbox{with }  
x(t_0) = x_0\,.
\end{equation}
\item[Step \boldmath{$3$}] Report $x(0)$ as the global minimizer of
  $f(x)$.
\end{description}
\end{algorithm}

By \eqref{grad} and \eqref{ODE_valley}, IVP \eqref{ODE_valley_quartic} can be derived under the assumption that
$\mu_x(x(t),t) = 0$ and $\mu_{xx}(x(t),t) \not= 0$ for a.e. $t\in [0,t_0]$. Hence, it is worth investigating if, at all, the denominator of the right-hand side of the ODE in~\eqref{ODE_valley_quartic} vanishes, i.e., $\mu_{xx}(x(t),t) = 0$ for some $t\in[0,t_0]$.  Since a solution $x(t)$ of the ODE in~\eqref{ODE_valley_quartic} satisfies $\mu_x(x(t),t) = 0$, the pathological situation happens at points $(x,t)$ which satisfy the equations $\mu_x(x,t) = 0$ and $\mu_{xx}(x,t) = 0$ simultaneously. We investigate this situation in the following lemma.

\newpage
\begin{lemma}[Flatness] \label{flatness} 
Let  $f$ be a monic depressed quartic polynomial. Consider solutions $(x,t)\in \dR\times [0,\infty)$ of the system 
\begin{equation} \label{system}
\mu_x(x,t) = 0\qquad\mbox{and}\qquad \mu_{xx}(x,t)=0\,.
\end{equation}
\begin{itemize}
\item[(a)] If $a_2> -3\,a_1^{2/3}/2$ then the system in~\eqref{system} has no solution. 
\item[(b)] If $a_2\le  -3\,a_1^{2/3}/2$ then the system in~\eqref{system} has a unique solution $(\widehat{x},\widehat{t})\in  \dR\times [0,\infty)$ such that
\begin{eqnarray}  
\widehat{x} &:=& \frac{1}{2}\,\sqrt[3]{a_1}\,,  \label{x_valley_end}  \\
\widehat{t} &:=& \frac{1}{2}\,\sqrt{-\left(3\,a_1^{2/3} + 2\,a_2\right)}\,.  \label{t_valley_end} 
\end{eqnarray}
\end{itemize} 
\end{lemma}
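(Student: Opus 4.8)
The plan is to treat the system in \eqref{system} as two polynomial equations in the unknowns $x$ and $t$ and to eliminate $t$ by substitution, using the explicit expressions for $\mu_x$ and $\mu_{xx}$ recorded in the equations preceding Lemma~\ref{t0_x0}. The system reads
\begin{equation*}
4\,x^3 + 2\,(a_2 + 2\,t^2)\,x + a_1 = 0 \qquad\mbox{and}\qquad 12\,x^2 + 2\,(a_2+2\,t^2) = 0\,.
\end{equation*}
The crucial observation is that the second equation immediately yields $a_2 + 2\,t^2 = -6\,x^2$, which expresses the combination $a_2 + 2\,t^2$ entirely in terms of $x$.

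Next I would substitute this relation into the first equation to eliminate $t$. Replacing $2\,(a_2+2\,t^2)$ by $-12\,x^2$ in the cubic gives $4\,x^3 - 12\,x^3 + a_1 = 0$, that is, $-8\,x^3 + a_1 = 0$. Since the real cube root is a bijection on $\dR$, this determines $x$ uniquely as $\widehat{x} = \tfrac12\sqrt[3]{a_1}$, which is \eqref{x_valley_end}. Back-substituting $\widehat{x}$ into $a_2 + 2\,t^2 = -6\,x^2$ and using $\widehat{x}^2 = \tfrac14\,a_1^{2/3}$ gives $2\,t^2 = -\tfrac32\,a_1^{2/3} - a_2$, hence $t^2 = -\tfrac14\,(3\,a_1^{2/3} + 2\,a_2)$.

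The case split then falls out of the sign of this right-hand side. A real $t$ exists precisely when $t^2 \ge 0$, i.e. when $3\,a_1^{2/3} + 2\,a_2 \le 0$, which is exactly the condition $a_2 \le -3\,a_1^{2/3}/2$ of part~(b); in that case the non-negative root $\widehat{t} = \tfrac12\sqrt{-(3\,a_1^{2/3}+2\,a_2)}$ is uniquely determined, giving \eqref{t_valley_end} together with the uniqueness of the pair $(\widehat{x},\widehat{t})$. Conversely, if $a_2 > -3\,a_1^{2/3}/2$ then $t^2 < 0$, so no solution with $t \in [0,\infty)$ can exist, which is part~(a).

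The computation itself is routine and I do not expect a genuine obstacle; the only point requiring care is the interpretation of the fractional powers of $a_1$, since $a_1$ may be negative. I would fix the convention that $\sqrt[3]{a_1}$ denotes the real cube root (sharing the sign of $a_1$) and $a_1^{2/3} = (\sqrt[3]{a_1})^2 = |a_1|^{2/3} > 0$, so that the identity $\widehat{x}^2 = \tfrac14\,a_1^{2/3}$ and the sign analysis of $t^2$ remain valid regardless of the sign of $a_1$. With this convention the elimination closes cleanly and both parts follow.
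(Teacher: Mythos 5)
Your proposal is correct and follows essentially the same route as the paper: substitute the relation $2\,(a_2+2\,t^2)=-12\,x^2$ from the second equation into the first to pin down $\widehat{x}=\tfrac12\sqrt[3]{a_1}$, then back-substitute to obtain $t^2=-\tfrac14\,(3\,a_1^{2/3}+2\,a_2)$ and split on the sign. Your explicit remark about fixing the real-cube-root convention for negative $a_1$ is a sensible addition but does not change the argument.
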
 
\begin{proof}
Start with
\begin{eqnarray}
\mu_x(\widehat{x},\widehat{t}) &=&
4\,\widehat{x}^3 + 2\,(a_2 + 2\,\widehat{t}^2)\,\widehat{x} + a_1 = 0\,, \label{mu_x} \\ 
\mu_{xx}(\widehat{x},\widehat{t}) &=& 12\,\widehat{x}^2 + 2\,(a_2 + 2\,\widehat{t}^2) = 0\,.  \label{mu_xx} 
\end{eqnarray}
Using \eqref{mu_xx} in \eqref{mu_x} gives $ \widehat{x}=\frac{1}{2}\,\sqrt[3]{a_1}$. Using this value of $\widehat{x}$ in \eqref{mu_xx} gives
\begin{equation}\label{q1}
0= 6\,\left(\frac{1}{2}\,\sqrt[3]{a_1}\right)^2 + (a_2 + 2\,\widehat{t}^2) = \frac{3}{2} a_1^{2/3}+a_2+2 \,\widehat{t}^2.
\end{equation}
To prove part (a), note that $a_2> -3\,a_1^{2/3}/2$ if and only if 
\[
0= \frac{3}{2} a_1^{2/3}+a_2+2 \,\widehat{t}^2 > 2 \,\widehat{t}^2,
\]
which entails a contradiction and therefore implies that the system has no solution. This proves (a). On the other hand, $a_2\le -3\,a_1^{2/3}/2$ if and only if there is a unique nonnegative solution of \eqref{q1}, given by \eqref{t_valley_end}. This proves part (b). 
\end{proof}

We will consider in our analysis a notion which is weaker than convexity, called {\em quasi-convexity}.

\begin{definition} \rm
The function $f:\mathbb{R}^n\to \mathbb{R}$ is said to be {\em quasi-convex} when all its level sets are convex, i.e., when for every $\alpha\in \mathbb{R}$ we have that the set $\{x\in \mathbb{R}^n\::\: f(x)\le \alpha\}$ is convex.
\end{definition}

\begin{definition} \rm
Let $I$ be a (possibly infinite) interval in $\mathbb{R}$. Recall that $f:\mathbb{R}\to \mathbb{R}$ is {\em non-increasing in} $I$ if for all $x,y\in I$ such that $x<y$ we have $f(x)\ge f(y)$. Similarly, $f$ is {\em non-decreasing in} $I$ if for all $x,y\in I$ such that $x<y$ we have $f(x)\le f(y)$.
\end{definition}

The following result is a trivial re-statement of Theorem 4.9.11 in \cite{SW}.
\begin{theorem}[Quasi-convexity] \label{thm:Stoer} 
A function $f:\mathbb{R}\to \mathbb{R}$ is quasi-convex if and only if there exists $m\in [-\infty,+\infty]$ such that $f$ is
non-increasing in $(-\infty,m]$ and non-decreasing in  $[m, +\infty)$.
\end{theorem}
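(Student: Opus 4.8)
The plan is to reduce the statement to the elementary fact that the convex subsets of $\dR$ are exactly the intervals, so that $f$ is quasi-convex precisely when every sublevel set $S_\alpha:=\{x\in\dR : f(x)\le\alpha\}$ is an interval; I would then prove each implication from this reformulation. For the ``if'' direction, assume such an $m$ exists and fix $\alpha\in\dR$; to show $S_\alpha$ is an interval it is enough to take $x<y<z$ with $x,z\in S_\alpha$ and deduce $y\in S_\alpha$. I would argue by the position of $y$ relative to $m$: if $y\le m$ then $x<y$ both lie in $(-\infty,m]$, where $f$ is non-increasing, so $f(y)\le f(x)\le\alpha$; if instead $y\ge m$ then $y<z$ both lie in $[m,+\infty)$, where $f$ is non-decreasing, so $f(y)\le f(z)\le\alpha$. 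One of the two always applies (the two ranges meet at $m$, and the extreme choices $m=\pm\infty$ simply mean $f$ is globally monotone), and neither step uses any regularity of $f$.

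For the ``only if'' direction, assume $f$ is quasi-convex and set $v:=\inf_{\dR}f\in[-\infty,+\infty)$; the task is to locate the turning point $m$. I would treat first the case in which the infimum is attained, say $f(\bar x)=v$, and take $m:=\bar x$. To check that $f$ is non-increasing on $(-\infty,m]$, fix $x<y\le m$ and suppose, towards a contradiction, that $f(x)<f(y)$; putting $\alpha:=f(x)$ we have $x\in S_\alpha$ and $m\in S_\alpha$ (since $f(m)=v\le f(x)$), while $x<y<m$, so the interval $S_\alpha$ must contain $y$, giving $f(y)\le\alpha=f(x)$, a contradiction. The symmetric argument on $[m,+\infty)$ shows $f$ is non-decreasing there. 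This sub-case again needs only the interval property.

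The remaining case, in which $v$ is not attained (in particular whenever $v=-\infty$), is where I expect the real work to lie. Here I would exploit the nested family of nonempty intervals $S_\alpha$ for $\alpha>v$, writing $c_\alpha:=\inf S_\alpha$ and $d_\alpha:=\sup S_\alpha$ in $[-\infty,+\infty]$; as $\alpha$ decreases to $v$ the endpoints move monotonically to limits with $c^*\le d^*$, and since $\bigcap_{\alpha>v}S_\alpha=\{x:f(x)=v\}=\emptyset$ any point of $[c^*,d^*]\cap\dR$ would have to lie in every $S_\alpha$, which is impossible. This forces $c^*=d^*=+\infty$ or $c^*=d^*=-\infty$; in the first case the sublevel sets slide off to $+\infty$ and $f$ is non-increasing, so $m=+\infty$ works, and in the second case $f$ is non-decreasing and $m=-\infty$ works.

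The delicate point, and the one I would flag explicitly, is that this last case relies on the sublevel sets being closed, i.e.\ on lower semicontinuity of $f$; this is exactly the hypothesis that is automatically met by the continuous functions to which the theorem is applied in this paper. It cannot be dropped in general: the quasi-convex but non-lower-semicontinuous function equal to $-x$ for $x<0$ and to $1+x$ for $x\ge 0$ admits no $m$ whatsoever, since an admissible $m$ would have to satisfy $m\ge 0$ (to keep $f$ non-decreasing past $m$) and $m<0$ (to keep $f$ non-increasing up to $m$). With continuity in force, however, the three cases above are exhaustive and yield the characterization, in agreement with the cited Theorem~4.9.11 of \cite{SW}.
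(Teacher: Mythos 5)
The paper offers no proof of this theorem: it is dismissed as ``a trivial re-statement of Theorem~4.9.11 in \cite{SW}'', so your self-contained argument is by construction a different route, and it is essentially correct. The reduction to ``every sublevel set is an interval'', the three-point argument for the ``if'' direction, and the attained-infimum case of the ``only if'' direction are clean and require no regularity of $f$ whatsoever. Your treatment of the unattained-infimum case via the endpoints $c_\alpha=\inf S_\alpha$, $d_\alpha=\sup S_\alpha$ is also sound once the sublevel sets are closed, and the caveat you flag is genuine rather than cosmetic: your example ($f(x)=-x$ for $x<0$, $f(x)=1+x$ for $x\ge 0$) really does have every sublevel set convex (namely $[-\alpha,0)$ for $0<\alpha<1$ and $[-\alpha,\alpha-1]$ for $\alpha\ge 1$) while admitting no admissible $m$, so the theorem as literally stated for an arbitrary $f:\dR\to\dR$ is false, and some form of lower semicontinuity must be implicit in the source. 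This does not damage the paper, since Lemma~\ref{f: quasi-convex} and Proposition~\ref{mu:quasi-convex} apply the theorem only to polynomials, for which your proof applies verbatim; but it means your write-up both supplies the missing argument and surfaces a hidden hypothesis that the paper's bare citation conceals. The only thing the paper's approach buys in exchange is brevity.
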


\begin{remark}  \rm
When $m$ is infinite in Theorem \ref{thm:Stoer}, one of the intervals is empty and this covers the case in which the function is everywhere non-increasing or everywhere non-decreasing. Functions as in the statement of Theorem \ref{thm:Stoer} are sometimes called {\em unimodal}.
\end{remark}

\begin{lemma}[Quasi-convexity of a Quartic Polynomial] \label{f: quasi-convex} 
Let $h$ be a monic depressed quartic polynomial given by $h(x):=x^4 +b_2x^2+b_1 x +b_0$. Consider $\Delta := -16\,[8\,b_2^3 + 27\,b_1^2]$, i.e., $\Delta$ is the {\em discriminant} of $h'(x) = 4\,x^3 + 2\,b_2\,x + b_1$. Then $h$ is quasi-convex if and only if  $\Delta\le 0$. In this situation, we have $b_2 \ge -3\,b_1^{2/3}/2$.
\end{lemma}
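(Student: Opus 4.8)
The plan is to characterize quasi-convexity of $h$ through the sign pattern of its derivative $h'(x)=4x^3+2b_2x+b_1$ and then to read that sign pattern off the discriminant $\Delta$. By Theorem~\ref{thm:Stoer}, $h$ is quasi-convex if and only if there is some $m\in[-\infty,+\infty]$ with $h$ non-increasing on $(-\infty,m]$ and non-decreasing on $[m,+\infty)$. Since $h$ is continuously differentiable, I would first recast this as the requirement that $h'(x)\le 0$ for $x<m$ and $h'(x)\ge 0$ for $x>m$; that is, $h'$ exhibits a single sign change, from nonpositive to nonnegative. Because $h'$ is a cubic with positive leading coefficient, $h'(x)\to-\infty$ as $x\to-\infty$ and $h'(x)\to+\infty$ as $x\to+\infty$, so $h'$ always has at least one real root and its overall sign runs from negative to positive; the single-sign-change property can therefore be destroyed only by an intermediate dip, i.e.\ three distinct real roots producing the pattern $-,+,-,+$.

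Next I would record the discriminant of $h'$. A direct computation with the standard formula for a cubic gives $\Delta=-16\,(8b_2^3+27b_1^2)$, matching the stated expression. I then invoke the classical trichotomy: $\Delta>0$ corresponds to three distinct real roots, $\Delta=0$ to a repeated real root, and $\Delta<0$ to a single real root together with a complex-conjugate pair. For the forward direction (contrapositive), suppose $\Delta>0$, so $h'$ has three distinct real roots $r_1<r_2<r_3$ with sign pattern $-,+,-,+$. Then $h$ is strictly increasing on $(r_1,r_2)$ and strictly decreasing on $(r_2,r_3)$; no single $m$ can make $h$ non-increasing on $(-\infty,m]$ and non-decreasing on $[m,+\infty)$, since the former would force $m\le r_1$ while the latter would force $m\ge r_3$. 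Hence by Theorem~\ref{thm:Stoer} $h$ is not quasi-convex.

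For the reverse direction, suppose $\Delta\le 0$. If $\Delta<0$, then $h'$ has exactly one real root, at which it passes strictly from negative to positive, so $h$ decreases then increases and Theorem~\ref{thm:Stoer} applies. If $\Delta=0$, then $h'$ has a repeated root; here $h'$ either touches zero without changing sign at the double root and crosses once at the simple root, or has a triple root at the origin (when $b_1=0$, which forces $b_2=0$). In every case $h'$ retains a single transition from $\le 0$ to $\ge 0$, consistent with the flatness point $\widehat{x}=\frac12\sqrt[3]{b_1}$ of Lemma~\ref{flatness}, so $h$ is again quasi-convex. This establishes the equivalence.

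Finally, the inequality $b_2\ge-3\,b_1^{2/3}/2$ follows by an elementary manipulation that is in fact equivalent to $\Delta\le 0$: from $\Delta\le 0$ we obtain $8b_2^3+27b_1^2\ge 0$, i.e.\ $(2b_2)^3\ge(-3b_1^{2/3})^3$, and taking cube roots (using that $t\mapsto t^3$ is strictly increasing on $\dR$) gives $2b_2\ge-3b_1^{2/3}$, as claimed. The step I expect to be the main obstacle is pinning down the $\Delta>0$ case rigorously, namely confirming that three distinct real roots genuinely force the incompatible monotonicity behaviour above; the boundary case $\Delta=0$ also needs care to verify that the derivative merely grazes zero at the double root rather than crossing it, which is what preserves quasi-convexity there.
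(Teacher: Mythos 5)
Your proposal is correct and follows essentially the same route as the paper: both arguments hinge on the classical discriminant trichotomy for the cubic $h'$ together with the unimodality characterization of Theorem~\ref{thm:Stoer}, ruling out the three-distinct-roots case and verifying unimodality in the $\Delta<0$ and $\Delta=0$ cases (double versus triple root). Your cube-root derivation of $b_2\ge -3\,b_1^{2/3}/2$ merely spells out a step the paper leaves as ``follows directly from the expression of the discriminant.''
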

\begin{proof}
From algebra of cubic equations we have that
\begin{eqnarray*}
(I) && \Delta > 0 \Longrightarrow h' \hbox{ has three distinct real roots},\\
(II) && \Delta = 0 \Longrightarrow h' \hbox{ has a multiple root and all its roots are real},\\
(III) && \Delta < 0 \Longrightarrow h' \hbox{ has one real root and two non-real complex conjugate roots},
\end{eqnarray*}
Case (I) implies that $h$ has a local maximum, and two local minima. This cannot hold for a quasi-convex function in view of Theorem \ref{thm:Stoer}. Hence it is enough to show that the other two cases imply that $h$ satisfies the unimodality property described in Theorem \ref{thm:Stoer}.  This is clear in case (III), since by coercivity the unique real root of $h'$ must be a global minimum. So in case (III) $h$ is quasi-convex by Theorem \ref{thm:Stoer}. In Case (II), we have two possibilities: either all three roots coincide (i.e., we have a triple root of $h'$) or one of the real roots is double. The case in which we have a triple root of $h'$ implies that $h'(x)=4(x-x_0)^3$ and hence is strictly increasing. So $h$ is convex, and hence quasi-convex.  We are left only with the case of a double real root $x_0$ and a simple real root $x_1$. In this case, $h'(x)=4(x-x_1)(x-x_0)^2$ and it is clear that $h'(x)\le 0$ for $x\le x_1$ and $h'(x)\ge 0$ for $x\ge x_1$. This implies directly (using mean value theorem) that $h$ verifies the unimodality property given in Theorem \ref{thm:Stoer} with $m=x_1$, and hence $h$ is quasi-convex. We have shown that $\Delta \le 0$ implies $h$ quasi-convex. Conversely, assume that $h$ is quasi-convex, and let $m$ be as in Theorem \ref{thm:Stoer}. Since $h$ is coercive, we must have $m\in\mathbb{R}$. Because $h$ is a polynomial, $h$ cannot be constant in any interval, so $m$ must be the only global minimum of $h$. This implies that $h'$ cannot have three different roots, so we cannot be in Case (I) and hence we must have $\Delta \le 0$. The last statement of the lemma follows directly from the expression of the discriminant.
\end{proof}

\begin{proposition} \label{mu:quasi-convex} 
Suppose that $f$ is a monic depressed quartic polynomial.  Then $\mu(\cdot,t)$ is quasi-convex if, and only if,
\begin{equation}  \label{t_condition}
t \ge \frac{1}{2}\,\sqrt{  \max\left\{0,-\left(3\,a_1^{2/3} + 2\,a_2\right)\right\}}\,.
\end{equation}
\end{proposition}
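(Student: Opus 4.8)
The plan is to exploit the fact that, for each fixed $t$, the Steklov function $\mu(\cdot,t)$ is itself a monic depressed quartic polynomial, and then to invoke the discriminant characterization of quasi-convexity from Lemma~\ref{f: quasi-convex}. Reading off the coefficients from \eqref{mu}, we have $\mu(x,t) = x^4 + b_2\,x^2 + b_1\,x + b_0$ with
\[
b_2 = a_2 + 2\,t^2\,,\qquad b_1 = a_1\,,\qquad b_0 = a_0 + \frac{t^2}{3} + \frac{t^4}{5}\,.
\]
In particular $\mu(\cdot,t)$ is monic and has no cubic term, so Lemma~\ref{f: quasi-convex} applies verbatim. Since only $b_2$ and $b_1$ enter the discriminant, that lemma says $\mu(\cdot,t)$ is quasi-convex if and only if $8\,b_2^3 + 27\,b_1^2 \ge 0$, i.e., if and only if
\[
8\,(a_2 + 2\,t^2)^3 + 27\,a_1^2 \ge 0\,.
\]

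The second step is to convert this polynomial inequality in $t$ into the stated bound. Rearranging gives $(a_2 + 2\,t^2)^3 \ge -27\,a_1^2/8$. Because the real cube root is strictly increasing on all of $\dR$ (this is what keeps the equivalence bidirectional), and $(27\,a_1^2/8)^{1/3} = \tfrac{3}{2}\,a_1^{2/3}$, where $a_1^{2/3} := (a_1^2)^{1/3} > 0$ since $a_1 \neq 0$, the inequality is equivalent to
\[
a_2 + 2\,t^2 \ge -\tfrac{3}{2}\,a_1^{2/3}\,,\qquad\text{that is}\qquad 2\,t^2 \ge -\bigl(\tfrac{3}{2}\,a_1^{2/3} + a_2\bigr)\,.
\]

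The final step is to solve this for admissible $t>0$ and recover the $\max$. If $3\,a_1^{2/3} + 2\,a_2 \ge 0$, the right-hand side is nonpositive, the inequality holds for every $t$, and this matches the bound $t \ge \tfrac{1}{2}\sqrt{0}=0$. If instead $3\,a_1^{2/3} + 2\,a_2 < 0$, the right-hand side is positive and the inequality is equivalent to $t^2 \ge -\tfrac{1}{4}(3\,a_1^{2/3} + 2\,a_2)$, i.e.\ $t \ge \tfrac{1}{2}\sqrt{-(3\,a_1^{2/3} + 2\,a_2)}$. Combining the two cases under a single $\max\{0,\cdot\}$ yields precisely \eqref{t_condition}. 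As a consistency check I note that this threshold coincides with the value $\widehat{t}$ computed in Lemma~\ref{flatness}, which is reassuring since that is exactly the smallest $t$ beyond which $\mu_{xx}(\cdot,t)$ no longer vanishes.

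There is no deep obstacle here: once the reduction to a depressed quartic and the appeal to Lemma~\ref{f: quasi-convex} are in place, the rest is elementary algebra. The only points that demand care are the correct identification of $b_2 = a_2 + 2\,t^2$ from \eqref{mu}, the monotonicity of the real cube root (needed for the equivalence to run in both directions), and the bookkeeping that turns the ``vacuous versus binding'' dichotomy into the single $\max\{0,\cdot\}$ expression.
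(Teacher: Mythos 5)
Your proposal is correct and follows essentially the same route as the paper: identify $\mu(\cdot,t)$ as a monic depressed quartic, apply the discriminant criterion of Lemma~\ref{f: quasi-convex}, and rearrange $8(a_2+2t^2)^3 + 27a_1^2 \ge 0$ into \eqref{t_condition}. The paper states the rearrangement without detail, whereas you spell out the cube-root monotonicity and the case split behind the $\max\{0,\cdot\}$; this is just an elaboration, not a different argument.
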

\begin{proof}
By \eqref{mu}, $\mu(\cdot,t)$ is a monic quartic depressed polynomial with $\mu_x(x,t) = 4\,x^3 + 2\,(a_2 + 2\,t^2)\,x + a_1$.  By Lemma~\ref{f: quasi-convex} applied to $\mu(\cdot,t)$, we have that $\mu(\cdot,t)$ is quasi-convex if, and only if, the discriminant of $\mu_x(\cdot,t)$ is non-positive, i.e.,
\[
\Delta = -16 \left[8\,(a_2 + 2\,t^2)^3 + 27\,a_1^2\right] \le 0\,,
\]
a re-arrangement of which yields \eqref{t_condition}.
\end{proof}

\begin{remark}\rm
Lemmas  \ref{flatness} and \ref{f: quasi-convex} imply that the right-hand-side of the ODE in~\eqref{ODE_valley_quartic} cannot be discontinuous in the interior of $[0,t_0]$ when $f$ is a quasi-convex quartic polynomial. Indeed, assume that the right-hand-side of the ODE in \eqref{ODE_valley_quartic} is discontinuous in the interior of $[0,t_0]$ and $f$ is quasi-convex. By Lemma \ref{flatness}, this implies that the system \eqref{x_valley_end}--\eqref{t_valley_end} has a solution in $\dR\times (0,t_0)$. By part (b) of the lemma this yields $(3\,a_1^{2/3} + 2\,a_2)\le 0$. On the other hand, by Lemma \ref{f: quasi-convex}, $f$ is quasi-convex if and only if $(3\,a_1^{2/3} + 2\,a_2)\ge 0$. This yields $(3\,a_1^{2/3} + 2\,a_2)= 0$. Using \eqref{t_valley_end} gives $\widehat{t}=0$ as the unique solution of the system. Since the nonnegative solution is unique, this implies that there is no solution for $t$ in the interior of $(0,t_0)$, and hence the denominator in the right-hand side of the ODE in  \eqref{ODE_valley_quartic} cannot vanish for $t\in (0,t_0)$.
\end{remark}

\subsection{Well-definedness of Algorithm~\ref{algo2}}

\begin{lemma}[Solutions of ODEs] \label{lem:solvability} 
Consider a monic depressed quartic polynomial $f$, with $a_2<0$ and $a_1\neq0$. Let $t_0 = \sqrt{-a_2/2}$\ \ and\ \ $x_0 = -\sqrt[3]{a_1/4}$\,.  The following hold.
\begin{enumerate}
\item[(a)] There exists $r>0$ such that there is a unique solution $x(\cdot)$ of \eqref{ODE_valley_quartic} in $(t_0-r,t_0+r)$.
\item[(b)] There exists a maximal interval to the left of $t_0$, say $(m_0,t_0]$, such that there exists a solution of
  \eqref{ODE_valley_quartic} in $(m_0,t_0]$.
\item[(c)] Either $m_0 = -\infty$, or $m_0\in \mathbb{R}$ and in this case we must have $\mu_{xx}(x(m_0),m_0) = 0$.
\end{enumerate}
\end{lemma}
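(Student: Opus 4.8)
The plan is to write the IVP~\eqref{ODE_valley_quartic} as $\dot x = F(x,t)$ with $F(x,t) := -4tx/(6x^2 + 2t^2 + a_2)$ and to read off its regularity directly from the denominator, which by \eqref{muxx_quartic} equals $\tfrac12\mu_{xx}(x,t)$ (while the numerator equals $\tfrac12\mu_{tx}(x,t)$ by \eqref{mutx_quartic}). Thus $F$ is a rational function, hence real-analytic and in particular locally Lipschitz, on the open set $D := \{(x,t) : \mu_{xx}(x,t)\neq 0\}$, and the whole lemma reduces to standard continuation theory once the location of the trajectory relative to $D$ is controlled. For part~(a), I would first verify that the initial datum lies in $D$: since $t_0 = \sqrt{-a_2/2}$ gives $4t_0^2 = -2a_2$, \eqref{muxx_quartic} yields $\mu_{xx}(x_0,t_0) = 12x_0^2 + 2a_2 + 4t_0^2 = 12x_0^2 > 0$, the strict inequality holding because $x_0 = -\sqrt[3]{a_1/4}\neq 0$ (as $a_1\neq 0$). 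Hence $(x_0,t_0)\in D$, and Picard--Lindel\"of gives a unique solution on some interval $(t_0-r, t_0+r)$.

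For part~(b), I would invoke the standard theory of maximal solutions: take the union of all intervals $(c,t_0]$ on which a solution of the IVP exists (nonempty by~(a)); by the local uniqueness coming from the local Lipschitz property of $F$ on $D$, any two such solutions agree on their overlap, so they paste together into a single maximal solution defined on $(m_0,t_0]$, where $m_0 := \inf c \in [-\infty, t_0)$.

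The heart of the argument is part~(c), and this is where the quartic structure does the work. The key preliminary observation is that $\mu_x(x(t),t)\equiv 0$ is preserved along the trajectory: it holds at $t_0$ by Lemma~\ref{t0_x0}, and $\frac{d}{dt}\mu_x(x(t),t) = \mu_{xx}\dot x + \mu_{tx} = 0$ by the ODE. Consequently $x(t)$ is, for every $t$, a root of the cubic $\mu_x(\cdot,t) = 4x^3 + 2(a_2+2t^2)x + a_1$. If $m_0\in\mathbb{R}$, then on the compact interval $[m_0,t_0]$ the coefficient $a_2+2t^2$ is bounded, so all roots of this cubic, and in particular $x(t)$, stay in a fixed bounded set, ruling out blow-up. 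The one-sided limit set $\bigcap_{\delta>0}\overline{x((m_0,m_0+\delta))}$ is therefore a nonempty connected subset of the finite root set of $\mu_x(\cdot,m_0)$, hence a single point; so $x(m_0):=\lim_{t\to m_0^+}x(t)$ exists and satisfies $\mu_x(x(m_0),m_0)=0$ by continuity. To finish, I would argue by contradiction: if $\mu_{xx}(x(m_0),m_0)\neq 0$, then $(x(m_0),m_0)\in D$, and applying local existence/uniqueness at this point (exactly as in~(a)) continues $x$ across $m_0$, contradicting the maximality of $(m_0,t_0]$. Hence $\mu_{xx}(x(m_0),m_0)=0$; combined with $\mu_x(x(m_0),m_0)=0$ and Lemma~\ref{flatness}, this moreover identifies $(x(m_0),m_0)=(\widehat{x},\widehat{t})$.

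The main obstacle I anticipate is establishing the existence of $\lim_{t\to m_0^+}x(t)$ in part~(c): a priori a bounded trajectory could oscillate, and this must be excluded. The device for doing so is precisely the invariance $\mu_x(x(t),t)\equiv 0$, which confines $x(t)$ to the finite root set of a cubic with uniformly bounded coefficients and thereby upgrades the ``connected limit set inside a finite set'' observation into genuine convergence. Everything else is routine ODE continuation theory together with the explicit evaluation $\mu_{xx}(x_0,t_0)=12x_0^2$.
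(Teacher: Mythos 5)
Your proposal is correct and follows essentially the same route as the paper: Picard--Lindel\"of at $(x_0,t_0)$ using $\mu_{xx}(x_0,t_0)=12x_0^2>0$ for part (a), standard maximal continuation for part (b), and for part (c) the dichotomy between indefinite extension and vanishing of the denominator $6x^2(t)+2t^2+a_2$. Your treatment of (c) is in fact more careful than the paper's, which simply asserts that the finite-$m_0$ alternative corresponds to $\mu_{xx}(x(m_0),m_0)=0$ without justifying that $\lim_{t\to m_0^+}x(t)$ exists; your argument via the invariant $\mu_x(x(t),t)\equiv 0$ (confining the bounded trajectory's connected limit set to the finite root set of a cubic with bounded coefficients) supplies that implicit step.
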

\begin{proof}
Note that $x_0^2>0$ because $a_1\not=0$.
Part~(a) follows from the classical Picard-Lindel\"of existence and uniqueness theorem (see \cite{Arnold1978}), since the denominator $6\,x^2(t_0) + 2\,t_0^2 + a_2 = 6\,x_0^2 >0$. By \eqref{muxx_quartic}, this implies that $\mu_{xx}(x(t_0),t_0) > 0$, and so the right-hand side of the ODE in \eqref{ODE_valley_quartic} is Lipschitz continuous in $x$ and continuous in $t$ in a neighbourhood of $t_0$. Part~(b) is the classical result on maximal extension of solutions of ODEs. The option $m_0 = -\infty$ of part~(c) corresponds to the case in which the right-hand side remains Lipschitz continuous in $x$ for all $t<t_0$. The remaining option happens when the denominator
\begin{equation}  \label{denom}
q(t) := 6\,x^2(t) + 2\,t^2 + a_2
\end{equation}
vanishes at $t = m_0$, i.e., when $\mu_{xx}(x(m_0),m_0) = 0$. This completes the proof.
\end{proof}

The following lemma re-formulates the initial value problem in \eqref{ODE_valley_quartic}.

\begin{lemma}[Trajectory Along a Valley] \label{lem:1} 
Consider a monic depressed quartic polynomial, with $a_2<0$ and $a_1\neq0$.  Let $t_0 = \sqrt{-a_2/2}$\ \ and\ \ $x_0 = -\sqrt[3]{a_1/4}$\,. With the notation of Lemma \ref{lem:solvability}, let $x(\cdot)$ be the maximally extended solution of~\eqref{ODE_valley_quartic}, and $(m_0,t_0]$ the corresponding maximal interval.  Then, we have that
\begin{equation}\label{trajectory condition}
\mu_x(x(t),t) = 0\,,\ \ \mu_{xx}(x(t),t) > 0\,,\ \forall t\in
[m_0,t_0]\,.
\end{equation}
\end{lemma}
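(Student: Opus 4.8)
The plan is to prove the two assertions in \eqref{trajectory condition} separately: the identity $\mu_x(x(t),t)=0$ as a first integral of the ODE, and the inequality $\mu_{xx}(x(t),t)>0$ by a continuity and connectedness argument. For the first, I would set $\phi(t):=\mu_x(x(t),t)$ on the maximal interval $(m_0,t_0]$. Since $\mu$ is a polynomial and $x(\cdot)$ is differentiable wherever the right-hand side of \eqref{ODE_valley_quartic} is defined, the chain rule gives $\dot\phi(t)=\mu_{xx}(x(t),t)\,\dot x(t)+\mu_{tx}(x(t),t)$. Substituting the ODE in the form $\dot x=-\mu_{tx}/\mu_{xx}$ from \eqref{ODE_valley}, which is legitimate because $\mu_{xx}(x(t),t)\neq0$ on the maximal interval, the two terms cancel and $\dot\phi\equiv0$. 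Thus $\phi$ is constant, and $\phi(t_0)=\mu_x(x_0,t_0)=0$ by Lemma~\ref{t0_x0} (indeed $x_0=-\sqrt[3]{a_1/4}$ solves \eqref{initialeq}). Hence $\mu_x(x(t),t)=0$ for all $t\in(m_0,t_0]$, and by continuity also at $t=m_0$ when $m_0$ is finite.

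For the inequality I would argue by connectedness. The map $t\mapsto\mu_{xx}(x(t),t)=2\,q(t)$, with $q$ as in \eqref{denom}, is continuous on $(m_0,t_0]$. At the right endpoint, \eqref{muxx_quartic} together with $t_0=\sqrt{-a_2/2}$ gives $\mu_{xx}(x_0,t_0)=12\,x_0^2>0$, the strict positivity following from $x_0\neq0$ (equivalently $a_1\neq0$). By the definition of the maximal interval in Lemma~\ref{lem:solvability}, the denominator $q$ does not vanish on $(m_0,t_0]$, since a zero of $q$ is exactly where the solution ceases to exist. A continuous, nowhere-vanishing function on an interval that is positive at one point is positive everywhere, so $\mu_{xx}(x(t),t)>0$ for all $t\in(m_0,t_0]$.

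I expect the left endpoint $m_0$ to be the main obstacle. If $m_0=-\infty$, then \eqref{trajectory condition} holds throughout $(-\infty,t_0]$ and there is nothing more to check. If $m_0$ is finite, part~(c) of Lemma~\ref{lem:solvability} forces $\mu_{xx}(x(m_0),m_0)=0$, so that $(x(m_0),m_0)$ solves the flatness system \eqref{system} and hence coincides with the unique point $(\widehat x,\widehat t)$ of Lemma~\ref{flatness}; the strict inequality is then to be read on the open part $(m_0,t_0]$, while $\mu_x$ still vanishes at $m_0$ by continuity. This degeneracy at a finite $m_0$ is precisely the scenario the subsequent lemmas must rule out on $(0,t_0]$, so that $x(0)$ is a genuine, non-degenerate minimizer and Step~3 of Algorithm~\ref{algo2} is valid.
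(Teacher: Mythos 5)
Your proof is correct and follows essentially the same route as the paper's: the first integral argument $\frac{d}{dt}\mu_x(x(t),t)=0$ combined with $\mu_x(x_0,t_0)=0$ for the identity, and non-vanishing of the denominator on the maximal interval plus positivity at $t_0$ for the inequality (the paper merely does the two steps in the opposite order). Your closing observation about the endpoint is also apt: the paper's own proof only establishes $\mu_{xx}(x(t),t)>0$ on $(m_0,t_0]$ even though the statement writes $[m_0,t_0]$, a discrepancy that is ultimately harmless because Lemma~\ref{lem:3} shows $m_0=-\infty$.
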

\begin{proof}
We show first that $\mu_{xx}(x(t),t) > 0\,,\ \forall t\in
[m_0,t_0]\,$. Indeed, the choices of $x_0$ and $t_0$, together with \eqref{muxx_quartic} give $\mu_{xx}(x(t_0),t_0) > 0$. The definition of $m_0$ states that  IVP \eqref{ODE_valley_quartic} is solvable over $(m_0,t_0]$. By Lemma \ref{lem:solvability}(a), this implies that the right-hand side of the ODE is continuous on $(m_0,t_0]$. In other words, the denominator of the right-hand side of the ODE is not zero and so it does not change sign on $(m_0,t_0]$. This readily gives  
\begin{equation}  \label{pos_phi_xx}
\mu_{xx}(x(t),t) > 0\,,
\end{equation}
for all $t\in(m_0,t_0]$, as wanted. To complete the proof, recall that the ODE in \eqref{ODE_valley_quartic} is the ODE in \eqref{ODE_valley} written
for a quartic polynomial. Then, for all $t\in(m_0,t_0]$, the ODE in \eqref{ODE_valley} is equal to the expression in \eqref{grad1}, which is
\[
\dot{x}(t)\,\mu_{xx}(x(t),t) + \mu_{tx}(x(t),t) = 0\,.
\]
The above expression can in turn be expressed as 
\begin{equation} \label{eq:1}
  \frac{d}{dt}\,\mu_x(x(t),t) = 0\,.
\end{equation}
From the first step of Algorithm~\ref{algo1},
\begin{equation}  \label{eq:2}
  \mu_x(x(t_0),t_0) = 0\,.
\end{equation}
Equalities \eqref{eq:1} and \eqref{eq:2} imply that 
\begin{equation}
  \label{eq:4}
  \mu_x(x(t),t) = 0\,,
\end{equation}
for all $t\in(m_0,t_0]$. Equality \eqref{eq:4} holds at $t=m_0$ by continuity of $\mu_x$ and $x(\cdot)$. This completes the proof of the lemma.
\end{proof}

Our next step is to show that the solution $x(\cdot)$ of the initial value problem \eqref{ODE_valley_quartic} has the same sign as that of $x_0$ over its maximal domain of definition. For proving this, we need the following auxiliary result.

\begin{lemma} \label{sign-ODE}
Fix $a\in \dR\cup\{-\infty\}$ and let $b>a$. Assume that the function $x(\cdot):(a,b] \to \dR$ is continuously differentiable and satisfies
\begin{equation}\label{sign}
\sgn(\dot{x}(t))=-\sgn(x(t)),\,\quad\forall\, t\in (a,b].
\end{equation}
Assume that $x(b)\not=0$. Then, $\sgn(x(t))=\sgn(x(b))$ for all $t\in (a,b]$. If $a\in \dR$, then $\sgn(x(a))=\sgn(x(b))$.
\end{lemma}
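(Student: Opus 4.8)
The plan is to exploit the sign condition \eqref{sign} directly: where $x$ is positive the trajectory is strictly decreasing, and where it is negative it is strictly increasing, so in either case $x$ cannot cross the value $0$ as $t$ moves backward from $b$. First I would reduce to a single case. Observe that $y := -x$ is again continuously differentiable on $(a,b]$ and satisfies $\sgn(\dot y(t)) = \sgn(-\dot x(t)) = -\sgn(-x(t)) = -\sgn(y(t))$, so \eqref{sign} is invariant under $x \mapsto -x$. Hence it suffices to treat the case $x(b) > 0$; the case $x(b) < 0$ follows by applying the result to $y$, and $x(b)=0$ is excluded by hypothesis.

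Assuming $x(b) > 0$, I would argue by contradiction that $x(t) > 0$ for every $t \in (a,b]$. If this failed, continuity of $x$ together with the intermediate value theorem would produce a zero of $x$ in $(a,b)$; let $c := \sup\{t \in (a,b] : x(t) = 0\}$. This set is nonempty and bounded above by $b$, continuity forces $x(c)=0$, and $x(b)>0$ gives $c < b$. By maximality of $c$, the function $x$ has no zero in $(c,b]$, and since $x(b)>0$ the intermediate value theorem yields $x(t) > 0$ throughout $(c,b]$. Then \eqref{sign} gives $\dot x(t) < 0$ on $(c,b]$, so $x$ is strictly decreasing there; letting $t \downarrow c$ and using continuity at $c$ I obtain $x(c) \ge x(b) > 0$, contradicting $x(c)=0$. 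Therefore $x(t)>0$ on all of $(a,b]$, i.e. $\sgn(x(t)) = \sgn(x(b))$ for every $t \in (a,b]$.

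For the final assertion, I would note that on $(a,b]$ the established positivity makes $\dot x < 0$ throughout, so $x$ is strictly decreasing and the one-sided limit $\lim_{t \downarrow a} x(t)$ exists in $(0,+\infty]$ and is at least $x(b)>0$. When $a \in \dR$ and $x$ extends continuously to $a$, this limit equals $x(a)$, so $x(a) \ge x(b) > 0$ and hence $\sgn(x(a)) = \sgn(x(b))$; the negative case is again handled by the symmetry $x \mapsto -x$.

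I expect the main delicate point to be the treatment of the endpoint $a$: since the trajectory is a priori only defined on the half-open interval $(a,b]$, one must justify that $x(a)$ is meaningful, which amounts to the existence of the monotone limit at $a$ and the continuous extension used in the application. The interior argument itself is routine once the ``last zero'' $c$ is isolated; the only care needed there is that $\sgn(0)=0$ makes $t=c$ a stationary point of $x$, so the strict monotonicity must be invoked on the half-open interval $(c,b]$ rather than at $c$ itself, which is exactly what the continuity-at-$c$ step supplies.
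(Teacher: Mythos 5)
Your proof is correct and follows essentially the same strategy as the paper's: reduce to a single sign case by symmetry, isolate the last ``bad'' point via a supremum, and use the sign condition \eqref{sign} to force strict monotonicity on the interval to its right, yielding a contradiction with continuity at that point. The only cosmetic differences are that the paper takes the supremum of $\{t\in(a,b] : x(t)\ge 0\}$ rather than of the zero set and handles the endpoint $a$ by a second mean-value-theorem contradiction instead of your monotone-limit argument; both variants are equally valid, and your explicit observation that $x(a)$ only makes sense via a continuous extension to $a$ is a point the paper leaves implicit.
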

\begin{proof}
Without loss of generality, assume that $x(b)<0$. The case $x(b)> 0$ is handled similarly, {\em mutatis mutandis}. We show first that $x(t)<0$ for all $t\in (a,b]$. Suppose that, on the contrary, there exists $\tilde t \in (a,b]$ such that  $x(\tilde t\,)\ge 0$. This implies that the set
\[
S:=\{ t \in (a,b]\::\: x(t)\ge 0\},
\]
is not empty. Since $S$ is bounded above by $b$, there exists $t_1:=\sup(S)$. Since $x(\cdot)$ is continuous, we have that $t_1\in S$, or, equivalently, $x(t_1)\ge 0$. In particular, $t_1<b$ because $x(b)<0$ and $x(t_1)\ge 0$. The definition of $t_1$ implies that $x(t)<0$ for all $t\in (t_1,b]$.  Using the continuity of $x$ we deduce that $x(t_1)\le 0$. Altogether, we must have $x(t_1)=0$.  The Mean Value Theorem gives, for some $\theta\in (t_1,b)$:
\[
0=x(t_1)=x(b) +\dot{x}(\theta)(t_1-b)<x(b)<0,
\]
where we used \eqref{sign} for $t=\theta$ in the first inequality, and the fact that $x(\theta)<0$. The above expression entails a contradiction and hence we must have $S$ empty. This completes the proof of the first statement. Assume now that $a\in \dR$. The proof of the first statement implies that $x(t)<0$ for all $t\in (a,b]$. By continuity we deduce that $x(a)\le 0$. We need to prove that $x(a)<0$.
Assume that, on the contrary, $x(a)=0$. Using the Mean Value Theorem gives, for some $s\in (a,b)$:
\[
0 > x(b)=x(b) - x(a)= \dot{x}(s)(b-a)>0,
\]
where we used  \eqref{sign} for $t=s$ and the fact that $x(s)<0$, so $\dot{x}(s)>0$. The above expression entails a contradiction and hence we must have $x(a)<0$.
\end{proof}

\begin{lemma}[Sign of a Trajectory]  \label{lem:3}
Consider a monic depressed quartic polynomial, with $a_2<0$ and $a_1\neq0$.  Let $t_0 = \sqrt{-a_2/2}$\ \ and\ \ $x_0 = -\sqrt[3]{a_1/4}$\,.  Consider the initial value problem \eqref{ODE_valley_quartic}. Let $x(\cdot)$ be the maximally extended solution of \eqref{ODE_valley_quartic}, and $(m_0,t_0]$ the corresponding maximal interval of definition of $x(\cdot)$. Then $m_0=-\infty$ and $\sgn(x(t)) = -\sgn(a_1)$ for all $t\in(-\infty,t_0]$.
\end{lemma}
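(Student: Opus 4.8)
The plan rests on combining the positivity of the denominator of the right-hand side of \eqref{ODE_valley_quartic} along the trajectory (Lemma~\ref{lem:1}) with the rigidity of the flatness point (Lemma~\ref{flatness}), and on the sign-propagation principle of Lemma~\ref{sign-ODE}. The guiding observation is a sign clash: the initial abscissa $x_0=-\sqrt[3]{a_1/4}$ has $\sgn(x_0)=-\sgn(a_1)\neq0$, whereas the only point at which the denominator can vanish while $\mu_x=0$ is the flatness point $(\widehat{x},\widehat{t})$ of \eqref{x_valley_end}--\eqref{t_valley_end}, whose abscissa $\widehat{x}=\frac{1}{2}\sqrt[3]{a_1}$ has $\sgn(\widehat{x})=\sgn(a_1)$. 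If I can show the trajectory keeps the sign $-\sgn(a_1)$, it can never reach $\widehat{x}$, so no singularity is ever met and $m_0=-\infty$. First I record the driving relation: by Lemma~\ref{lem:1} we have $\mu_{xx}(x(t),t)>0$ on $(m_0,t_0]$, i.e.\ $6x^2(t)+2t^2+a_2>0$ by \eqref{muxx_quartic}; hence for every $t>0$ in the interval the right-hand side of \eqref{ODE_valley_quartic} has the sign $-\sgn(x(t))$, so $\sgn(\dot{x}(t))=-\sgn(x(t))$. This is exactly hypothesis \eqref{sign} of Lemma~\ref{sign-ODE} on any subinterval of $\{t>0\}$.

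To prove $m_0=-\infty$, I argue by contradiction. Suppose $m_0\in\dR$; then Lemma~\ref{lem:solvability}(c) gives $\mu_{xx}(x(m_0),m_0)=0$, while Lemma~\ref{lem:1} gives $\mu_x(x(m_0),m_0)=0$ (the identity $\mu_x=0$ extends to the closed endpoint by continuity). Thus $(x(m_0),m_0)$ solves the system \eqref{system}, and Lemma~\ref{flatness} forces $x(m_0)=\widehat{x}=\frac{1}{2}\sqrt[3]{a_1}$ and $m_0=\widehat{t}\ge0$. Since $m_0\ge0$, every $t\in(m_0,t_0]$ is positive, so the sign relation above holds there; applying Lemma~\ref{sign-ODE} with $a=m_0$, $b=t_0$ and $x(t_0)=x_0\neq0$ yields $\sgn(x(m_0))=\sgn(x_0)=-\sgn(a_1)$. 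But $\sgn(\widehat{x})=\sgn(a_1)$ and $a_1\neq0$, a contradiction. Hence $m_0=-\infty$.

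It remains to establish $\sgn(x(t))=-\sgn(a_1)$ on all of $(-\infty,t_0]$. On $(0,t_0]$ this follows at once from Lemma~\ref{sign-ODE} (with $a=0$), which also gives $\sgn(x(0))=-\sgn(a_1)$. For $t\le0$ the sign relation flips to $\sgn(\dot{x})=+\sgn(x)$, so Lemma~\ref{sign-ODE} no longer applies; instead I will show that $x$ never vanishes and then invoke continuity. For $|t|\le t_0$, if $x(t)=0$ then the denominator equals $2t^2+a_2\le2t_0^2+a_2=0$, contradicting $\mu_{xx}(x(t),t)>0$; so $x\neq0$ on $[-t_0,t_0]$. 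For $t<-t_0$, if $x(t_1)=0$ the denominator $2t_1^2+a_2>0$ there, so the right-hand side is locally Lipschitz near $(t_1,0)$ and the constant function $0$ solves \eqref{ODE_valley_quartic} through $(t_1,0)$; by uniqueness on the open set where the denominator is nonzero, $x$ must coincide with the zero function on the component $(-\infty,-t_0)$, forcing $x(-t_0)=0$ by continuity and contradicting the previous step. Therefore $x$ is continuous and nowhere zero on $(-\infty,t_0]$, so it retains the constant sign $\sgn(x_0)=-\sgn(a_1)$. (Alternatively, the invariance of \eqref{ODE_valley_quartic} under $t\mapsto-t$ together with uniqueness shows $x$ is even, reducing $[-t_0,0)$ to $(0,t_0]$, but the nonvanishing argument is still needed for $t<-t_0$.)

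I expect the treatment of $t\le0$ to be the main obstacle. Lemma~\ref{sign-ODE} is tailored to the attracting-to-zero regime $\sgn(\dot{x})=-\sgn(x)$ that holds only for $t>0$, so the left branch of the trajectory must be controlled by the nonvanishing argument above, and the delicate point is tracking the sign of the denominator across the threshold $|t|=t_0$, where it changes from negative (for $|t|<t_0$) to positive (for $|t|>t_0$) along the line $x=0$.
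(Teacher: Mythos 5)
Your proposal is correct, and its core is the same as the paper's: you rule out $m_0\in\dR$ by combining Lemma~\ref{lem:solvability}(c) and Lemma~\ref{lem:1} to place $(x(m_0),m_0)$ on the system \eqref{system}, then use Lemma~\ref{flatness} to force $x(m_0)=\frac{1}{2}\sqrt[3]{a_1}$, whose sign clashes with the sign propagated from $x_0$ by Lemma~\ref{sign-ODE}. Where you genuinely depart from the paper is in the region $t\le 0$. The paper simply asserts that hypothesis \eqref{sign} holds on all of $(m_0,t_0]$ and applies Lemma~\ref{sign-ODE} there; as you correctly observe, with a positive denominator the right-hand side of \eqref{ODE_valley_quartic} satisfies $\sgn(\dot x)=-\sgn(x)$ only for $t>0$ (and $\dot x=0$ at $t=0$), so the paper's invocation of Lemma~\ref{sign-ODE} is outside its hypotheses once the interval crosses $t=0$. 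Your replacement --- showing $x$ never vanishes, via the identity $2t_0^2+a_2=0$ on $|t|\le t_0$ and the uniqueness-against-the-zero-solution argument on $t<-t_0$, then concluding by continuity --- is sound and actually repairs this gap; the evenness observation $x(t)=x(-t)$ is also valid and gives a second route for $[-t_0,0)$. The one spot you should tighten is the step ``Lemma~\ref{flatness} forces $m_0=\widehat t\ge 0$'': that lemma is stated only for $(x,t)\in\dR\times[0,\infty)$, so it does not by itself exclude a flatness point at $m_0=-\widehat t<0$. This is harmless --- the elimination in the proof of Lemma~\ref{flatness} gives $x(m_0)=\frac{1}{2}\sqrt[3]{a_1}$ for either sign of $m_0$, and your nonvanishing argument (which does not depend on $m_0=-\infty$, only on $\mu_{xx}>0$ along the trajectory from Lemma~\ref{lem:1}) already yields $\sgn(x(m_0))=-\sgn(a_1)$ in that case too --- but as written the order of the two paragraphs leaves the sub-case $m_0<0$ formally uncovered, so you should either run the nonvanishing argument first or note explicitly that only the $x$-coordinate of the flatness point is needed.
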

\begin{proof}
Suppose that $a_1>0$, and hence $x_0<0$. The definition of $m_0$ indicates that the right-hand side of the ODE in \eqref{ODE_valley_quartic} is not zero and doesn't change sign over $[m_0,t_0]$. The choice of $x_0$ and $t_0$ imply that the denominator in the right-hand side of the ODE is positive at $t=t_0$. Hence we must have that this denominator is positive over $[m_0,t_0]$. This fact implies that property \eqref{sign} holds for the ODE  \eqref{ODE_valley_quartic} in the interval $(a,b]:=(m_0,t_0]$.  Since $x(t_0)=x_0<0$ we can apply Lemma \ref{sign-ODE} to conclude that  $x(t)<0$ for all $t\in\,[m_0,t_0]$, where $m_0\in \dR\cup\{-\infty\}$. 

Next, we prove that the solution $x(\cdot)$ can be infinitely extended to the left, in other words, $m_0 = -\infty$.  Suppose that, on the contrary, $m_0\in \mathbb{R}$. By Lemma~\ref{lem:solvability}(c), this can only happen if the right hand side of the ODE in~\eqref{ODE_valley_quartic} becomes discontinuous at $t=m_0$. This implies that 
\begin{equation}
  \label{eq:5}
  \mu_{xx}(x(m_0),m_0) = 0\,.
\end{equation}
By Lemma \ref{lem:1}, we have 
\[
\mu_x(x(t),t) = 0\,,
\]
for all $t\in [m_0,t_0]$. Therefore,
\begin{equation}
  \label{eq:6}
\mu_x(x(m_0),m_0) = 0\,.
\end{equation}
By Lemma~\ref{flatness}, Equations \eqref{eq:5}--\eqref{eq:6} have a unique solution with $x(m_0) = \sqrt[3]{a_1} / 2 > 0$. This is in contradiction with the second statement in Lemma \ref{sign-ODE}, which asserts that $x(m_0)<0$. Hence we must have $m_0=-\infty$.  The proof for the case when $a_1<0$ is obtained similarly. Namely, in this case we use that $x_0>0$ and Lemma \ref{sign-ODE} must be used for this case.
\end{proof}
%

\begin{theorem}[Well-definedness Yielding Global Minimizer]  \label{well-defined}
For a monic depressed quartic polynomial, with $a_1\neq0$ and $a_2<0$, Algorithm~\ref{algo2} is well-defined and it yields the global minimizer. 
\end{theorem}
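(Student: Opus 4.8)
The plan is to assemble the lemmas that precede the theorem, since each of the three steps of Algorithm~\ref{algo2} has essentially been prepared already. First I would dispatch Step~1: Lemma~\ref{t0_x0} shows that the choices $t_0=\sqrt{-a_2/2}$ and $x_0=-\sqrt[3]{a_1/4}$ make $\mu(\cdot,t_0)$ strictly convex with unique minimizer $x_0$, so that $\mu_x(x_0,t_0)=0$ and Step~1 can indeed be carried out. The substance of ``well-definedness'' then lies in Step~2.

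For Step~2 I would argue that the initial value problem~\eqref{ODE_valley_quartic} has a \emph{unique} solution defined on an interval containing $[0,t_0]$. Lemma~\ref{lem:solvability}(a) gives local existence and uniqueness near $t_0$ (the denominator equals $6x_0^2>0$ there), and part~(b) extends this to a maximal interval $(m_0,t_0]$. The crucial input is Lemma~\ref{lem:3}, which shows $m_0=-\infty$; hence $x(\cdot)$ is defined and unique on all of $(-\infty,t_0]$, in particular on $[0,t_0]$. Along this trajectory Lemma~\ref{lem:1} guarantees $\mu_{xx}(x(t),t)>0$, so the right-hand side of the ODE stays continuous (its denominator never vanishes), $x(\cdot)$ is $C^1$ up to and including $t=0$, and $x(0)$ is a genuine point of the trajectory. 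Consequently the limit demanded in Step~3 reduces to evaluation at $t=0$, and the IVP solution is obtained uniquely, so Step~2 is well defined.

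Finally, for Step~3 I would identify $x(0)$ with the global minimizer $x^*$ of $f$. Evaluating~\eqref{trajectory condition} at $t=0$ and using $\mu_x(x,0)=f'(x)$ together with $\mu_{xx}(x,0)=f''(x)$ (read off from~\eqref{muxx_quartic}, or via Proposition~\ref{limits}) gives $f'(x(0))=0$ and $f''(x(0))>0$, so $x(0)$ is a strict local minimizer of $f$. By Lemma~\ref{lem:3}, $\sgn(x(0))=-\sgn(a_1)$. Lemma~\ref{global_sign} asserts that the two local minimizers of $f$ have opposite signs and that $\sgn(x^*)=-\sgn(a_1)$; hence the local minimizer whose sign equals $-\sgn(a_1)$ is exactly the global one, forcing $x(0)=x^*$. (If $f'$ happens to have a single real root, the argument only simplifies, since coercivity makes that root the global minimizer while $x(0)$ is already a root of $f'$.)

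The main obstacle is concentrated entirely in Step~2: ruling out that the trajectory reaches the singular set $\{\mu_{xx}=0\}$ at some finite $m_0\in(0,t_0)$, which would simultaneously break continuation/uniqueness and destroy the limit used in Step~3. This is precisely where Lemma~\ref{lem:3}---built on Lemma~\ref{sign-ODE} and the flatness description of Lemma~\ref{flatness}---does the decisive work: the unique point at which $\mu_x=\mu_{xx}=0$ has abscissa $\widehat x=\tfrac12\sqrt[3]{a_1}$, whose sign is $\sgn(a_1)$, opposite to the sign $-\sgn(a_1)$ of the entire trajectory, so it can never be met. Once $m_0=-\infty$ is secured, everything else is bookkeeping.
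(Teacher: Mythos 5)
Your proposal is correct and follows essentially the same route as the paper's proof: Step~1 via Lemma~\ref{t0_x0}, well-definedness of the trajectory on $[0,t_0]$ via Lemmas~\ref{lem:solvability}, \ref{lem:1} and \ref{lem:3} (the key point being $m_0=-\infty$), and identification of $x(0)$ as the global minimizer by combining $\sgn(x(0))=-\sgn(a_1)$ with Lemma~\ref{global_sign}. Your write-up is in fact somewhat more explicit than the paper's (which compresses Steps~1--2 into a direct appeal to Lemmas~\ref{lem:1} and \ref{lem:3}), but there is no substantive difference in the argument.
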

\begin{proof}
By Step 1 of Algorithm~\ref{algo2}, $t_0 = \sqrt{-a_2/2}$\ \ and\ \ $x_0 = -\sqrt[3]{a_1/4}$\,. Since $a_2 < 0$ and $a_1\neq0$, by Lemma~\ref{lem:3}, Step~2 results in $\sgn(x(t)) = -\sgn(a_1)$ for all $t\in(-\infty,t_0]$.  Moreover, by Lemma~\ref{lem:1}, we have that
\[ 
\mu_x(x(t),t) = 0\,,\ \ \mu_{xx}(x(t),t) > 0\,,\ \forall t\in (-\infty,t_0]\,. 
\]
Therefore, $x(0)$ is a local minimizer. Now, Lemma~\ref{global_sign} and the fact that $\sgn(x(0)) = -\sgn(a_1)$ imply that  $x(0)$ must be the global minimizer. 
\end{proof}

\begin{remark} \rm 
By Lemma \ref{f: quasi-convex}, if $-3\,a_1^{2/3}/2 < a_2 < 0$, then the monic depressed quartic polynomial is quasi-convex. If  $a_2>0$ then $f$ is convex. In either case,  Algorithm~\ref{algo2} is not necessary.
\proofbox
\end{remark}

\begin{corollary}
Algorithm~\ref{algo2} is well-defined and convergent for any monic quartic polynomial.
\end{corollary}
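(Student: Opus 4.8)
The plan is to reduce an arbitrary monic quartic to the depressed case already settled by Theorem~\ref{well-defined}, and then to transfer the conclusion back through the scale-shift invariance of Lemma~\ref{shift_lemma}. Concretely, given a monic quartic $g(y) = y^4 + b_3\,y^3 + b_2\,y^2 + b_1\,y + b_0$, the horizontal shift $y = x - b_3/4$ produces $f(x) := g(x - b_3/4)$, which is monic and depressed as recalled at the start of Section~\ref{sec:quartic}; write $f(x) = x^4 + a_2\,x^2 + a_1\,x + a_0$. In the notation of Lemma~\ref{shift_lemma} one has $g(x) = f(x + b_3/4) = f(\alpha\,x - a)$ with $\alpha = 1 > 0$ and $a = -b_3/4$, so the depression is exactly a scale-shift of the type covered by that lemma (no genuine scaling is needed, precisely because $g$ is monic).

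First I would treat the generic regime $a_2 < 0$ and $a_1 \neq 0$. Here Theorem~\ref{well-defined} applies verbatim to $f$: the initial data $t_0 = \sqrt{-a_2/2}$ and $x_0 = -\sqrt[3]{a_1/4}$ are legitimate, the ODE in~\eqref{ODE_valley_quartic} admits a maximally extended, nonsingular solution on $(-\infty,t_0]$ by Lemmas~\ref{lem:1} and \ref{lem:3}, and $x(0)$ is the global minimizer $x^*$ of $f$. Applying Lemma~\ref{shift_lemma} with $\alpha = 1$ and $a = -b_3/4$ then guarantees that Algorithm~\ref{algo2} is well-defined for $g$ and returns $z^* = (x^* + a)/\alpha = x^* - b_3/4$, which is a global minimizer of $g$ since $g(z^*) = f(x^*)$.

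Next I would dispose of the degenerate sub-cases by inspection, as anticipated in Section~\ref{sec:quartic}. If $a_2 \ge 0$, then $f''(x) = 12\,x^2 + 2\,a_2 \ge 0$, so $f$ is convex with a unique minimizer; moreover $\mu_{xx}(x,t) = 12\,x^2 + 2\,(a_2 + 2\,t^2) > 0$ for every $t > 0$, so $\mu(\cdot,t)$ is convex for all $t > 0$ and the general Algorithm~\ref{algo1} (of which Algorithm~\ref{algo2} is the $a_2<0$ specialization) may be run from any convexifying $t_0 > 0$, its trajectory terminating at the unique minimizer $x(0)$. If $a_1 = 0$ (with $a_2 < 0$), then $f$ is even, its two global minimizers are the explicit points $\pm\sqrt{-a_2/2}$, and the specific datum $x_0 = 0$ gives $\mu_{xx}(x_0,t_0) = 0$; here, rather than follow the singular ODE, one reports these closed-form minimizers directly. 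In every case a global minimizer is produced.

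The main obstacle is the bookkeeping at the degenerate boundaries rather than any deep analysis. One must check that the closed-form initial data of Algorithm~\ref{algo2} remain meaningful precisely in the generic regime $a_2 < 0$, $a_1 \neq 0$, and argue that the two excluded situations — $a_2 \ge 0$, which makes $t_0 = \sqrt{-a_2/2}$ imaginary, and $a_1 = 0$, which makes the denominator of~\eqref{ODE_valley_quartic} vanish at $t_0$ — are exactly those for which the global minimizer is already available explicitly, so that ``well-defined and convergent'' is to be read as ``a global minimizer is returned, reducing to direct computation in the degenerate cases.'' The only genuinely structural point is verifying that the depression shift is a legitimate instance of Lemma~\ref{shift_lemma} (it is, with $\alpha = 1$) and that this single lemma simultaneously transfers well-definedness, the nonsingularity of the trajectory, and the correspondence of minimizers — which is precisely its content.
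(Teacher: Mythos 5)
Your proposal follows essentially the same route as the paper: reduce an arbitrary monic quartic to depressed form by the shift $y = x - b_3/4$, apply Theorem~\ref{well-defined}, and transfer the conclusion back via the scale-shift invariance of Lemma~\ref{shift_lemma} with $\alpha=1$. Your explicit treatment of the degenerate cases $a_2\ge 0$ and $a_1=0$ (where the paper's two-line proof stays silent and relies on the surrounding remarks that the algorithm is unnecessary there) is a welcome extra precision but does not change the substance of the argument.
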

\begin{proof}
Any quartic polynomial can be obtained from a depressed quartic polynomial through horizontal translation, or shift (and vice versa).  Therefore, Lemma~\ref{shift_lemma} on scale-shift invariance of Algorithm~\ref{algo1}, furnishes the proof.
\end{proof}

\subsection{Three types of trajectories}

Clearly, a monic quartic polynomial $f(x)$ has at most three local extrema, given by the roots of $f'(x)$.  If the roots of $f'(x)$ are distinct, then they correspond to two local minimizers and one local maximizer of $f(x)$.  Figure~\ref{curve_types} considers three cases in which (a) $f'(x)$ has a single real root, (b) $f'(x)$ has symmetric real roots and (c) $f'(x)$ has nonsymmetric real roots.  The trajectories run "forward" from each of these roots are depicted in Figure~\ref{curve_types}, providing a full characterization, on the surface defined by \eqref{mu}. 

The case when the root of $f'(x)$ is unique is exemplified in Figure~\ref{curve_types}(a):  with $f(x) = x^4 - 0.09\,x^2 - 0.03\,x - 1$,  the minimizer of $f'(x)$ is $x\approx 0.304668$.  In this case, $a_1 = -0.03\neq0$ and $a_2 = -0.09 < 0$.  It is easily checked that the conclusion of Lemma~\ref{lem:3} is satisfied, in that $\sgn(x(t)) = -\sgn(a_1)$ for all $t\in[0,t_0]$.

The case when $f'(x)$ has more than one (distinct) real root is exemplified in Figures~\ref{curve_types}(b)-(c).  In Figure~\ref{curve_types}(b), we have $f(x) = x^4 - 0.98\,x^2 + 1$, where $a_1=0$, so $f(x)$ has two global minimizers, $x=-0.7$ and $x=0.7$, and the local maximizer $x=0$.  This case (when $a_1=0$) is trivial, for which there is no need to implement Algorithm~\ref{algo2}.

On the other hand, the case when $f'(x)$ has more than one (distinct) real root, and these roots are nonsymmetric, is exemplified in Figure~\ref{curve_types}(c), with the polynomial $f(x) = x^4 - 4\,x^3/15 - 0.82\,x^2 + 0.168\,x + 1$.  The polynomial $f(x)$ has its global minimum at $x=-0.6$ and a local minimum at $x=0.7$.  The local maximizer of $f(x)$ is $x=0.1$.  One can easily verify Lemma~\ref{lem:3}, in that $\sgn(x(t)) = -\sgn(a_1)$ for all $t\in[0,t_0]$.  In this case, the system $\mu_x(x,t) = 0$\ \ and\ \ $\mu_{xx}(x,t)=0$ has a solution by Lemma~\ref{flatness}, which is shown at the bottom left in Figure~\ref{curve_types} as the point where two of the trajectories emanating from the local maximum and local minimum points merge on the surface.

 \begin{figure}
 \begin{minipage}{80mm}
 \begin{center}
 \psfrag{x}{$x$}
 \psfrag{t}{$t$}
 \includegraphics[width=80mm]{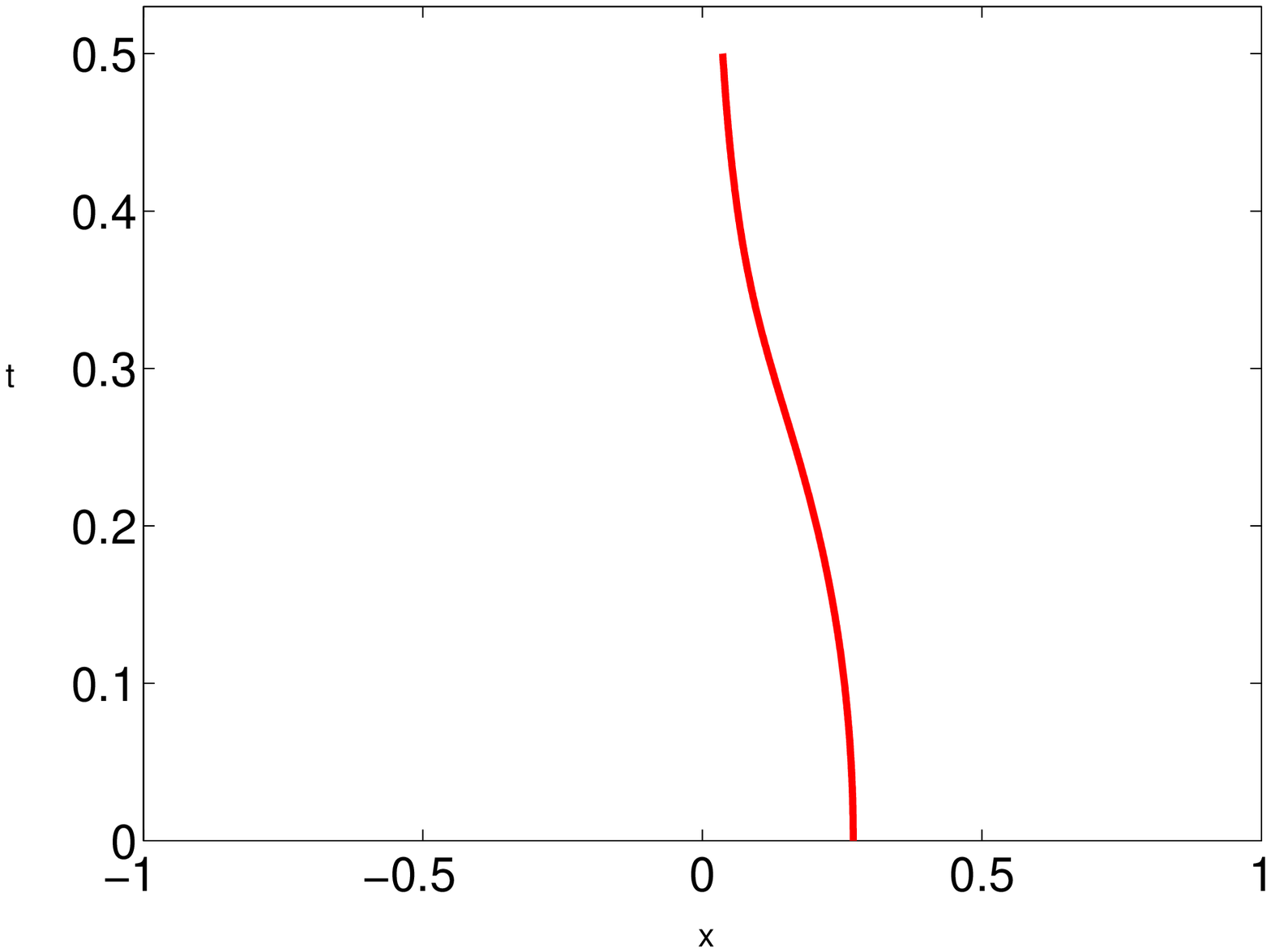} 
 \end{center}
 \end{minipage}
 \hspace{-5mm}
 \begin{minipage}{80mm}
 \begin{center}
 \psfrag{x}{$x$}
 \psfrag{t}{$t$}
 \psfrag{mu}{$\mu(x,t)$}
 \includegraphics[width=80mm]{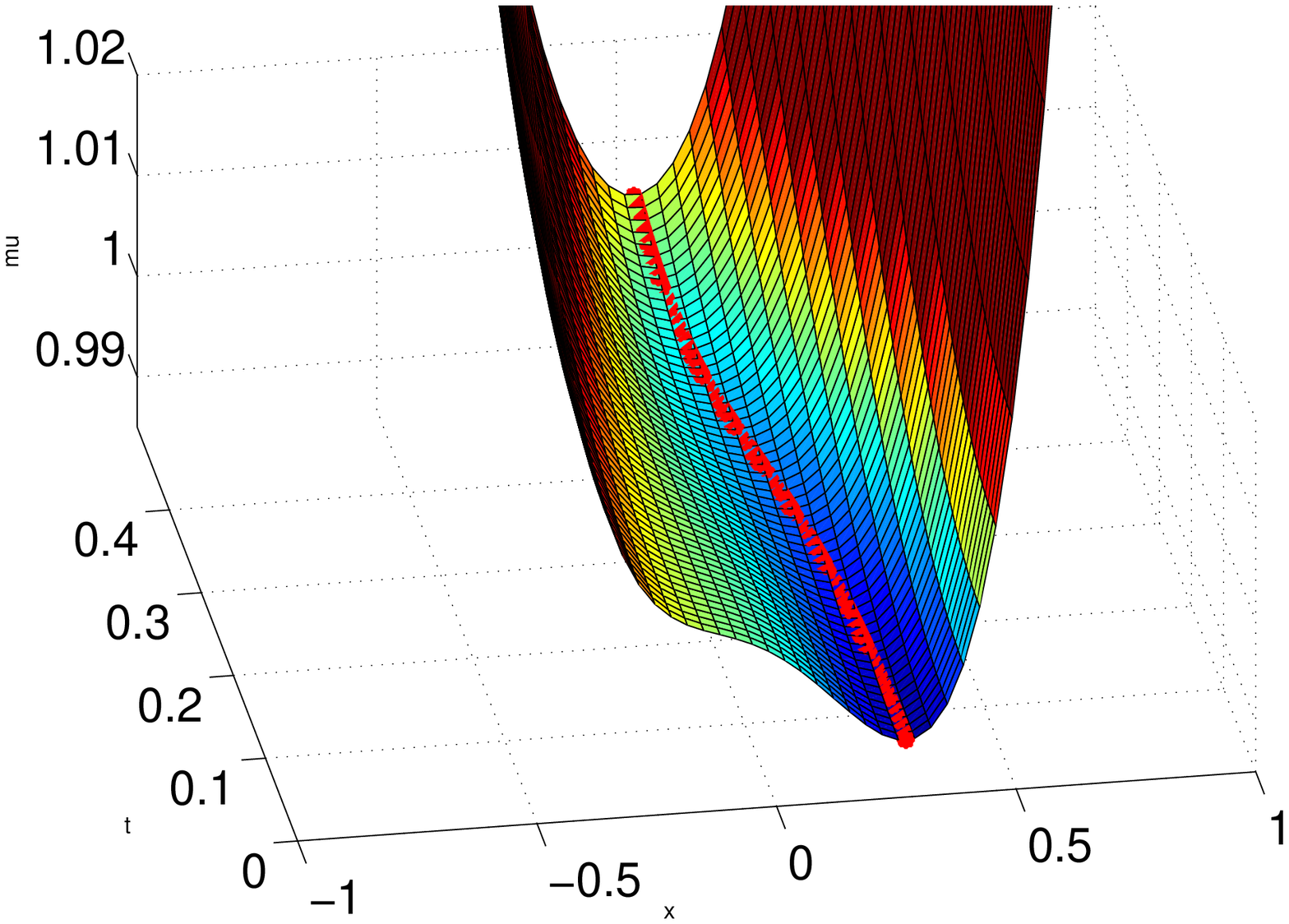}
 \end{center}
 \end{minipage} \\[3mm]
 \begin{center}
 (a) A quasi-convex $f$: $f(x) = x^4 -
 0.09\,x^2 - 0.03\,x - 1$. \\[3mm] 
 \end{center}
 \begin{minipage}{80mm}
 \begin{center}
 \psfrag{x}{$x$}
 \psfrag{t}{$t$}
 \includegraphics[width=80mm]{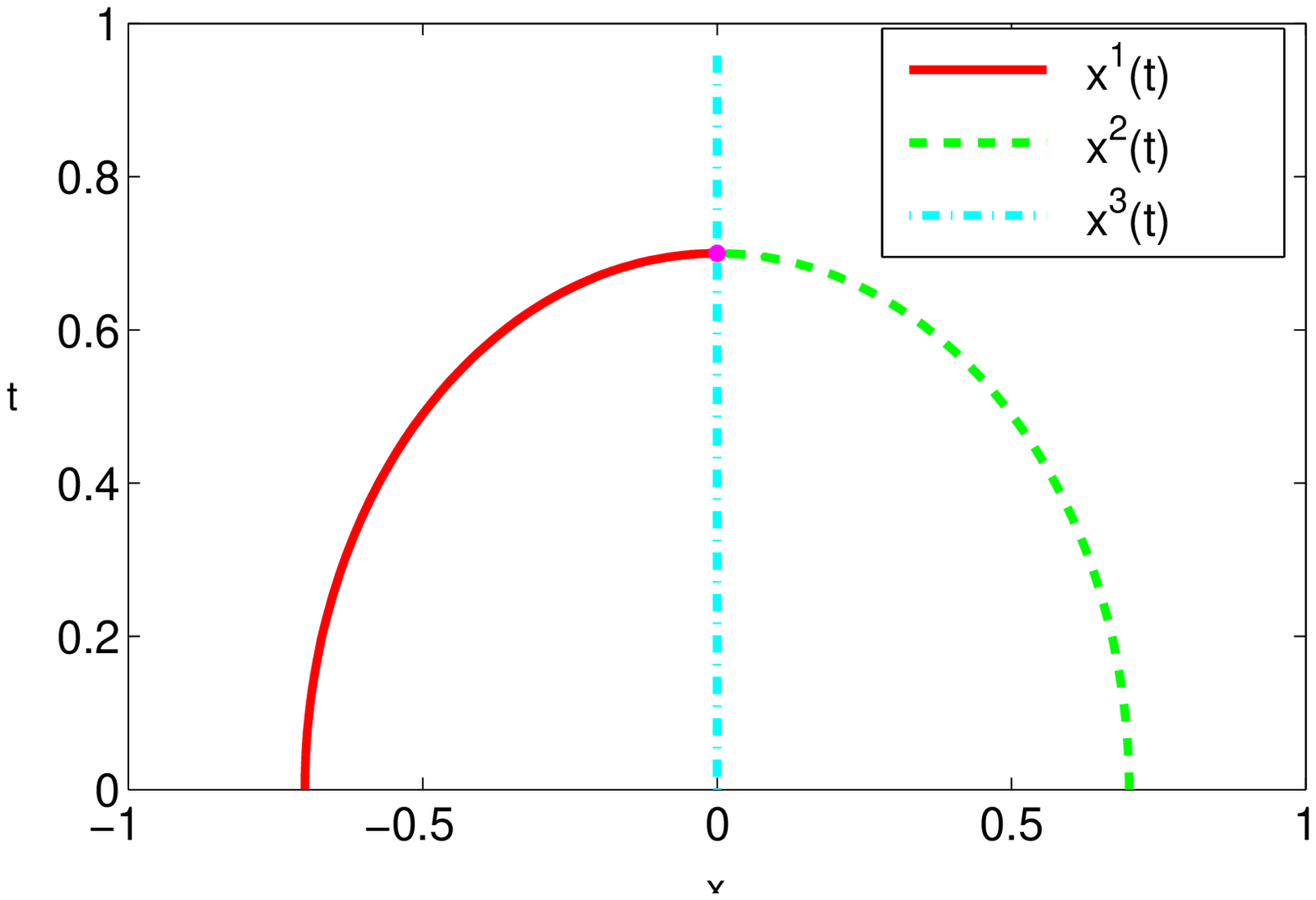}
 \end{center}
 \end{minipage}
 \hspace{-5mm}
 \begin{minipage}{80mm}
 \begin{center}
 \psfrag{x}{$x$}
 \psfrag{t}{$t$}
 \psfrag{mu}{$\mu(x,t)$}
 \includegraphics[width=80mm]{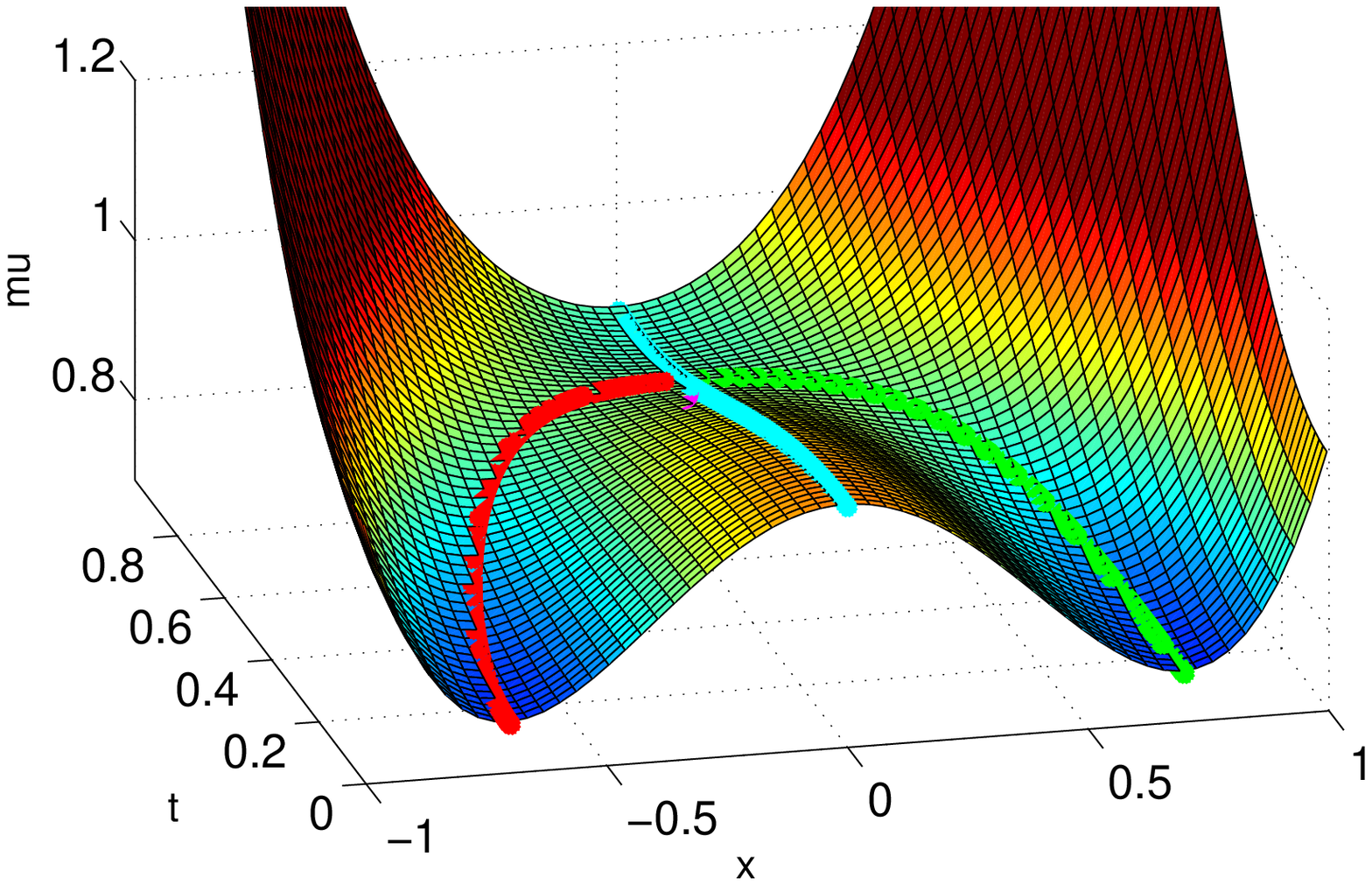}
 \end{center}
 \end{minipage} \\[3mm]
 \begin{center}
 (b) A nonconvex $f$ with symmetric roots: $f(x) = x^4 -
 0.98\,x^2 + 1$. 
 \\[3mm] 
 \end{center}
 \begin{minipage}{80mm}
 \begin{center}
 \psfrag{x}{$x$}
 \psfrag{t}{$t$}
 \includegraphics[width=80mm]{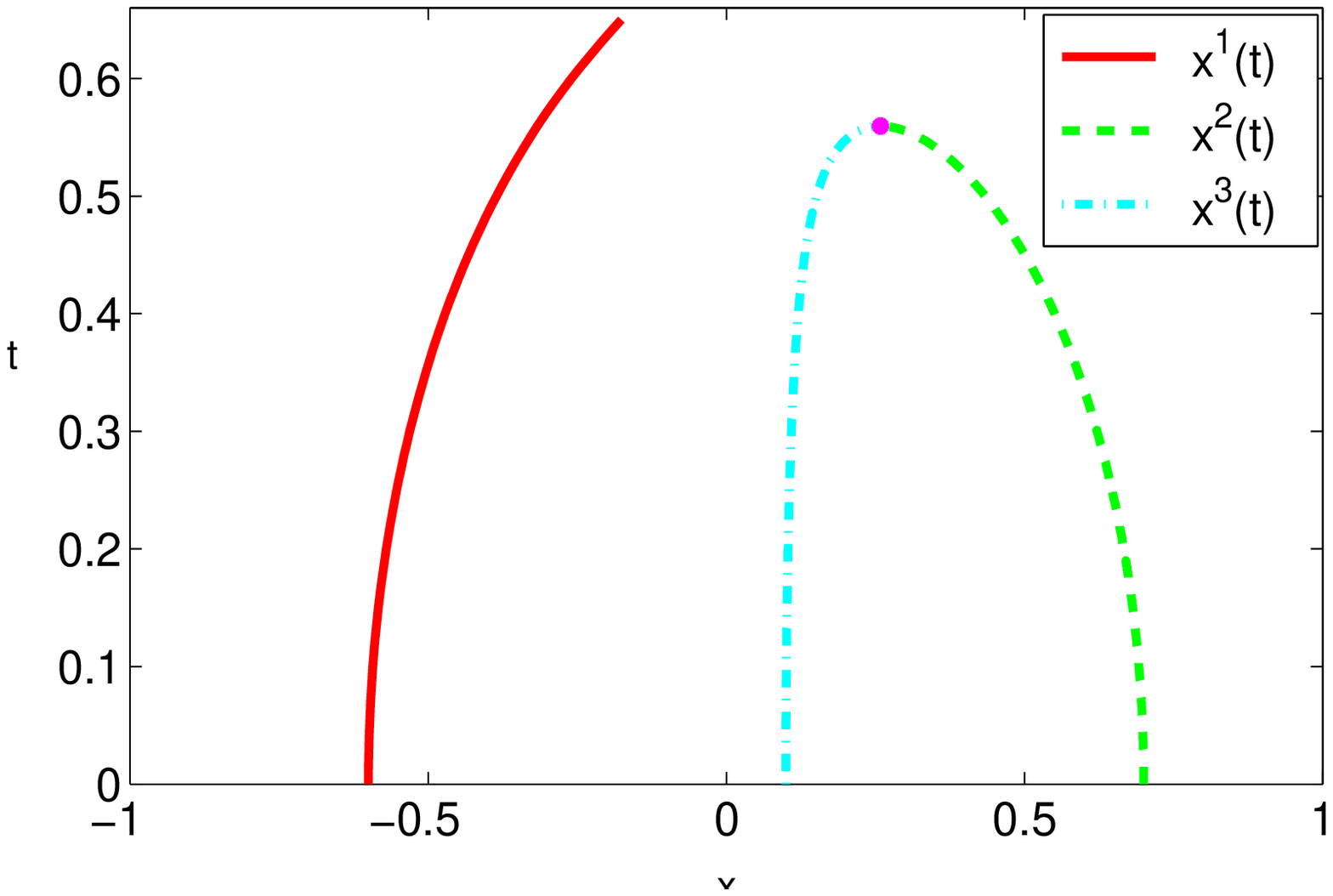} 
 \end{center}
 \end{minipage}
 \hspace{-5mm}
 \begin{minipage}{80mm}
 \begin{center}
 \psfrag{x}{$x$}
 \psfrag{t}{$t$}
 \psfrag{mu}{$\mu(x,t)$}
 \includegraphics[width=80mm]{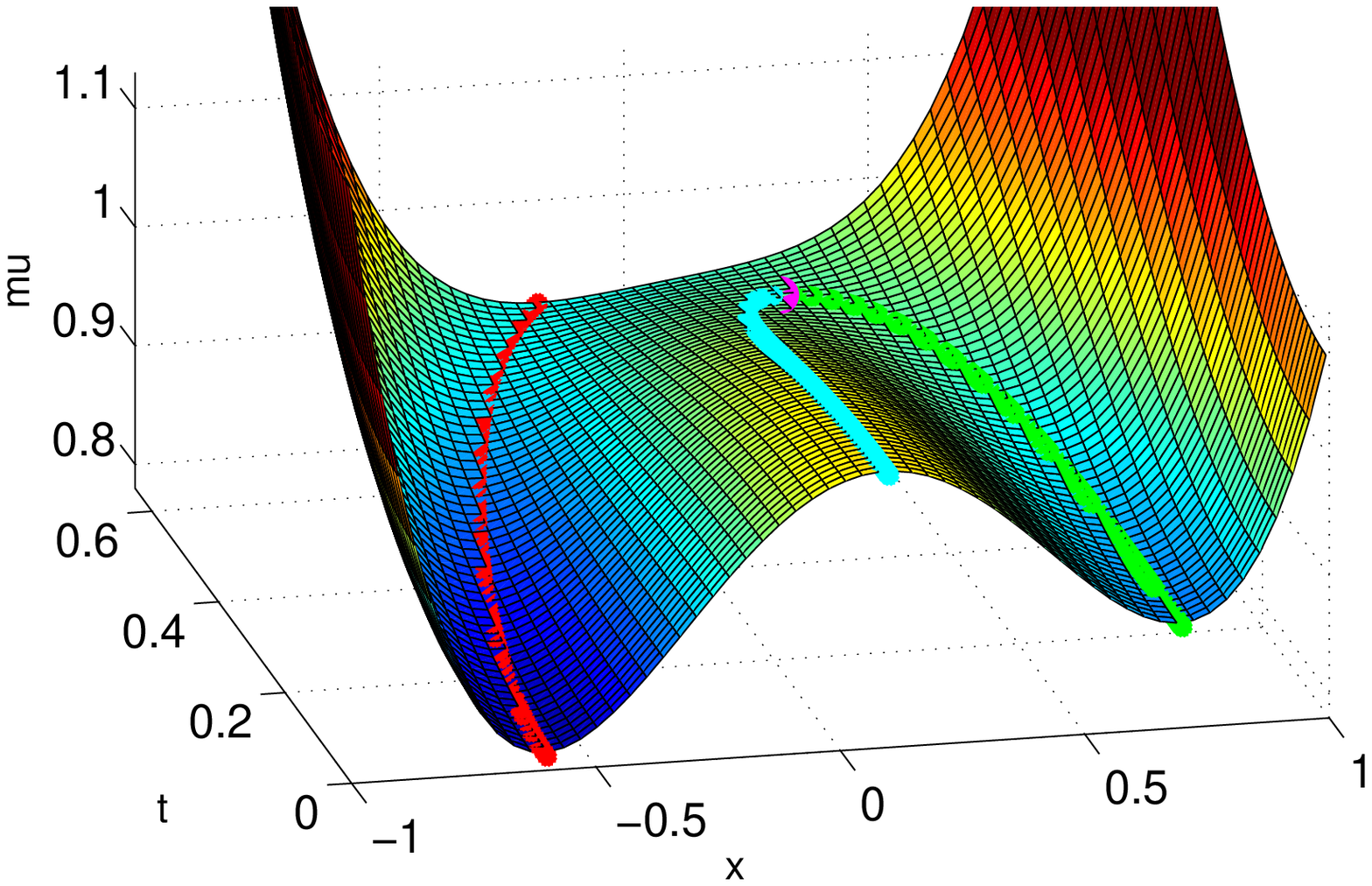} 
 \end{center}
 \end{minipage} \\[3mm]
 \begin{center}
 (c) A nonconvex $f$ with nonsymmetric roots:
 $f(x) = x^4 - 4\,x^3/15 - 0.82\,x^2 + 0.168\,x + 1$.
 \\[3mm] 
 \end{center}
 \caption{\sf  Examples for three types of polynomials and their associated trajectories.}
 \label{curve_types}
 \end{figure}

\section{Trajectory Methods with Quadratic Regularization}
\label{quad_reg}

The regularization idea for polynomial optimization is not new.  Such an approach, although not explicitly stated as a
regularization, is employed in~\cite{ZhuZhaLiu2014}, for finding a global minimizer of a monic polynomial $f$ of even
degree. In~\cite{ZhuZhaLiu2014}, in an algorithm similar to Algorithm~\ref{algo1}, the function
\begin{equation}  \label{phi}
\varphi(x,t) = f(x) + \frac{t}{2}\,x^2\,,
\end{equation}
is effectively used, instead of $\mu(x,t)$.  We refer to $\varphi$ as the {\em quadratic regularization} of $f$. A direct computation from \eqref{phi} yields the partial derivatives
\begin{equation}  \label{partial-derivs}
\varphi_x(x,t) = f'(x) + t\,x\,,\quad \varphi_{xx}(x,t) = f''(x) + t\,,\quad\varphi_{tx}(x,t) = x\,.
\end{equation}

The formula for $ \varphi_{xx}$ directly gives a well-known convexity result analogous to Theorem~\ref{convexity}, for the regularization $\varphi(\cdot,t)$. 

\begin{remark}\label{conv-QR}
Assume that $f$ is twice differentiable, and that $l_0:=\inf_{x\in \dR} f''(x)\in \dR$. Then,
$\varphi(\cdot,t)$ is convex for $t\ge -l_0$.
\end{remark}

This remark can be used in Steps~1--2 to give Algorithm~\ref{algo3},  companion of Algorithm~\ref{algo1}, for the quadratic regularization.

\newpage

\begin{algorithm} \label{algo3} \
\begin{description}
\vspace*{-3mm}
\item[Step \boldmath{$1$}] Choose the parameter $t_0>0$ large enough so that $\varphi(\cdot,t_0)$ is convex. Find the (global) minimizer $x_0$ of $\varphi(\cdot,t_0)$, i.e., solve $\varphi_x(x_0,t_0) = 0$ for $x_0$.
\item[Step \boldmath{$2$}] Solve the initial value problem
\begin{equation}  \label{ODE_valley4}
\dot{x}(t) = -\frac{\varphi_{tx}(x(t),t)}{\varphi_{xx}(x(t),t)}= - \dfrac{x(t)}{f''(x)+t}\,,\quad\mbox{ for a.e. } t\in(0,t_0]\,,\quad\mbox{with } x(t_0) = x_0\,.
\end{equation}
\item[Step \boldmath{$3$}] Report $x(0)$ as a global minimizer of $f(\cdot)$.
\end{description}
\end{algorithm}

The quadratic regularization defined in \eqref{phi} is not scale-shift invariant, in the sense of Lemma \ref{shift_lemma}. This fact has been established in \cite{AriBurKay2015} by means of an example. Namely, if Algorithm~\ref{algo3} is applied to a general quartic polynomial (not depressed) then it may not yield a global minimizer.  We further illustrate this fact by means of example polynomials, including those of higher degrees, in the next section.

\section{Numerical Experiments}

In this section, via numerical experiments, we illustrate the working of our trajectory method devised utilizing Steklov regularization, i.e., Algorithm~\ref{algo1} (which becomes Algorithm~\ref{algo2} for the case in which $f$ is a quartic polynomial), on example problems involving quartic and higher-degree polynomials, as well as an example involving a non-polynomial function.  We provide comparisons with the trajectory method in~\cite{ZhuZhaLiu2014}, namely Algorithm~\ref{algo3}, which, as pointed in Section~\ref{quad_reg}, can be derived using a quadratic regularization.

We illustrate the behaviour of the algorithms by means of graphs. In Figures~\ref{fig:deg4}--\ref{fig:deg10-20} for the polynomial examples presented in this paper, the graphs in parts~(a) and (c) of the figures provide the ``contours of $t$," i.e., the graph of the regularization function (quadratic or Steklov) with a number of fixed values of $t$ between 0 and a chosen value of $t_0$.  In parts~(b) and (d) of the figures, a surface plot of the regularizing function (quadratic or Steklov) is provided.  In the figure for the non-polynomial example considered in Subsection~\ref{nonpoly}, similar graphs are displayed.

In Subsection~\ref{stats}, we measure the performance of Algorithms~\ref{algo1} and \ref{algo3} for randomly generated polynomials of certain degrees.  Table \ref{failure_rates} shows that  Algorithm~\ref{algo1} is always convergent for the quartic polynomials generated randomly, in line with Theorem~\ref{well-defined}, and convergent for the great majority of the higher-degree polynomials generated randomly.  It is further observed that, although Algorithm~\ref{algo1} does not converge for all the tested polynomials of degree greater than four, it clearly outperforms Algorithm~\ref{algo3}. 

In all graphs, the trajectory, or the solution curve of an ODE, constructed by an algorithm is also depicted.  A trajectory is generated by solving the pertaining initial value problem (IVP) using the {\sc Matlab} function {\tt ode15s}, with {\tt RelTol = 1e-08}.  We have used {\tt ode15s}, which is a choice for stiff ODEs, since only then it was possible to get a solution of the IVP or a message saying that it was not possible to get a solution, the latter being useful in obtaining the success rates of the algorithms in Subsection~\ref{stats}.

\newpage

\subsection{A quartic polynomial}
\label{ex1}

Consider minimization of the polynomial
\[
f(x) = x^4 - 8\,x^3 - 18\,x^2 + 56\,x\,,
\]
which has local minima at $x=-2$ and $x=7$ and a local maximum at $x=1$.  Note that $f(-2)=-104$, $f(7)=-833$ and $f(1)=31$.  Therefore, $x=7$ is the global minimizer of $f(x)$.  This polynomial is provided in~\cite{AriBurKay2015} as a counterexample to prove that the trajectory approach in~\cite{ZhuZhaLiu2014} using quadratic regularization, i.e., Algorithm~\ref{algo3} given in the present paper, does not necessarily yield to a global minimizer, as opposed to the claim in~\cite{ZhuZhaLiu2014}.  Indeed, as Figure~\ref{fig:deg4}(a)--(b) illustrates, the trajectory constructed by the quadratic regularization converges to the local minimizer $x=-2$ rather than the global minimizer $x=7$. As discussed in~\cite{AriBurKay2015}, the quadratic regularization function $\varphi(\cdot,t_0)$ convexifies the given quartic polynomial with $t_0 > 84$.  For visual convenience in Figure~\ref{fig:deg4}(a)--(b), we have used $t_0 = 100$ in Algorithm~\ref{algo3}, as in~\cite{AriBurKay2015}.  Again, from~\cite{AriBurKay2015}, the corresponding $x_0\approx -0.6812$.




\begin{figure}[t]
\begin{minipage}{80mm}
\begin{center}
\includegraphics[width=80mm]{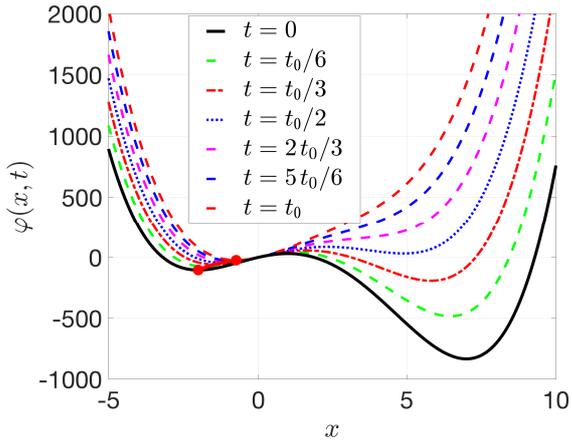} \\[3mm]
(a) Quadratic regularization -- contours of $t$ \\ with $t_0 = 100$.
\end{center}
\end{minipage}
\begin{minipage}{80mm}
\begin{center}
\includegraphics[width=80mm]{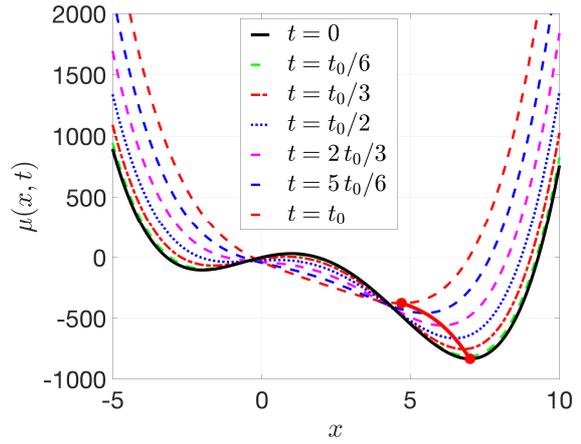} \\[3mm]
(c) Steklov regularization -- contours of $t$ \\ with $t_0 =  5$.
\end{center}
\end{minipage}
\\[5mm]
\begin{minipage}{80mm}
\begin{center}
\includegraphics[width=80mm]{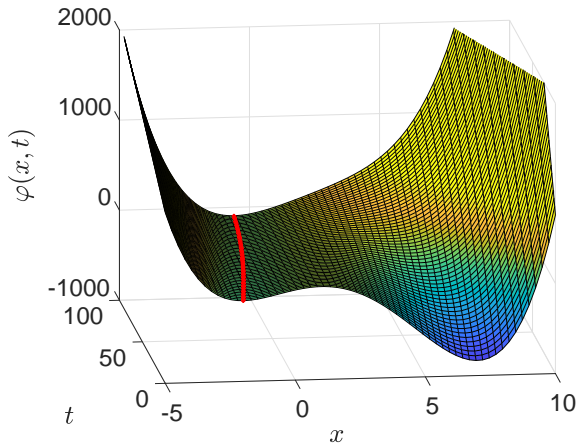} \\[3mm]
(b) Quadratic regularization -- surface \\ with $t_0 = 100$.
\end{center}
\end{minipage}
\begin{minipage}{80mm}
\begin{center}
\includegraphics[width=80mm]{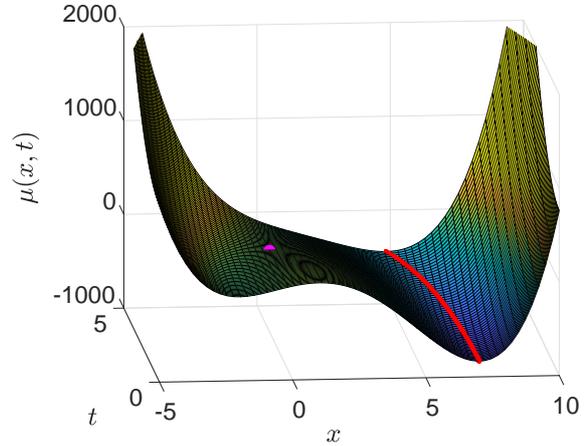} \\[3mm]
(d) Steklov regularization -- surface \\ with $t_0 =  5$.
\end{center}
\end{minipage}
\caption{\sf Trajectory methods for the quartic polynomial, $f(x) = x^4 - 8\,x^3 - 18\,x^2 + 56\,x$\,.} 
\label{fig:deg4}
\end{figure}

\newpage

Algorithm~\ref{algo1} yields the global minimizer, as expected by Theorem~\ref{well-defined}, see Figure~\ref{fig:deg4}(c)--(d).  The polynomial $f(x)$ can be rewritten in a depressed form using the transformation $x = z+2$ as
\[
f(z) = z^4 - 42\,z^2 - 80\,z - 8\,.
\]
By using Lemma~\ref{t0_x0}, we see that the Steklov regularization convexifies the given quartic polynomial just for $t_0=\sqrt{21}\approx 4.5826$, for which $x_0 = 2+\sqrt[3]{20}\approx 4.7144$.  Again for visual convenience we have used $t_0 = 5$.

In fact, by the Flatness Lemma~\ref{flatness}, the Steklov function $\mu(\cdot,t)$ is quasi-convex at 
$\widehat{t} \approx 2.6599$.  We note that $\mu_x(\widehat{x},\widehat{t}) = 0 = \mu_x(\widehat{x},\widehat{t})$, with $(\widehat{x},\widehat{t}) \approx (2.6599, -0.1544)$, which is also indicated with a (pink) mark in Figure~\ref{fig:deg4}(d).

One could as well have used $t_0 = \widehat{t} \approx 2.66$ for which $\mu(\cdot,t_0)$ is quasi-convex, and Algorithm~\ref{algo1} can be run with $(x_0,t_0)$ and the associated initial condition $\mu_x(x_0,t_0)=0$, in Step~1.

As will be seen also with the higher-order polynomials, the Steklov function $\mu(\cdot,t_0)$ is convex with a rather small $t_0$.  On the other hand, the quadratic regularization function $\varphi(\cdot,t_0)$ becomes convex with a much larger $t_0$, which is almost 20 times the $t_0$ needed for the Steklov function.  In the subsequent subsections, it will be observed that $t_0$ grows greatly with the degree of a polynomial.  When solving an IVP, a big $t_0$ makes the time span (or time horizon) $[0,t_0]$ big and this causes ODE solvers to take a much longer time and run more often into difficulties.


\afterpage{\clearpage}

\subsection{A degree-6 polynomial}
\label{ex2}

Consider minimization of the degree-6 polynomial
\[
f(x) = x^6 - 66\,x^5/5 - 9\,x^4/2 + 422\,x^3 - 474\,x^2 - 2160\,x\,,
\]
which has local minima at $x=-4,2$ and 9 and local maxima at $x=-1$ and 5.  A graph of the polynomial can be seen in Figure~\ref{fig:deg6}.  The global minimizer of $f(x)$ is $x=9$, with $f(9) = -27726.3$ (exactly).  One has the local minima $f(-4) = -9491.2$ and $f(2) = -3270.4$, and the local maxima $f(-1) = 1273.7$ and $f(5) = 1662.5$, all exact.

\begin{figure}[t]
\begin{minipage}{80mm}
\begin{center}
\includegraphics[width=80mm]{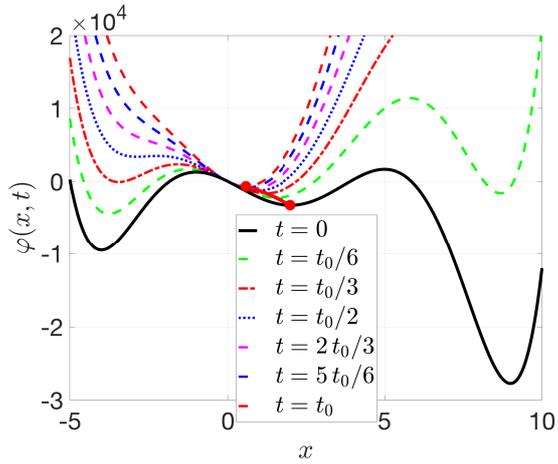} \\[3mm]
(a) Quadratic regularization -- contours of $t$ \\ with $t_0 = 4000$.
\end{center}
\end{minipage}
\begin{minipage}{80mm}
\begin{center}
\includegraphics[width=80mm]{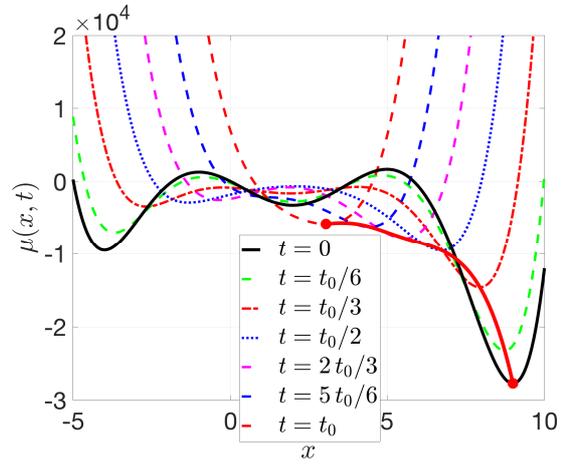} \\[3mm]
(c) Steklov regularization -- contours of $t$ \\ with $t_0 =  7$.
\end{center}
\end{minipage}
\\[5mm]
\begin{minipage}{80mm}
\begin{center}
\includegraphics[width=80mm]{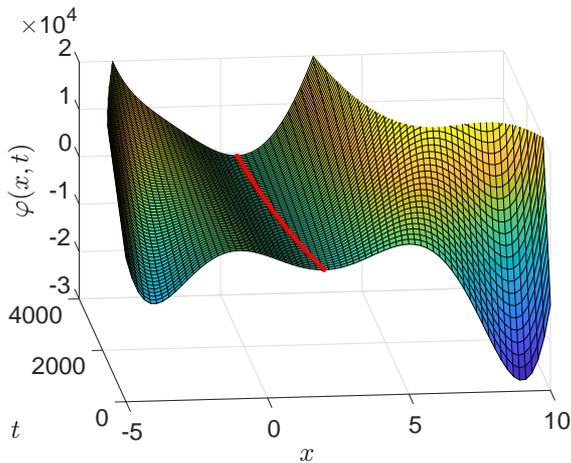} \\[3mm]
(b) Quadratic regularization -- surface \\ with $t_0 = 4000$.
\end{center}
\end{minipage}
\begin{minipage}{80mm}
\begin{center}
\includegraphics[width=80mm]{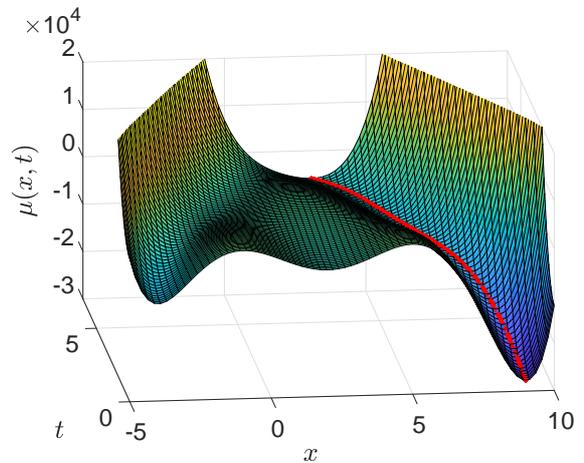} \\[3mm]
(d) Steklov regularization -- surface \\ with $t_0 =  7$.
\end{center}
\end{minipage}
\caption{\sf Trajectory methods for the degree-6 polynomial, $f(x) = x^6 - 66\,x^5/5 - 9\,x^4/2 + 422\,x^3 - 474\,x^2 - 2160\,x$\,.} 
\label{fig:deg6}
\end{figure}

We observe that $t_0=7$ is enough to convexify the Steklov function $\mu(\cdot,t_0)$, while the quadratic regularization function $\varphi(\cdot,t_0)$ requires $t_0 \approx 4000$ to become convex.  Moreover, Algorithm~\ref{algo3} (using the quadratic regularization) yields the local minimizer $x=2$, while Algorithm~\ref{algo1} (using the Steklov regularization) yields the global minimizer $x=9$.

This polynomial $f(x)$ is just one degree-6 polynomial example to illustrate the working and success of Algorithm~\ref{algo1}, as well as the working and failure of Algorithm~\ref{algo3}.  Algorithm~\ref{algo1} can also fail for some degree-6 polynomials, but not as often as Algorithm~\ref{algo3} does.  As mentioned before, detailed comparisons of success rates for each of the algorithms are shown in Subsection~\ref{stats}.


\newpage

\subsection{Degree-10 and degree-20 polynomials}
\label{ex3}

Consider minimization of the degree-10 monic polynomial $f_{10}(x)$ with the coefficients
{\small
\[
[a_9, \cdots, a_0] = [260/9, 1035/4, -120, -9415, 32172, 175765/2, -1369360/3, -148560, 1209600, 0]\,.
\]}
The global minimizer of $f_{10}(x)$ is $x=9$, with $f_{10}(9) = -2077224.75$ (exactly).

\begin{figure}[t]
\begin{minipage}{80mm}
\begin{center}
\includegraphics[width=80mm]{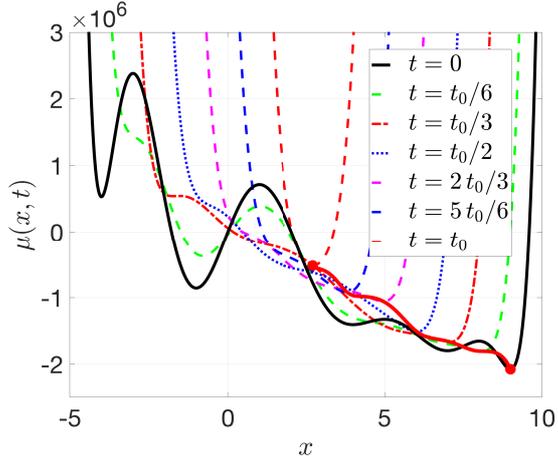} \\[3mm]
(a) Steklov regularization -- contours of $t$ \\ with $t_0 =  7$.
\end{center}
\end{minipage}
\begin{minipage}{80mm}
\begin{center}
\includegraphics[width=80mm]{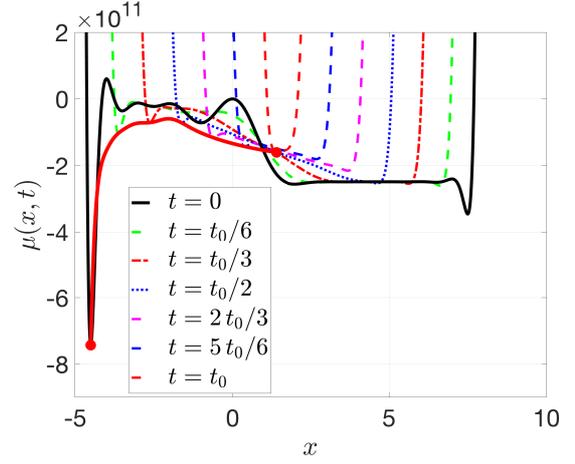} \\[3mm]
(c) Steklov regularization -- contours of $t$ \\ with $t_0 = 6$.
\end{center}
\end{minipage}
\\[5mm]
\begin{minipage}{80mm}
\begin{center}
\includegraphics[width=80mm]{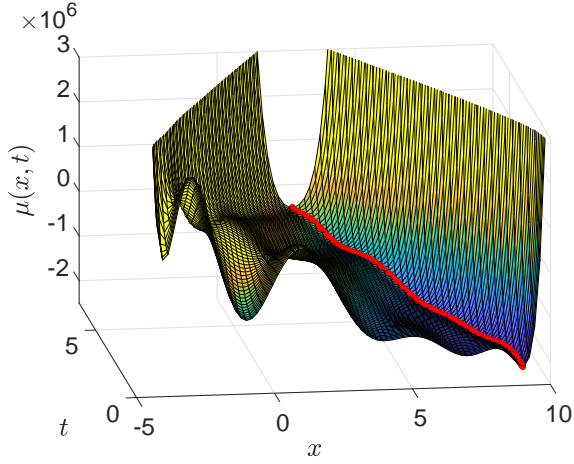} \\[3mm]
(b) Steklov regularization -- surface \\ with $t_0 =  7$.
\end{center}
\end{minipage}
\begin{minipage}{80mm}
\begin{center}
\includegraphics[width=80mm]{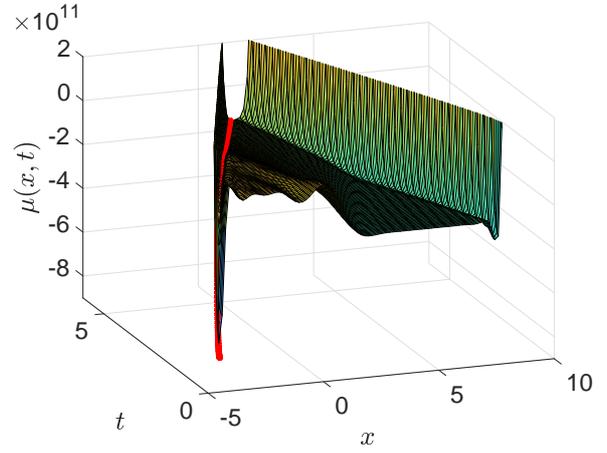} \\[3mm]
(d) Steklov regularization -- surface \\ with $t_0 =  6$.
\end{center}
\end{minipage}
\caption{\sf Algorithm~\ref{algo1} for some given degree-10 polynomial (parts~(a) and (b)) and degree-20 polynomial (parts~(c) and (d)).} 
\label{fig:deg10-20}
\end{figure}

We also consider minimization of the degree-20 monic polynomial $f_{20}(x)$ with the coefficients
{\small
\begin{eqnarray*}
[a_{19}, \cdots, a_0] &=& [680/19, 3935/9, -15755/17, -196105/8, 2230697/12, 20765145/112, \\
&& -1351162585/208, 10221013715/768, 6382409515/64, -12625444643/32, \\
&& -200463718805/288, 2498521767895/512, 465297612345/448, -2045419187205/64, \\
&& 198942566751/16, 3627285358725/32, -56515087125, -201131555625, 0, 0]\,.
\end{eqnarray*}}
The global minimizer of $f_{20}(x)$ is $x=-4.5$, with $f_{20}(-4.5) = -742786593463.8248$ (exactly).

Algorithm~\ref{algo1} successfully yields a global minimizer for both polynomials as can be seen in Figure~\ref{fig:deg10-20}.   Moderate sizes of $t_0$ (7 and 6, respectively) suffice in each case for convexification.

Algorithm~\ref{algo3} fails to serve the purpose for either polynomial.  For the degree-10 polynomial, it only yields the local minimizer $x = -1$, with $t_0 = 2\times10^6$, which, although quite large, is just enough for convexification.  For the degree-20 polynomial, one needs a far larger $t_0\approx 10^{12}$ for convexification; however, the ODE solver takes an indefinite amount of time and does not provide any answer, conceivably because of the very large orders of magnitude involved in the computations.  The graphs that were generated suggest that the trajectory method would yield $x=0$, which this time is a local maximizer!  For brevity, we do not provide the graphs for the quadratic regularization. 


\subsection{Performance comparisons between Algorithms~\ref{algo1} and \ref{algo3}}
\label{stats}

In Sections~\ref{ex1}--\ref{ex3}, we have applied Algorithms~\ref{algo1} and \ref{algo3} to four selected polynomials of degrees four, six, 10 and 20, and illustrated the workings of both algorithms.  Algorithm~\ref{algo1} was successful in finding a global minimum of each of the polynomials considered in Sections~\ref{ex1}--\ref{ex3}, while Algorithm~\ref{algo3} consistently failed.  In all fairness, neither Algorithm~\ref{algo1} is successful in dealing with every single polynomial (computationally speaking) nor Algorithm~\ref{algo3} is unsuccessful for every single polynomial.  To better understand how these two methods compare, we present failure rates of both algorithms for 1000 randomly generated polynomials of various degrees.  We have generated the polynomials in such a way that their extremal values were uniformly distributed over the interval $[-5,5]$.

\begin{table}
\begin{minipage}{35mm}
\
\end{minipage}
\begin{minipage}{40mm}
\hspace{10mm}{\em Algorithm~\ref{algo1}} \\[1mm]
\begin{tabular}{ccc}
$n$ & $t_0$ & Failure rate \\ \hline
4\ & 6 & 0\% \\
6\ & 7 & 1\% \\
8\ & 7 & 2\% \\
10\ & 7 & 4\% \\
12\ & 7 & 4\% \\
14\ & 7 & 4\% \\
20\ & 7 & 7\%
\end{tabular}
\end{minipage}
\hspace*{2mm}
\begin{minipage}{40mm}
\hspace{10mm}{\em Algorithm~\ref{algo3}} \\[1mm]
\begin{tabular}{ccc}
$n$ & $t_0$ & Failure rate \\ \hline
4\ & $10^{3}$ & 26\% \\
6\ & $10^{4}$ & 63\% \\
8\ & $10^{5}$ & 77\% \\
10\ & $10^{8}$ & 84\% \\
12\ & $10^{8}$ & 88\% \\
14\ & $10^{8}$ & 92\% \\
20\ & $10^{10}$ & 96\%
\end{tabular}
\end{minipage}
\caption{\sf A comparison of failure rates of Algorithms~\ref{algo1} and \ref{algo3} for degree-$n$ polynomials ($n$ as listed).}
\label{failure_rates}
\end{table}

Table~\ref{failure_rates} lists the failure rates for each algorithm as they are applied to polynomials of various degrees,  where the polynomials of each degree are randomly generated 1000 times.  We declare failure of the method for a given polynomial when either the algorithm did not converge, or it converged to a local minimum. For polynomials of degree higher than four, it is not trivial (if not impossible), to check convexity of $\mu(\cdot,t_0)$ or $\varphi(\cdot,t_0)$ for a given $t_0$.  The choice we made for the value of $t_0$ required in each of the experiments is drastically different for each method. We observe that Algorithm~\ref{algo1} requires $t_0\in\{6,7\}$ for all cases, while Algorithm~\ref{algo3} requires $t_0\in[10^{3},10^{10}]$, with values  increasing with the degree of the polynomials. These large values of $t_0$ promote convexification of $\varphi(\cdot,t_0)$,  but, at the same time, they are likely to cause numerical instabilities. In summary, perhaps not many but still some of the failures of Algorithm~\ref{algo3} may be attributed to (i) $t_0$ not being large enough for convexification, (ii) $t_0$ being too large, or both. It is likely that values of $t_0$ greater than the values of $t_0$ already listed in Table~\ref{failure_rates} (especially for high degree polynomials) will cause numerical instabilities. This is another reason why Algorithm~\ref{algo1} looks favourable, when compared with Algorithm~\ref{algo3}.   

Having made these remarks, especially for the high degree polynomials, one may consider doing a rescaling in order to avoid high orders of magnitudes in computations; however, we have not considered a rescaling of any of the polynomials in our computations. In the case of Algorithm~\ref{algo1}, there is certainly room for choosing $t_0$ to be bigger in the experiments.  

The failure rates for Algorithm~\ref{algo3} are very high, increasing sharply with degree, reaching 84--96\% for polynomials of degree 10--20.  Even for quartic polynomials, the failure rate of Algorithm~\ref{algo3} is rather high, at 26\%, while Algorithm~\ref{algo1} has no failures for this case, as expected by Theorem~\ref{well-defined}.  From the numerical experiments, we observe that Algorithm~\ref{algo1} can fail, even for degree-6 and degree-8 polynomials; however, the failure rate is small, at 1--2\%, in practical terms.  This rate is far smaller than that of Algorithm~\ref{algo3} for similar degree polynomials, as shown in Table ~\ref{failure_rates}.

\afterpage{\clearpage}

\subsection{A non-polynomial function}
\label{nonpoly}

\begin{figure}[t]
\begin{center}
\includegraphics[width=130mm]{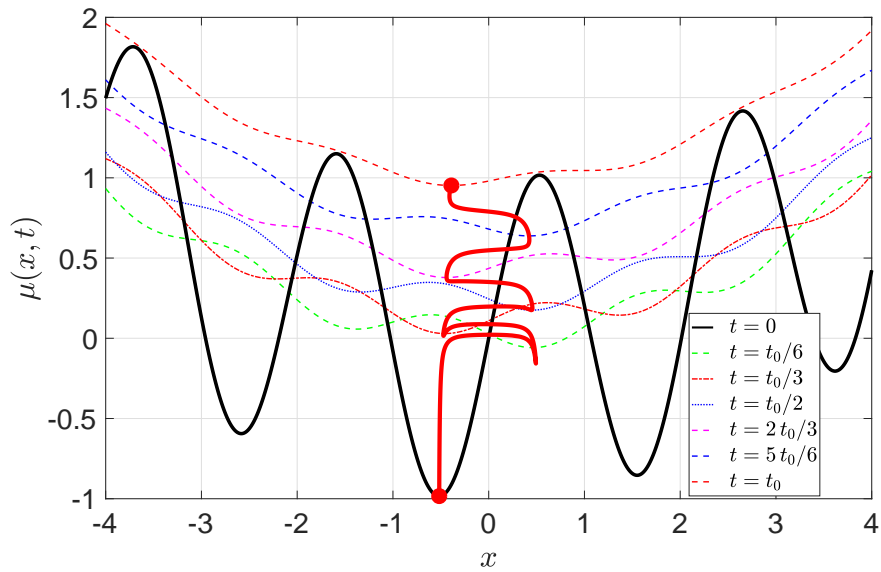} \\[3mm]
(a) Steklov regularization -- contours of $t$ \\ with $t_0 =  7$.
\end{center}
\ 
\begin{center}
\includegraphics[width=130mm]{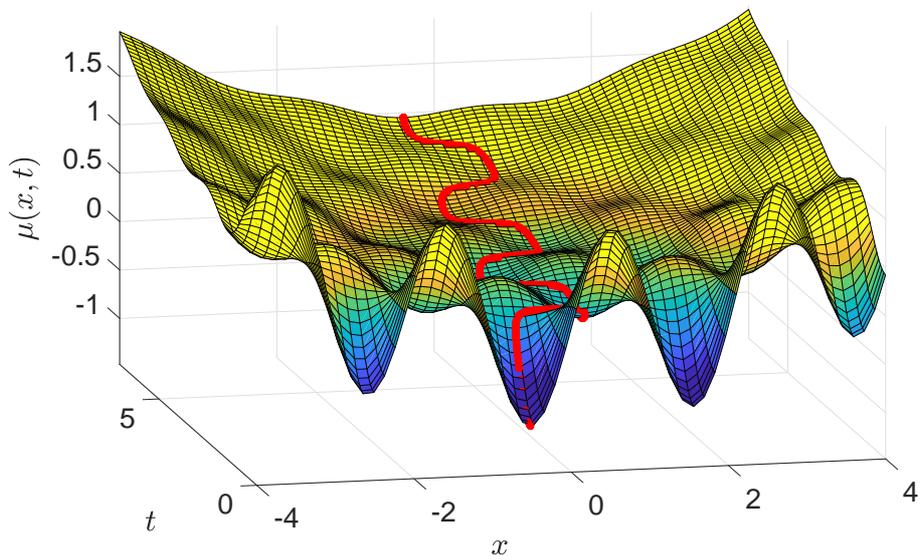} \\[3mm]
(b) Steklov regularization -- surface \\ with $t_0 =  7$.
\end{center}
\caption{\sf Algorithm~\ref{algo1} for the nonpolynomial function, $f(x)=0.06\,x^2 + \sin3x$.} 
\label{fig:nonpoly}
\end{figure}

In Sections~\ref{ex1}--\ref{stats}, we have tested the performance of Algorithms~\ref{algo1} and \ref{algo3} for polynomial functions.  In this section we consider the nonpolynomial coercive function
\[
f(x)=0.06\,x^2 + \sin3x\,,
\]
which has many local minima.  Even though this function does not satisfy the assumptions of Theorem \ref{convexity}, it is straightforward to check that the conclusion of this theorem holds for this function. To do this, we use \eqref{muxx} and some elementary algebra to derive
\[
\mu_{xx}(x,t)=0.12-3\sin3x\,\dfrac{\sin3t}{t}.
\]
Since $\lim_{t\to \infty} (\sin3t)/t=0$, we have that for every $x$ there exists a $t_0>0$ (independent of $x$), such that $\mu_{xx}(x,t)>0$ for all $t>t_0$. Thus, for those values of $t$, $\mu(\cdot,t)$ is strictly convex.  In our experiments, however, we use a $t_0$ that makes $\mu(\cdot,t_0)$ quasi-convex. Namely, with the parameter value of $t_0=7$, the Steklov function becomes quasi-convex, and its minimizer is $x_0 = -0.3896$. This minimizer is in turn used to start solving the initial value problem.  Algorithm~\ref{algo1} then finds the global minimizer as $x(0) = -0.5167$. Figures~\ref{fig:nonpoly}(a) and \ref{fig:nonpoly}(b) illustrate the $t$-contour and surface plots, respectively, as well as the trajectory constructed by Algorithm~\ref{algo1}, in each of parts (a) and (b).

\clearpage

\newpage

\section{Conclusion}

We have proposed a trajectory-based algorithm, Algorithm~\ref{algo1}, using the Steklov regularization function $\mu(x,t)$, for finding a global minimizer of univariate coercive functions. The so-called Steklov smoothing function has been previously studied in the literature, as a smoothing tool for small values of $t$. Our study considers using this function as a regularization tool. Namely,  we have proved that, for large enough $t$, $\mu$ convexifies certain univariate coercive functions.  We proved convergence of Algorithm~\ref{algo1} for quartic polynomials.  We tested it for higher-degree polynomials, as well as a non-polynomial function for illustration.  

We have made comparisons with an existing trajectory-based algorithm, reformulated here as Algorithm~\ref{algo3}, which uses a quadratic regularization instead.  Using 1000 randomly generated polynomials, we found that, for degree-6 polynomials, while the failure rate of Algorithm~\ref{algo1} is only 1\%, Algorithm~\ref{algo3} fails in 63\% of the cases.  For degree-20 polynomials, these percentages are 7 and 96, respectively, pointing to the fact that Algorithm~\ref{algo1} provides a better option.

Throughout the paper, we obtained auxiliary results (apart from convergence) regarding Algorithm~\ref{algo1}, the Steklov function, and quartic polynomials, which are worthy in their on right.

In Algorithm~\ref{algo1}, we require $t_0$ to be chosen so as to convexify $\mu$; however, one may instead require $t_0$ to {\em quasi-convexify} $\mu$, which would possibly result in an even smaller, i.e., a more desirable, $t_0$.  One should note that most of the powerful numerical methods for minimization of convex functions are also applicable to minimization of quasi-convex functions \cite{BazSheShe}.

As with any other global optimization technique, Algorithm~\ref{algo1} cannot find a global optimizer in every single situation.  However, it provides a promising and viable option for searching global minimizers of general univariate coercive functions.  A future line of investigation should concern extensions of Algorithm~\ref{algo1} to multi-variable coercive functions, which clearly has a much wider scope for theory and applications.


\begin{thebibliography}{30}

\bibitem{AriBurKay2015}
{\sc O. Ar{\i}kan, R. S. Burachik and C. Y. Kaya},
``Backward differential flow'' may not converge to a global minimizer of polynomials. 
{\em J. Optim. Theory Applic.}, {\bf 167}, 401--408, 2015. 

\bibitem{Arnold1978}
 {\sc V. I. Arnold},
{\em Ordinary Differential Equations}. 
The MIT Press, Cambridge, 1978.

\bibitem{AttChbPeypRed2018}
{\sc H, Attouch, Z. Chbani, J. Peypouquet, and P. Redont},
Fast convergence of inertial dynamics and algorithms with asymptotic vanishing viscosity.
{\em Math. Program.}, {\bf 168}(1-2), 123--175, 2018.

\bibitem{BazSheShe}
{\sc M. S. Bazaraa, H. D. Sherali and C. M. Shetti},
{\em Nonlinear Programming: Theory and Algorithms, 3rd edition}. 
Wiley InterScience, New Jersey, 2006.

\bibitem{BotCse2018}
{\sc R. I. Bo{\c t} and E. R. Csetnek},
Convergence rates for forward--backward dynamical systems associated with strongly monotone inclusions.
{\em J. Math. Anal. Applic.}, {\bf 457}(2), 1135--1152, 2018.

\bibitem{Chen2012}
{\sc X. Chen},
Smoothing methods for nonsmooth, nonconvex minimization.
{\em Math. Program., Ser. B}, {\bf 134}, 71--99, 2012.

\bibitem{ErmNorWet1995}
{\sc Y. M. Ermoliev, V. I.  Norkin, and R. J.-B. Wets},
The minimization of semicontinuous functions: mollifier subgradients. 
{\em SIAM J. Control Optim.}, {\bf 32}, 149--167, 1995.

\bibitem{GarVic2013}
{\sc R. Garmanjani, L. N. Vicente},
Smoothing and worst-case complexity for direct-search methods in nonsmooth optimization. 
{\em IMA J. Num. Anal.}, {\bf 33}, 1008--1028, 2013.

\bibitem{Gupal1977}
{\sc A. M. Gupal},
On a method for the minimization of almost-differentiable functions. 
{\em Cybernet. Syst. Anal.}, {\bf 13}, 115--117, 1977.

\bibitem{HorTuy1996}
{\sc R. Horst and H. Tuy},
{\em Global Optimization: Deterministic Approaches}.
Springer-Verlag, Berlin, Heidelberg, Germany, 1996.


\bibitem{LerSer2013}
{\sc D. Lera and Y. D. Sergeyev},
Acceleration of univariate global optimization algorithms working with Lipschitz functions and Lipschitz first derivatives.
{\em SIAM J. Optim.}, {\bf 23}(1), 508--529, 2013.


\bibitem{RocWet2004}
{\sc R. T. Rockafellar and R. J.-B. Wets},
{\em Variational Analysis}.  
Springer-Verlag, Berlin, Heidelberg, Germany, 2004.

\bibitem{Scholz2012}
{\sc D. Scholz},
{\em Deterministic Global Optimization: Geometric Branch-and-bound Methods and Their Applications}.
Springer, New York, 2012.


\bibitem{SnyKok2009}
{\sc J. A. Snyman and S. Kok},
A reassessment of the Snyman--Fatti dynamic search trajectory method for unconstrained global optimization.
{\em J. Glob. Optim.}, {\bf 43}, 67--82, 2009.

\bibitem{SW}
{\sc J. Stoer and C. Witzgall},
{\em Convexity and Optimization in Finite Dimensions I}.
Springer-Verlag, Berlin-Heidelberg, 1970.
%

\bibitem{ZhaXio2009}
{\sc X. Zhang and Y. Xiong},
Impulse noise removal using directional difference based noise detector and adaptive weighted mean filter.
{\em IEEE Signal Proc. Lett.}, {\bf 16}, 295--298, 2009.

\bibitem{ZhuZhaLiu2014}
{\sc J. Zhu, S. Zhao, and G. Liu},
Solution to global minimization of polynomials by backward differential flow.
{\em J. Optim. Theory Applic.}, {\bf 161}, 828--836, 2014.

\end{thebibliography}
\end{document}